\gdef\eeaa#1pt{#1}}      %
\def\accentadjtext#1{\setbox0\hbox{$#1$}\kern   %
                \expandafter\eeaa\the\fontdimen1\textfont1 \ht0 }
\def\accentadjscript#1{\setbox0\hbox{$#1$}\kern %
                \expandafter\eeaa\the\fontdimen1\scriptfont1 \ht0 }
\def\accentadjscriptscript#1{\setbox0\hbox{$#1$}\kern   %
                \expandafter\eeaa\the\fontdimen1\scriptscriptfont1 \ht0 }
\def\accentadjtextback#1{\setbox0\hbox{$#1$}\kern       %
                -\expandafter\eeaa\the\fontdimen1\textfont1 \ht0 }
\def\accentadjscriptback#1{\setbox0\hbox{$#1$}\kern     %
                -\expandafter\eeaa\the\fontdimen1\scriptfont1 \ht0 }
\def\accentadjscriptscriptback#1{\setbox0\hbox{$#1$}\kern %
                -\expandafter\eeaa\the\fontdimen1\scriptscriptfont1 \ht0 }
\def\itoverline#1{{\mathsurround0pt\mathchoice
        {\rlap{$\accentadjtext{\displaystyle #1}
                \accentadjtext{\vrule height1.593pt}
                \overline{\phantom{\displaystyle #1}
                \accentadjtextback{\displaystyle #1}}$}{#1}}
        {\rlap{$\accentadjtext{\textstyle #1}
                \accentadjtext{\vrule height1.593pt}
                \overline{\phantom{\textstyle #1}
                \accentadjtextback{\textstyle #1}}$}{#1}}
        {\rlap{$\accentadjscript{\scriptstyle #1}
                \accentadjscript{\vrule height1.593pt}
                \overline{\phantom{\scriptstyle #1}
                \accentadjscriptback{\scriptstyle #1}}$}{#1}}
        {\rlap{$\accentadjscriptscript{\scriptscriptstyle #1}
                \accentadjscriptscript{\vrule height1.593pt}
                \overline{\phantom{\scriptscriptstyle #1}
                \accentadjscriptscriptback{\scriptscriptstyle #1}}$}{#1}}}}
\newcommand{\iol}{\itoverline}
\newcommand{\ch}[1]{{\mbox{\raise 1pt\hbox{\large$\chi$}}}_{\lower 1pt\hbox{$\scriptstyle #1$}}}
\def\1{\raisebox{2pt}{\rm{$\chi$}}}
\newtheorem{theorem}{Theorem}[section]
\newtheorem{corollary}[theorem]{Corollary}
\newtheorem{lemma}[theorem]{Lemma}
\theoremstyle{definition}
\newtheorem{definition}[theorem]{Definition}
\newtheorem{example}[theorem]{Example}
\DeclareFontFamily{U}{mathx}{}
\DeclareFontShape{U}{mathx}{m}{n}{<-> mathx10}{}
\DeclareSymbolFont{mathx}{U}{mathx}{m}{n}
\DeclareMathAccent{\widehat}{0}{mathx}{"70}
\DeclareMathAccent{\widecheck}{0}{mathx}{"71}
\newcommand{\R}{{\mathbb R}}
\newcommand{\N}{{\mathbb N}}
\newcommand{\Z}{{\mathbb Z}}
\newcommand{\Ha}{{\mathcal H}}
\newcommand\cp{\operatorname{cap}}
\newcommand\Cp{\operatorname{Cap}}
\newcommand{\capm}[3]{\cp_{\dot{M}^{#1}_{#2,#3}}}
\newcommand\diam{\operatorname{diam}}
\DeclareMathOperator{\Hop}{\mathcal{H}}
\DeclareMathOperator{\Lop}{{\mathcal{Q}}}
\def\1{\raisebox{2pt}{\rm{$\chi$}}}
\def\vint_#1{\mathchoice%
        {\mathop{\kern 0.2em\vrule width 0.6em height 0.69678ex depth -0.58065ex
                \kern -0.8em \intop}\nolimits_{\kern -0.4em#1}}%
        {\mathop{\kern 0.1em\vrule width 0.5em height 0.69678ex depth -0.60387ex
                \kern -0.6em \intop}\nolimits_{#1}}%
        {\mathop{\kern 0.1em\vrule width 0.5em height 0.69678ex
            depth -0.60387ex
                \kern -0.6em \intop}\nolimits_{#1}}%
        {\mathop{\kern 0.1em\vrule width 0.5em height 0.69678ex depth -0.60387ex
                \kern -0.6em \intop}\nolimits_{#1}}}
\def\vintslides_#1{\mathchoice%
        {\mathop{\kern 0.1em\vrule width 0.5em height 0.697ex depth -0.581ex
                \kern -0.6em \intop}\nolimits_{\kern -0.4em#1}}%
        {\mathop{\kern 0.1em\vrule width 0.3em height 0.697ex depth -0.604ex
                \kern -0.4em \intop}\nolimits_{#1}}%
        {\mathop{\kern 0.1em\vrule width 0.3em height 0.697ex depth -0.604ex
                \kern -0.4em \intop}\nolimits_{#1}}%
        {\mathop{\kern 0.1em\vrule width 0.3em height 0.697ex depth -0.604ex
                \kern -0.4em \intop}\nolimits_{#1}}}
\newcommand{\intav}{\vint}
\newcommand{\dist}{\operatorname{dist}}
\title[Triebel--Lizorkin capacities in metric spaces]{Classifying  Triebel--Lizorkin capacities in metric spaces}
\author[J. Lehrb\"ack]{Juha Lehrb\"ack}   %
\address[J.L.]{Department of Mathematics and Statistics, P.O. Box 35, FI-40014 University of Jyvaskyla, Finland}
\email{juha.lehrback@jyu.fi}
\author[K. Mohanta]{Kaushik Mohanta}   %
\address[K.M.]{Department of Mathematics and Statistics, P.O. Box 35, FI-40014 University of Jyvaskyla, Finland}
\email{kaushik.k.mohanta@jyu.fi}
\author[A. V. V\"ah\"akangas]{Antti V. V\"ah\"akangas}
\address[A.V.V.]{Department of Mathematics and Statistics, P.O. Box 35, FI-40014 University of Jyvaskyla, Finland}
 \email{antti.vahakangas@iki.fi}
\keywords{Comparison of capacities, capacity density condition, Haj{\l}asz--Triebel--Lizorkin space, 
Riesz capacity,
metric measure space}
\subjclass[2020]{
	31C15   %
	(28A12, %
	31E05)}  %
\begin{document}

\begin{abstract} 
We study non-local  or fractional capacities in metric measure spaces.
Our main goal is to clarify the relations between 
relative Haj{\l}asz--Triebel--Lizorkin capacities,
potentional Triebel--Lizorkin capacities, and 
metric space variants of Riesz capacities.
As an application of our results, we obtain a characterization of a 
Haj{\l}asz--Triebel--Lizorkin capacity density condition,
which is based on an earlier characterization of a 
Riesz capacity density condition in terms of
Hausdorff contents.
\end{abstract}

\maketitle

\section{Introduction}

In mathematical analysis, there are several possible approaches to the concept of 
\emph{capacity of a set}, depending, for instance, on the
general setting and the intended applications. Nevertheless, it often turns out that 
seemingly different definitions still lead to comparable or
otherwise closely related concepts.

In this work, we examine and compare four different
approaches to nonlocal or fractional capacities in a
metric space $X$ equipped with a doubling measure $\mu$.
More precisely, when
$B\subset X$ is an open ball and $E\subset\iol B$ is a closed set, 
we consider the following, 
for parameters $1<p<\infty$, $1\le q \le\infty$, $0<\beta< 1$, and $\Lambda\ge 2$:
\begin{itemize}
\item the relative Haj{\l}asz--Triebel--Lizorkin $(\beta,p,q)$-capacity $\capm{\beta}{p}{q}(E,2B,\Lambda B)$,
\item the potentional Triebel--Lizorkin  $(\beta,p,q)$-capacity $\Cp^{\beta}_{p,q}(E)$, 
\item the dual characterization of $\Cp^{\beta}_{p,q}(E)$ in terms of measures supported on $E$, 
\item the Riesz $(\beta,p)$-capacity $R_{\beta,p}(E)$.
\end{itemize}
The main purpose of this paper is to establish and clarify the relations between these
concepts. For instance, we examine when these capacities are comparable  and 
show that often the parameter $q$ plays no essential role.
 The last part of the paper, where
we characterize a capacity density condition associated to Haj{\l}asz--Triebel--Lizorkin capacities,
can be viewed as a continuation of \cite{CILV,MR4478471}. 

The Haj{\l}asz--Triebel--Lizorkin function spaces  $\dot{M}^\beta_{p,q}$  and the corresponding relative capacities $\capm{\beta}{p}{q}$ 
are defined using fractional Haj{\l}asz gradients. 
In Section~\ref{s.frac_haj} we introduce these gradients, following~\cite{MR2764899},
and recall in Lemma~\ref{lemma: Poincare} a Poincar\'e type inequality for them.
Haj{\l}asz--Triebel--Lizorkin spaces and relative capacities $\capm{\beta}{p}{q}$ are
defined in Section~\ref{s.tl_space}, where we also
prove a lower bound estimate for these capacities in terms of  suitable Hausdorff contents; 
see Theorem~\ref{t.HC_bp}.

On the other hand, 
following~\cite{MR1097178,MR1411441},
the Triebel--Lizorkin potentional capacity $\Cp^{\beta}_{p,q}(E)$ is 
defined in Section~\ref{s.pot_cap} using the potential operator $\Hop_\beta$,
\[
\Hop_\beta f(x)= \sum_{n=n_0}^\infty  2^{-\beta n}\int_{X} \frac{\mathbf{1}_{B(y,2^{-n})}(x)}{\mu(B(y,2^{-n}))} f_n(y)\,d\mu(y)\,,
\] 
for a sequence $f=(f_n)_{n\ge n_0}$ of nonnegative Borel functions in $X$.
Here $n_0$ is the smallest integer
such that $2^{-n_0}\le 2\diam(X)$, with the interpretation that $n_0=-\infty$ if $X$ is unbounded.
 
We show that, under suitable assumptions on the space $X$ 
and the parameters
$p$, $q$, $\beta$ and $\Lambda$, 
the capacities 
$\capm{\beta}{p}{q}(E,2B,\Lambda B)$ and $\Cp^{\beta}_{p,q}(E)$ are comparable
for all compact sets $E\subset \iol{B}$.
In particular, besides the doubling condition (see~\eqref{e.doubling}),
we need to assume that the measure $\mu$ is also reverse 
doubling (see~\eqref{e.rev_dbl_decay} and \eqref{e.reverse_doubling}), 
as follows.

\begin{theorem}\label{t.equiv_intro}
Let $1<p<\infty$, $1<q\le\infty$, $0<\beta <1$ and $\Lambda\ge  41$.
Assume that the measure $\mu$ is doubling and reverse doubling,  
and satisfies the quantitative reverse doubling condition~\eqref{e.reverse_doubling}  
for some exponent $\sigma>\beta p$.
Let $B=B(x,r)\subset X$ be a ball 
with $x\in X$ and $0<r<\frac{1}{80}\diam(X)$ 
and let $E\subset \iol{B}$ be a compact set.
Then 
\begin{equation}\label{e.equiv_intro}
C^{-1} \Cp^\beta_{p,q}(E) \le \capm{\beta}{p}{q}(E, 2B,\Lambda B)\le  C \Cp^\beta_{p,q}(E)\,,
\end{equation}
where the constant $C\ge 1$ is independent of $B$ and $E$.
\end{theorem}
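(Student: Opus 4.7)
The plan is to establish the two inequalities in \eqref{e.equiv_intro} separately, using the potential operator $\Hop_\beta$ as the bridge between the two capacities. For the upper bound $\capm{\beta}{p}{q}(E, 2B, \Lambda B) \le C\,\Cp^\beta_{p,q}(E)$, I would start from a nonnegative sequence $f = (f_n)_{n \ge n_0}$ admissible for $\Cp^\beta_{p,q}(E)$, so that $\Hop_\beta f \ge 1$ on $E$ and $\|f\|_{L^p(X,\ell^q)}^p$ nearly realizes the infimum. Letting $\eta$ be a $(C/r)$-Lipschitz cutoff with $\eta \equiv 1$ on $\tfrac{3}{2}B$ and $\spt\eta \subset 2B$, the candidate test function for the relative capacity is $u = \eta \cdot \min(\Hop_\beta f,1)$, which is supported in $2B$ and equals $1$ on a neighborhood of $E \subset \iol{B}$. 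The task is then to produce a fractional $(\beta,p,q)$-Haj\l{}asz gradient for $u$, supported in $\Lambda B$, with $\ell^q$-$L^p$ norm bounded by $C\|f\|_{L^p(\ell^q)}$. The natural decomposition splits this gradient into a gradient for $\Hop_\beta f$ itself (controlled scale by scale in terms of $(f_n)$, since $\Hop_\beta f$ is assembled from $2^{-n}$-scale averages of $f_n$) and a boundary term coming from $\eta$ acting on the bounded function $\min(\Hop_\beta f,1)$, relevant only at scales $2^{-n}\lesssim r$.

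For the lower bound $\Cp^\beta_{p,q}(E) \le C\,\capm{\beta}{p}{q}(E, 2B, \Lambda B)$, I would argue via a pointwise potential estimate. Given $u$ admissible for the relative capacity with fractional gradient $(g_n)$ supported in $\Lambda B$, the goal is to prove
\begin{equation*}
1\le u(x) \le C\,\Hop_\beta \widetilde g(x)\quad\text{for all } x\in E,
\end{equation*}
where $\widetilde g_n$ is a mild dilation of $g_n$ with comparable $L^p(\ell^q)$ norm; the capacity inequality is then immediate from the definition of $\Cp^\beta_{p,q}$. Setting $B_n=B(x,2^{-n})$, Lebesgue differentiation gives $u_{B_n}\to u(x)\ge 1$ as $n\to\infty$, while for the smallest admissible $n$ (corresponding to the scale of $2B$ or to $n_0$) the reverse doubling hypothesis with $\sigma>\beta p$ combined with $\spt u\subset 2B$ forces $u_{B_n}$ to be negligible. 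Telescoping $|u_{B_n}-u_{B_{n+1}}|$ and applying the Poincar\'e inequality of Lemma~\ref{lemma: Poincare} on each scale bounds each term by $C\,2^{-\beta n}(\widetilde g_n)_{CB_n}$, which is exactly the form of the summand in $\Hop_\beta \widetilde g(x)$.

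The main technical obstacle is the upper bound, where the construction of a valid $(\beta,p,q)$-Haj\l{}asz gradient for the truncated, cut-off potential $u$ requires genuine work. Two issues deserve care: first, controlling the gradient of $\Hop_\beta f$ at a fixed scale $2^{-n}$ in $\ell^q$ involves decomposing differences $|\Hop_\beta f(x)-\Hop_\beta f(y)|$ across dyadic levels, so that contributions from scales much larger or much smaller than $2^{-n}$ are separated and the $\ell^q$-summability is preserved; second, the boundary term contributed by $\eta$ is of order $1/r$ on an annulus around $\partial(2B)$, and showing that its $L^p(\ell^q)$ norm is absorbed by $\|f\|_{L^p(\ell^q)}$ relies on a capacity lower bound for the ball $B$ of the form $r^{-\beta p}\mu(B)$, which in turn uses precisely the quantitative reverse doubling with exponent $\sigma>\beta p$. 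The dilation requirement $\Lambda\ge 41$ is forced by the maximal dilation of balls appearing in the chaining argument of the lower bound and in the Poincar\'e estimate; the lower bound, by contrast, is a rather direct consequence of telescoping once the decay of $u_{B_n}$ at coarse scales is justified.
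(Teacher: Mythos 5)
Your lower bound argument matches the paper's (Theorem~\ref{t.comparison_dc}): take an admissible $\varphi$ with fractional gradient $(g_k)$, fix $x\in E$, telescope the averages $\varphi_{B(x,2^{-n})}$ down from scale $r$, and control each increment via Lemma~\ref{lemma: Poincare} to obtain $\Hop_\beta(Cf)(x)\ge 1$ with $f_n=\mathbf{1}_{\Lambda B}\sum_{k=n-1}^{n+2}g_k$. One small inaccuracy: you attribute the smallness of the coarse-scale average $\varphi_{B_0}$ to the quantitative reverse doubling with $\sigma>\beta p$, whereas only the weaker decay condition~\eqref{e.rev_dbl_decay} together with $0<r<\tfrac{1}{80}\diam(X)$ is what is used (the stronger hypothesis $\sigma>\beta p$ is needed only for the other inequality). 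This part of your proposal is nevertheless sound.

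Your upper bound route has a genuine gap. You propose to take $u=\eta\cdot\min(\Hop_\beta f,1)$ and build a fractional Haj\l{}asz gradient for $\Hop_\beta f$ directly by ``decomposing differences across dyadic levels.'' The obstruction is that the kernel of $\Hop_\beta$ at scale $n$ is $\mathbf{1}_{B(z,2^{-n})}(\cdot)/\mu(B(z,2^{-n}))$, a characteristic function of a ball. At coarse scales $n<k$ (where $2^{-k}\approx d(x,y)$), the difference $\mathbf{1}_{B(z,2^{-n})}(x)-\mathbf{1}_{B(z,2^{-n})}(y)$ lives on a symmetric-difference shell whose $\mu$-measure need not be small compared to $\mu(B(z,2^{-n}))$ in a general doubling space; there is no $d(x,y)/2^{-n}$ improvement, so the expected gain of a factor $2^{(\beta-1)(k-n)}$ fails and $\ell^q$-summability over $k$ breaks down. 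This is precisely why the paper does \emph{not} work with $\Hop_\beta$ directly but instead introduces the discrete potential $\Lop_\beta$ (Definition~\ref{d.dpotential}), built from $2^nL$-Lipschitz bump functions $\psi_{n,i}$; for $\Lop_\beta$ the needed difference estimate at coarse scales is immediate from Lipschitz continuity, giving the mixed-norm bound $|\Lop_\beta f|_{\dot M^\beta_{p,q}(X)}\le C\|f\|_{L^p(X;l^q)}$ of Lemma~\ref{l.pot_norm_est} (which the authors explicitly note does not seem available for $\Hop_\beta$). The upper bound is then completed via Lemma~\ref{l.disc_est} ($\Hop_\beta f\le C\Lop_\beta f$ pointwise), the Leibniz rule Lemma~\ref{LemWithLipForTL}, and the Sobolev estimate Lemma~\ref{l.pot_poincare}; your remark about absorbing the boundary term by a ``capacity lower bound for the ball'' should instead refer to this Sobolev-type inequality, which uses the hypothesis $\sigma>\beta p$. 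Without a smoothed substitute for $\Hop_\beta$ or an equivalent device, your outlined upper bound cannot be carried out.
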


The two inequalities in~\eqref{e.equiv_intro}
hold separately under slightly different assumptions, 
see Theorems~\ref{t.equiv_bp} and~\ref{t.comparison_dc} for 
precise formulations.
Also recall, as a basic example, that in $\R^n$  (equipped with the 
Euclidean distance) the Lebesgue measure is both doubling and reverse doubling and 
satisfies  condition~\eqref{e.reverse_doubling} with $\sigma=n$.

As a technical tool for the proof of the second inequality in~\eqref{e.equiv_intro},
we introduce a new discrete potential operator $\Lop_\beta$, 
which majorizes the operator $\Hop_\beta$; see Lemma~\ref{l.disc_est}. 
The definition of $\Lop_\beta$ (Definition~\ref{d.dpotential})
involves a partition of unity and
is inspired by the so-called discrete convolutions, see 
e.g.~\cite{MR2747071,MR1954868}. 
Since the functions in the partition of unity are
Lipschitz-continuous, the operator $\Lop_\beta$ can be seen as a
smooth version of $\Hop_\beta$, whose definition 
uses noncontinuous characteristic functions of balls.
Boundedness properties of $\Lop_\beta$ 
are studied in Section~\ref{s.discr_pot},
and in particular we obtain  for $\Lop_\beta$ a mixed norm estimate in 
Lemma~\ref{l.pot_norm_est} and a 
Sobolev type inequality in Lemma~\ref{l.pot_poincare}.

In Section~\ref{s.Riesz_capacity}, we prove a dual characterization of the
Triebel--Lizorkin potentional capacity $\Cp^{\beta}_{p,q}(E)$
for compact sets $E\subset X$.
As in the Euclidean case in~\cite{MR1411441},
this is based on the use of
(nonnegative) Radon measures $\nu$ supported on $E$.
In our setting, the sequence
\begin{equation*}%
\widecheck{\mathcal{H}}_\beta\nu(x)=\left(  2^{-\beta n} \frac{\nu(B(x,2^{-n}))}{\mu(B(x,2^{-n}))} \right)_{n=n_0}^\infty\,,\qquad x\in X\,,
\end{equation*}
plays an important role.
The main tool in the proof of the dual characterization (Theorem~\ref{t.n_cap_dual}) is a Minimax theorem, see
Lemma~\ref{l.m_assumptions}. 

In Section~\ref{s.MW} we prove a version of the
Muckenhoupt--Wheeden theorem  (Theorem~\ref{t.M-W}), which gives an estimate for the 
norm of $\widecheck{\mathcal{H}}_\beta\nu$
in terms of the fractional maximal function
$$
M_\beta\nu(x)=\sup_{0<r\le 2^{-n_0}} r^\beta \frac{\nu(B(x,r))}{\mu(B(x,r))}\,,\qquad x\in X\,.
$$
Such results originate in \cite{MR340523}, and a very general metric space variant 
of the Muckenhoupt--Wheeden theorem can be found in~\cite{MR1962949}.
Nevertheless, instead of relying on the more abstract machinery from~\cite{MR1962949}, we present an alternative
self-contained proof that is suited for our purposes.

The dual characterization of $\Cp^{\beta}_{p,q}$
and the Muckenhoupt--Wheeden theorem are crucial ingredients in the proof of Theorem~\ref{t.cnr_ML}, 
which shows that for compact sets  the Triebel--Lizorkin capacities $\Cp^{\beta}_{p,q}$ are in fact 
independent of the $q$-parameter, again under a suitable reverse doubling assumption.
Consequently, by Theorem~\ref{t.equiv_intro} the same independence holds for the relative capacity
$\capm{\beta}{p}{q}$, as well.

Finally, in Sections~\ref{s.comparison} and~\ref{s.density}
we compare the Triebel--Lizorkin capacities and the Riesz capacities $R_{\beta,p}(E)$,
which are defined in terms of a metric space variant of the classical Riesz potential, 
\begin{equation}\label{e.riesz_intro}
I_\beta f(x) %
=\int_{X\setminus \{x\}}\frac{d(x,y)^\beta}{\mu(B(x,d(x,y)))}f(y)\,d\mu(y)\,,
\end{equation}
for Borel functions $f\ge 0$ in $X$.
See Definitions~\ref{d.RieszPot} and~\ref{d.RieszCap} for more details.

In Theorem~\ref{t.infinity} we show that
$\Cp^\beta_{p,\infty}$ and $R_{\beta,p}$ are comparable. 
The proof of this theorem follows by rather straightforward 
applications of the doubling and
quantitative reverse doubling conditions, see Lemma~\ref{l.riesz_equiv}.
The $q$-independence of the
potentional Triebel--Lizorkin capacities yields  
for compact sets the same comparability 
also for $\Cp^\beta_{p,q}$,
for every $1<q\le \infty$,
and thus for the relative capacity $\capm{\beta}{p}{q}$ as well, by 
Theorem~\ref{t.equiv_intro}.
In Theorem~\ref{t.cnr} we present an alternative direct proof,
again based on the dual characterization, for the fact that
for compact sets
$\Cp^\beta_{p,q}$ is bounded from above by $R_{\beta,p}$, for all  
$1<q\le \infty$.

The main application of the above results is given in Section~\ref{s.density}, where we
characterize, for $0<\beta<1$, a Haj{\l}asz--Triebel--Lizorkin capacity density condition
in complete geodesic metric spaces.
The proof of the characterization is based on the
following facts:
\begin{enumerate}
\item The Riesz capacity $R_{\beta,p}$ dominates the corresponding  
Haj{\l}asz--Triebel--Lizorkin capacity $\capm{\beta}{p}{q}$, for every $1<q\le\infty$;
see Corollary~\ref{c.equiv_bp_SEC}.
\item The Riesz capacity density condition is equivalent to 
a Hausdorff content density condition. This is a deep and highly nontrivial result 
from~\cite[Theorem~6.4]{CILV} and~\cite[Theorem~9.5]{MR4478471},
and in particular implies a self-improvement result for the Riesz capacity density condition.
Here the additional assumptions that the space $X$ is complete and geodesic are needed.
\item The Haj{\l}asz--Triebel--Lizorkin capacity $\capm{\beta}{p}{q}$ can be estimated from below by suitable
Hausdorff contents, by Theorem~\ref{t.HC_bp}.
\end{enumerate}
In addition to these main ingredients, we also need estimates for the 
capacities and contents of balls. For Haj{\l}asz--Triebel--Lizorkin capacities
such estimates are obtained in Theorem~\ref{t.cap_balls}. 

The above characterization leads to a self-improvement result (Corollary \ref{c.riesz-improvement}) for the 
Haj{\l}asz--Triebel--Lizorkin capacity density condition, which is,
to the best of our knowledge, new even in the context of
Euclidean spaces.
Self-improvement of the capacity density condition for the Riesz capacity in $\R^n$
was originally proven by Lewis~\cite{MR946438}. See also \cite{CILV,MR4478471}
for corresponding results for other capacities in metric spaces.

Before going to the main parts of the paper, we begin in Section~\ref{s.prelim}
by introducing notation and recalling some useful concepts that are
needed later on.

\subsection*{Acknowledgements} The authors would like to thank Lizaveta Ihnatsyeva 
for interesting discussions on the topics of the paper.
The second author was supported by the Academy of Finland (project \#323960) and by the Academy of Finland via Centre of Excellence in Analysis and Dynamics Research (project \#346310).

\section{Preliminaries}\label{s.prelim}

We write $\N=\{1,2,3,\ldots\}$
and $\N_0=\{0,1,2,\ldots\}$.
We also set $p'=\frac{p}{p-1}$ if $1<p<\infty$, $p'=1$ if $p=\infty$, 
and $p'=\infty$ if $p=1$.

Throughout the rest of the paper we assume that $X=(X,d,\mu)$ is a metric 
measure space equipped with a metric $d$ and a 
nonnegative complete Borel measure $\mu$ such that $0<\mu(B)<\infty$
for all open balls $B\subset X$, 
each of which is an open set of the form 
\[
B=B(x,r)=\{y\in X\,:\, d(y,x)<r\}
\] 
with $x\in X$ and $r>0$.
The corresponding closed ball is
\[
\iol{B}(x,r)=\{y\in X\,:\, d(y,x)\le r\}.
\] 
Under these assumptions,
the space $X$ is separable,
see \cite[Proposition~1.6]{MR2867756}.
We also assume that $\# X\ge 2$.

The diameter of a set $A\subset X$ is denoted by $\diam(A)$. 
We write $n_0=-\infty$ if $\diam(X)=\infty$, and otherwise choose $n_0$ 
to be the smallest integer
such that $2^{-n_0}\le 2\diam(X)$. In the latter
case, we have $\diam(X)<2^{-n_0}\le 2\diam(X)$. 

We also assume throughout the paper
that the measure $\mu$ is {\em doubling}, that is,
there is a constant $c_\mu> 1$, called
the {\em doubling constant of $\mu$}, such that
\begin{equation}\label{e.doubling}
\mu(2B) \le c_\mu\, \mu(B)
\end{equation}
for all balls $B=B(x,r)$ in $X$. 
Here we use for $0<t<\infty$ the notation $tB=B(x,tr)$. 

If $X$ is connected,  
then the doubling measure $\mu$ also satisfies
the {\em reverse doubling condition}, that is,
there is a constant  $0<c_R=C(c_\mu)<1$ such that
\begin{equation}\label{e.rev_dbl_decay}
\mu(B(x,r/2))\le c_R\, \mu(B(x,r))
\end{equation}
for all $x\in X$ and all $0<r<\diam(X)/2$;
see for instance~\cite[Lemma~3.7]{MR2867756}.
Iteration of~\eqref{e.rev_dbl_decay} shows that
if $X$ is connected, then there exist
an exponent $\sigma>0$ and a constant $c_\sigma>0$,
both depending on $c_\mu$ only, such that the {\em quantitative 
reverse doubling condition}
\begin{equation}\label{e.reverse_doubling}
 \frac{\mu(B(x,r))}{\mu(B(x,R))} \le c_\sigma\Bigl(\frac{r}{R}\Bigr)^\sigma
\end{equation}
holds for all $x\in X$ and all $0<r<R\le 2\diam(X)$.
However, in most of our results we will not assume $X$ to be connected,
but instead we often explicitly assume that \eqref{e.reverse_doubling}
holds for some exponent $\sigma>0$.

We denote the closure of a set $A\subset X$ by $\iol{A}$. In particular, if $B=B(x,r)\subset X$ is a ball,
then the notation $\iol{B}=\iol{B(x,r)}$ refers to the closure of the ball $B$.  
The characteristic function of a set $A\subset X$ is denoted by $\mathbf{1}_{A}$; 
that is, $\mathbf{1}_{A}(x)=1$ if $x\in A$
and $\mathbf{1}_{A}(x)=0$ if $x\in X\setminus A$.

When $A\subset X$ is a measurable set 
with $0<\mu(A)<\infty$,
we write
\[
f_A=\vint_{A} f(y)\,d\mu(y)=\frac{1}{\mu(A)}\int_A f(y)\,d\mu(y)
\]
for the integral average of $f\in L^1(A)$ over $A$.
 If $f:X\to \R$ is a measurable function, then the {\em noncentered Hardy--Littlewood
 maximal function} $M^*f$ of $f$ is defined by
\begin{equation}\label{e.max_funct_def_noncentered}
M^*f(x)=\sup_{B}\vint_{B} \lvert f(y)\rvert\,d\mu(y)\,,\qquad x\in X\,,
\end{equation}
where the supremum is taken over all balls $B\subset X$ such that $x\in B$. The sublinear operator $M^*$ is bounded on $L^p(X)$ 
with a constant $C(c_\mu,p)$
for every $1<p\leq \infty$, see \cite[Theorem 3.13]{MR2867756}.  

Finally, we say that a function $\eta\colon X\to\R$ is $L$-Lipschitz, if
$\lvert \eta(x)-\eta(y)\rvert \le Ld(x,y)$ for every $x,y\in X$.

\section{Fractional Haj{\l}asz gradient}\label{s.frac_haj}

We define fractional Haj{\l}asz gradients in open sets as in \cite{MR3471303}, by adapting~\cite[Definition~1.1]{MR2764899}.

\begin{definition}\label{d.fhg}
Let $0<\beta\le 1$ and let $u\colon \Omega\to\R$ be a measurable function on an open set $\Omega\subset X$. A sequence $g=(g_k)_{k\in\Z}$ of nonnegative  measurable functions is called a fractional $\beta$-Haj{\l}asz gradient of $u$, if there exists
a set $E\subset \Omega$ such that $\mu(E)=0$ and for all 
$k\in \Z$ and $x,y\in \Omega\setminus E$ satisfying $2^{-k-1}\le d(x,y)<2^{-k}$, we have
\[
\lvert u(x)-u(y)\rvert \le d(x,y)^\beta(g_k(x)+g_k(y))\,.
\]
We denote by $\mathbb{D}^\beta_\Omega(u)$ the collection of all fractional
$\beta$-Haj{\l}asz gradients of $u$.
\end{definition}

We need the following
metric space variant of the Poincar\'e type inequality \cite[Lemma~2.1]{MR2764899}.

\begin{lemma}\label{lemma: Poincare}
Let $0<\beta<1$ and assume that $\mu$ satisfies the  
reverse doubling condition \eqref{e.rev_dbl_decay} with a constant $0<c_R<1$.
Let $x_0\in X$, let $n\in\Z$ be such that $2^{-n+3}<\diam(X)$, and assume
that $u\in L^1(B(x_0,2^{-n+2}))$ and 
$(g_k)_{k\in\Z}\in \mathbb{D}_{B(x_0,2^{-n+2})}^\beta(u)$.
Then
\[
\vint_{B(x_0,2^{-n})}|u(x)-u_{B(x_0,2^{-n})}|\,d\mu(x)
\le C(c_\mu,c_R)2^{-n\beta}\sum_{k= n-3}^n\;\vint_{B(x_0,2^{-n+2})}g_k(x)\,d\mu(x)\,.
\]
\end{lemma}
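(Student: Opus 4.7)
The plan is to bound the oscillation of $u$ on $B:=B(x_0,2^{-n})$ by inserting an intermediate point $z$ drawn from a concentric annulus, so that all relevant distances are comparable to $2^{-n}$ and the fractional Hajłasz gradient assumption engages only finitely many scales. Set $B_0:=B(x_0,2^{-n+2})=4B$ and $A:=B_0\setminus 2B$. The point of this choice is that for every $x\in B$ and $z\in A$ one has
\begin{equation*}
2^{-n}<d(x,z)<5\cdot 2^{-n}<2^{-n+3},
\end{equation*}
so the unique integer $k$ with $2^{-k-1}\le d(x,z)<2^{-k}$ lies in $\{n-3,n-2,n-1\}$.

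First I would estimate
\begin{equation*}
\vint_B\lvert u(x)-u_B\rvert\,d\mu(x)\le\vint_B\vint_B\lvert u(x)-u(y)\rvert\,d\mu(y)\,d\mu(x),
\end{equation*}
apply the triangle inequality $\lvert u(x)-u(y)\rvert\le\lvert u(x)-u(z)\rvert+\lvert u(z)-u(y)\rvert$, average over $z\in A$, and use Fubini to reach
\begin{equation*}
\vint_B\lvert u(x)-u_B\rvert\,d\mu(x)\le 2\vint_A\vint_B\lvert u(x)-u(z)\rvert\,d\mu(x)\,d\mu(z).
\end{equation*}
For $\mu\otimes\mu$-a.e.\ pair $(x,z)\in B\times A$, both points lie in $B_0\setminus E$, so Definition~\ref{d.fhg} yields $\lvert u(x)-u(z)\rvert\le d(x,z)^\beta(g_k(x)+g_k(z))\le 2^{3\beta}\,2^{-n\beta}\sum_{j=n-3}^{n-1}(g_j(x)+g_j(z))$. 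Integrating and using doubling to get $\vint_B g_j\,d\mu\le c_\mu^2\vint_{B_0}g_j\,d\mu$, together with $\vint_A g_j\,d\mu\le\frac{\mu(B_0)}{\mu(A)}\vint_{B_0}g_j\,d\mu$, produces the asserted bound, after enlarging the summation index from $\{n-3,\dots,n-1\}$ to $\{n-3,\dots,n\}$ by appending the nonnegative $k=n$ term.

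The one delicate ingredient is a positive, quantitative lower bound for $\mu(A)$, without which the averaging trick collapses. Here the hypothesis $2^{-n+3}<\diam(X)$ is precisely what allows an application of the reverse doubling inequality~\eqref{e.rev_dbl_decay} at the radius $2^{-n+2}$: it gives $\mu(2B)\le c_R\,\mu(4B)$ and hence
\begin{equation*}
\mu(A)=\mu(B_0)-\mu(2B)\ge(1-c_R)\mu(B_0)>0,
\end{equation*}
so that $\mu(B_0)/\mu(A)\le (1-c_R)^{-1}$ depends only on $c_\mu$ and $c_R$. This is the single place reverse doubling enters; everything else is triangle inequality, Fubini, and doubling.
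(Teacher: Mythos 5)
Your argument is correct and takes essentially the same route as the paper's: the same annulus $A=B(x_0,2^{-n+2})\setminus B(x_0,2^{-n+1})$, the same scale range $\{n-3,n-2,n-1\}$, and the same use of doubling together with reverse doubling at radius $2^{-n+2}$ (justified exactly by the hypothesis $2^{-n+3}<\diam(X)$) to get $\mu(A)\ge(1-c_R)\mu(B(x_0,2^{-n+2}))$. The only cosmetic difference is that the paper subtracts the constant $u_A$ and applies Jensen to reach the double average over $B\times A$ directly, whereas you first pass to $\vint_B\vint_B|u(x)-u(y)|\,d\mu(y)\,d\mu(x)$ and then insert an intermediate point $z\in A$ via the triangle inequality.
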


\begin{proof}
We estimate
\begin{equation}\label{e.poi1}
\begin{split}
&\vint_{B(x_0,2^{-n})}|u(x)-u_{B(x_0,2^{-n})}|\,d\mu(x)
\le 2\vint_{B(x_0,2^{-n})}|u(x)-u_{B(x_0,2^{-n+2})\setminus B(x_0,2^{-n+1})}|\,d\mu(x)\\
&\quad \le 2\vint_{B(x_0,2^{-n})}\vint_{B(x_0,2^{-n+2})\setminus B(x_0,2^{-n+1})}|u(x)-u(y)|\,d\mu(y)\,d\mu(x)\,.
\end{split}
\end{equation}
We let $E\subset B(x_0,2^{-n+2})$ be the exceptional set for $(g_k)_{k\in\Z}$ as in Definition \ref{d.fhg}.  
Fix points $x\in B(x_0,2^{-n})$ and $y\in B(x_0,2^{-n+2})\setminus B(x_0,2^{-n+1})$ such that $x,y\not\in E$. Then
\[
2^{-n}\le d(x,y) < 2^{-n+3}
\]
and therefore
\[
\lvert  u(x)-u(y)\rvert \le d(x,y)^\beta\left(\sum_{k=n-3}^{n-1} g_k(x) +\sum_{k=n-3}^{n-1} g_k(y)\right)\,.
\]
Since $\mu(E)=0$, it follows that
\begin{equation}\label{e.poi2}
\begin{split}
&\vint_{B(x_0,2^{-n})}\vint_{B(x_0,2^{-n+2})\setminus B(x_0,2^{-n+1})}|u(x)-u(y)|\,d\mu(y)\,d\mu(x)\\&
\quad \le 
2^{-n\beta+3}\biggl(\vint_{B(x_0,2^{-n})} \sum_{k=n-3}^{n-1} g_k(x)\,d\mu(x) + \vint_{B(x_0,2^{-n+2})\setminus B(x_0,2^{-n+1})}\sum_{k=n-3}^{n-1} g_k(y)\,d\mu(y)\biggr)\,. 
\end{split}
\end{equation}
By the doubling condition \eqref{e.doubling}, we get
\begin{equation}\label{e.poi3}
\vint_{B(x_0,2^{-n})} \sum_{k=n-3}^{n-1} g_k(x)\,d\mu(x)\le 
c_\mu^2\sum_{k=n-3}^{n-1} \vint_{B(x_0,2^{-n+2})} g_k(x)\,d\mu(x)\,.
\end{equation}
By the assumed reverse doubling condition \eqref{e.rev_dbl_decay}, we have
\[
\mu(B(x_0,2^{-n+1}))\le c_R\,\mu(B(x_0,2^{-n+2}))\,,
\]
and therefore
\begin{align*}
\mu(B(x_0,2^{-n+2})\setminus B(x_0,2^{-n+1}))\ge (1-c_R)\mu(B(x_0,2^{-n+2}))\,.
\end{align*}
Hence,
\begin{equation}\label{e.poi4}
\vint_{B(x_0,2^{-n+2})\setminus B(x_0,2^{-n+1}))}\sum_{k=n-3}^{n-1} g_k(y)\,d\mu(y)
\le \frac{1}{1-c_R}\sum_{k=n-3}^{n-1} \vint_{B(x_0,2^{-n+2})} g_k(x)\,d\mu(x)\,.
\end{equation}
The desired Poincar\'e inequality follows by combining inequalities \eqref{e.poi1}, \eqref{e.poi2}, \eqref{e.poi3} and
\eqref{e.poi4}.
\end{proof}

The following Leibniz type rule for $\beta$-Haj{\l}asz gradients is from \cite[Lemma 3.10]{MR3471303}. 
The proof is a case study.

\begin{lemma}\label{LemWithLipForTL}
Let $0<\beta<1$ and
let $(g_k)_{k\in\Z}\in\mathbb{D}^\beta_X(u)$ be a fractional $\beta$-Haj{\l}asz gradient of a measurable function $u\colon X\to \R$.
Assume that $\eta$ is a bounded $L$-Lipschitz function in $X$
and define
\[
\rho_k=\big(g_k\Vert \eta\Vert_{\infty}+2^{k(\beta-1)}L|u|\big)\mathbf{1}_{\{x\in X\,:\,\eta(x)\not=0\}}\,,\qquad k\in\Z\,.
\]
Then $(\rho_k)_{k\in\Z}\in\mathbb{D}^\beta_X(\eta u)$.
\end{lemma}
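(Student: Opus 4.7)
The plan is to verify the defining inequality of Definition \ref{d.fhg} directly, using the exceptional set already provided by $(g_k)_{k\in\Z}$ and splitting into cases according to whether $\eta$ vanishes at the endpoints. Let $E\subset X$ be the $\mu$-null set associated with $(g_k)\in\mathbb{D}^\beta_X(u)$; I will show that this same $E$ works for $(\rho_k)$ and $\eta u$. Fix $k\in\Z$ and $x,y\in X\setminus E$ with $2^{-k-1}\le d(x,y)<2^{-k}$. The basic algebraic identity I will use is
\[
\eta(x)u(x)-\eta(y)u(y)=\eta(x)\bigl(u(x)-u(y)\bigr)+u(y)\bigl(\eta(x)-\eta(y)\bigr),
\]
together with the elementary estimate $d(x,y)^{1-\beta}\le 2^{-k(1-\beta)}=2^{k(\beta-1)}$, which turns any bare Lipschitz factor $Ld(x,y)$ into the desired $d(x,y)^\beta\cdot 2^{k(\beta-1)}L$.

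In the main case $\eta(x)\ne 0$ and $\eta(y)\ne 0$, both indicator functions equal $1$, so I combine the identity with $|u(x)-u(y)|\le d(x,y)^\beta(g_k(x)+g_k(y))$ and $|\eta(x)-\eta(y)|\le Ld(x,y)$ to obtain
\[
|\eta(x)u(x)-\eta(y)u(y)|\le d(x,y)^\beta\bigl(\|\eta\|_\infty(g_k(x)+g_k(y))+2^{k(\beta-1)}L|u(y)|\bigr),
\]
which is bounded by $d(x,y)^\beta(\rho_k(x)+\rho_k(y))$ since the right-hand side of the definition of $\rho_k$ is symmetric in $x$ and $y$. If $\eta(x)=0$ but $\eta(y)\ne 0$, then $\rho_k(x)=0$ and $|\eta(x)u(x)-\eta(y)u(y)|=|\eta(y)u(y)|=|\eta(y)-\eta(x)||u(y)|\le Ld(x,y)|u(y)|\le d(x,y)^\beta\cdot 2^{k(\beta-1)}L|u(y)|\le d(x,y)^\beta\rho_k(y)$, and the case $\eta(y)=0$, $\eta(x)\ne 0$ is symmetric. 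Finally, if $\eta(x)=\eta(y)=0$ the left-hand side is $0$ and there is nothing to prove.

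The mild subtlety — really the only place any thought is needed — is handling the case where exactly one of $\eta(x),\eta(y)$ vanishes, because there the symmetric decomposition cannot be used to produce a balanced bound; the indicator $\mathbf{1}_{\{\eta\ne 0\}}$ in the definition of $\rho_k$ is precisely what allows the second term $2^{k(\beta-1)}L|u|$ to carry the full estimate on the nonvanishing side. Beyond this case distinction, the argument is a routine case study as indicated in \cite{MR3471303}.
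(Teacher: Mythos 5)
Your proof is correct and follows essentially the same approach the paper intends (the paper refers to the argument as a "case study" and cites the source without reproducing details). The decomposition $\eta(x)u(x)-\eta(y)u(y)=\eta(x)(u(x)-u(y))+u(y)(\eta(x)-\eta(y))$, the conversion $Ld(x,y)\le d(x,y)^{\beta}\cdot 2^{k(\beta-1)}L$, reuse of the original exceptional set, and the case split on where $\eta$ vanishes are all as expected. One tiny wording remark: in the case $\eta(x)\ne 0$, $\eta(y)\ne 0$ the cleanest justification that your bound is at most $d(x,y)^{\beta}(\rho_k(x)+\rho_k(y))$ is simply $|u(y)|\le |u(x)|+|u(y)|$, rather than an appeal to symmetry of the decomposition, but this is a cosmetic point and the conclusion is right.
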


\section{Triebel--Lizorkin spaces and relative capacity}\label{s.tl_space}

Haj{\l}asz--Triebel--Lizorkin spaces were introduced in \cite{MR2764899}. The associated global capacities 
first appeared in~\cite{MR3605979} and 
they have been  studied, for instance, in~\cite{MR4104350}, \cite{KM} and \cite{math9212724}.
See also~\cite{MR1411441} and the references therein for 
the theory of Triebel--Lizorkin capacities in the Euclidean case.
In this section, we first recall Haj{\l}asz--Triebel--Lizorkin spaces
on open sets and then we define the corresponding relative capacities that seem to be novel in this generality.

Fix $1\le p<\infty$, $1\le q\le \infty$, and an open set $\Omega\subset X$. For a sequence $g=(g_k)_{k\in\Z}$ of measurable
functions in $\Omega$, we define the mixed norm 
\[
\lVert g \rVert_{L^p(\Omega;\,l^q)}
=\bigl\|\|g\|_{l^q(\Z)}\bigr\|_{L^p(\Omega)}\,,
\]
where
\[
\|g\|_{l^q(\Z)}=
\begin{cases}
\Bigl(\sum_{k\in\Z}|g_{k}|^{q}\Bigr)^{1/q},&\quad\text{ for }1\le q<\infty, \\
\;\sup_{k\in\Z}|g_{k}|,&\quad\text{ for }q=\infty.
\end{cases}
\]
We use the following
Fefferman--Stein vector-valued maximal function inequality; we refer to~\cite[Theorem~1.2]{MR2542655} for a proof.
If $1<p<\infty$ and $1< q\le \infty$, then there
exists a positive constant $C>0$ such that for all sequences of measurable
functions $(g_k)_{k\in\Z}$ in $X$ we have
\begin{equation}\label{e.fs}
\big\lVert (M^*g_k)_{k\in\Z}\big\rVert_{L^p(X;l^q)}\le C\lVert (g_k)_{k\in\Z}\rVert_{L^p(X;l^q)}\,,
\end{equation}
where $M^* g_k$ is the noncentered maximal function of $g_k$ for each $k\in\Z$.

\begin{definition}
Let $1<p<\infty$, $1\le q\le \infty$, and $0<\beta < 1$,
and let $\Omega\subset X$ be an open set. 
The homogeneous Haj{\l}asz--Triebel--Lizorkin space $\dot{M}^\beta_{p,q}(\Omega)$ 
is the seminormed space of all measurable functions $u\colon\Omega\to\R$ satisfying
\[
\lvert u\rvert_{\dot{M}^{\beta}_{p,q}(\Omega)}=\inf_{g\in\mathbb{D}^\beta_\Omega(u)} \lVert g\rVert_{L^p(\Omega;l^q)}<\infty\,.
\]
\end{definition}

Notice that $\lvert u\rvert_{\dot{M}^{\beta}_{p,q}(\Omega)}\le \lvert u\rvert_{\dot{M}^{\beta}_{p,\tau}(\Omega)}$ 
whenever $1\le \tau\le q\le \infty$.
We remark that $\lvert \,\cdot\, \rvert_{\dot{M}^{\beta}_{p,q}(\Omega)}$ is a seminorm
but not a norm since $\lvert u\rvert_{\dot{M}^{\beta}_{p,q}(\Omega)}=0$
whenever $u$ is a constant function. 

The following definition of a relative Haj{\l}asz--Triebel--Lizorkin $(\beta,p,q)$-capacity $\capm{\beta}{p}{q}$
is motivated by~\cite[Definition 4.1]{MR4649157}; see also \cite[Definition~7.1]{MR3605166}, \cite{MR3331699} and 
\cite[\S 11]{Mazya2011}.

\begin{definition}\label{d.capacity}
Let $1< p<\infty$, $1\le q\le \infty$, $0<\beta < 1$, and $\Lambda\ge  2$. 
Let $B\subset X$ be a ball and let $E\subset \iol{B}$ be a closed set. Then we write
\[
\capm{\beta}{p}{q} (E,2B,\Lambda B) = \inf_\varphi \vert \varphi\vert_{\dot{M}^{\beta}_{p,q}(\Lambda B)}^p\,,
\]
where the infimum is taken over all continuous functions 
$\varphi\colon X\to \R$  such that $\varphi(x)\ge 1$ for every $x\in E$ and 
$\varphi(x)=0$ for every $x\in X\setminus 2B$.
\end{definition}

Observe that if $F\subset E\subset\iol{B}$ are closed sets and $1\le \tau\le q\le \infty$,
then 
\begin{equation}\label{e.cap_Eq_comp}
\capm{\beta}{p}{q} (F,2B,\Lambda B)\le \capm{\beta}{p}{q} (E,2B,\Lambda B)\le \capm{\beta}{p}{\tau} (E,2B,\Lambda B)\,.
\end{equation}
Next we estimate the relative capacities of balls.
In part (a) we have $q=1$ and in part (b) $q=\infty$; compare to \eqref{e.cap_Eq_comp}.

\begin{theorem}\label{t.cap_balls}
Let $1<p<\infty$, $0<\beta <1$, $x_0\in X$, and $r>0$.
\begin{itemize}
\item[(a)]
If $\Lambda \ge 2$, %
then
\[
\capm{\beta}{p}{1} \bigl(\overline{B(x_0,r)},B(x_0,2r),B(x_0,\Lambda r)\bigr)\le  
C(c_\mu,\beta,p,\Lambda) r^{-\beta p}\mu(B(x_0,r))\,.
\]
\item[(b)]
Assume in addition that $X$ is connected
and $r<(1/8)\diam(X)$.
If $\Lambda> 2$, 
then
\[
r^{-\beta p}\mu(B(x_0,r))\le 
C(\beta,p, c_\mu,\Lambda)\capm{\beta}{p}{\infty} \bigl(\overline{B(x_0,r)},B(x_0,2r),B(x_0,\Lambda r)\bigr)\,.
\]
\end{itemize}
\end{theorem}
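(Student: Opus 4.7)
For part~(a), I exhibit the Lipschitz cutoff
\[
\varphi(x) = \max\{0,\min\{1,\,2 - d(x,x_0)/r\}\},
\]
which is admissible and $r^{-1}$-Lipschitz on $X$. A fractional $\beta$-Haj{\l}asz gradient of $\varphi$ can be built by balancing two bounds: the Lipschitz estimate $|\varphi(x)-\varphi(y)|\le r^{-1}d(x,y)$, sharp at scales $d(x,y)\lesssim r$, against the trivial bound $|\varphi(x)-\varphi(y)|\le 1$, better at larger scales. Setting
\[
g_k(z)=C_0\min\bigl\{r^{-1}2^{-k(1-\beta)},\,2^{k\beta}\bigr\}\mathbf{1}_{B(x_0,\,2r+2^{-k+1})}(z)\qquad (k\in\Z)
\]
with $C_0$ a suitable absolute constant, one verifies that $(g_k)\in\mathbb{D}^\beta_X(\varphi)$: the indicator is chosen so that whenever $2^{-k-1}\le d(x,y)<2^{-k}$ and $\varphi(x)\ne\varphi(y)$, both points lie in $B(x_0,2r+2^{-k+1})$. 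The two regimes in the minimum balance at $2^{-k}\approx r$, giving the pointwise bound $\sum_{k\in\Z}g_k(z)\le C(\beta)r^{-\beta}$ (a geometric series in each regime). Raising to the $p$-th power and integrating over $\Lambda B$ yields $\|(g_k)\|_{L^p(\Lambda B;\,l^1)}^p\le C(\beta,p)\,r^{-\beta p}\mu(\Lambda B)$, which by doubling is controlled by $C(c_\mu,\beta,p,\Lambda)\,r^{-\beta p}\mu(B(x_0,r))$, proving~(a).

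For part~(b), let $\varphi$ be admissible; by truncating to $[0,1]$ (which does not increase the $\dot{M}^{\beta}_{p,\infty}$-seminorm) I may assume $\varphi\equiv 1$ on $\overline{B(x_0,r)}$ and $\varphi\equiv 0$ outside $B(x_0,2r)$. Fix $(g_k)\in\mathbb{D}^\beta_{\Lambda B}(\varphi)$ nearly realising the infimum in $|\varphi|_{\dot{M}^{\beta}_{p,\infty}(\Lambda B)}^p$. The plan is to pick an integer $n$ with $2r<2^{-n}<\Lambda r/4$, so that $B(x_0,2^{-n})$ properly contains $B(x_0,2r)$ while $B(x_0,2^{-n+2})\subset\Lambda B$, and then apply the Poincar\'e inequality of Lemma~\ref{lemma: Poincare} on $B(x_0,2^{-n})$. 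Since $\varphi\equiv 1$ on $B(x_0,r)$ and $\varphi\equiv 0$ on $B(x_0,2^{-n})\setminus B(x_0,2r)$, doubling together with the (quantitative) reverse doubling condition forces both ratios $p_1=\mu(B(x_0,r))/\mu(B(x_0,2^{-n}))$ and $p_2=\mu(B(x_0,2^{-n})\setminus B(x_0,2r))/\mu(B(x_0,2^{-n}))$ to exceed positive constants depending only on $c_\mu$. A linear optimisation over $\varphi_{B(x_0,2^{-n})}\in[0,1]$ then gives
\[
\vint_{B(x_0,2^{-n})}|\varphi-\varphi_{B(x_0,2^{-n})}|\,d\mu \ge \min\{p_1,p_2\}>0.
\]
Feeding this into Lemma~\ref{lemma: Poincare} and pigeonholing over the four indices $k\in\{n-3,\ldots,n\}$ produces some $k^*$ with $\int_{B(x_0,2^{-n+2})}g_{k^*}\,d\mu\ge c\cdot 2^{n\beta}\mu(B(x_0,2^{-n+2}))$; Jensen's inequality then promotes this to $\int_{\Lambda B}g_{k^*}^p\,d\mu\ge c^p\cdot 2^{n\beta p}\mu(B(x_0,2^{-n+2}))$. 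Since $\sup_k g_k\ge g_{k^*}$, $2^{-n}\asymp r$, and doubling allows $\mu(B(x_0,2^{-n+2}))$ to be replaced by $\mu(B(x_0,r))$, one obtains $\|(g_k)\|_{L^p(\Lambda B;\,l^\infty)}^p\ge C^{-1}r^{-\beta p}\mu(B(x_0,r))$, and taking the infimum over $\varphi$ gives~(b).

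The main technical obstacle is the joint dyadic condition $2r<2^{-n}<\Lambda r/4$: an integer $n$ satisfying it exists precisely once $\Lambda$ exceeds an absolute threshold (roughly $\Lambda\ge 16$). To cover all $\Lambda>2$, the Poincar\'e step must be refined --- for instance by replacing the dyadic radius $2^{-n}$ by a non-dyadic $R\in(2r,\Lambda r/4)$ and invoking the quantitative reverse doubling condition~\eqref{e.reverse_doubling} (available because $X$ is connected) to keep $\mu(B(x_0,R)\setminus B(x_0,2r))/\mu(B(x_0,R))$ bounded below by a positive constant depending on $\Lambda$; the $\Lambda$-dependence of the final constant $C$ enters through precisely this step. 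The hypothesis $r<\diam(X)/8$ is used only to guarantee that the enlarged Poincar\'e ball still satisfies the diameter condition $2^{-n+3}<\diam(X)$ required by Lemma~\ref{lemma: Poincare}.
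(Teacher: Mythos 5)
Your part (a) is correct and takes a slightly different route from the paper. The paper derives the fractional Haj{\l}asz gradient of $\varphi$ from the Leibniz rule (Lemma~\ref{LemWithLipForTL}) applied with $u\equiv 1$, yielding $\rho_k=2^{k(\beta-1)}r^{-1}\mathbf{1}_{\{\varphi\neq 0\}}$, and then truncates at the smallest scale $m$ relevant to $\Lambda B$. You instead build the gradient directly by balancing the Lipschitz bound $|\varphi(x)-\varphi(y)|\le r^{-1}d(x,y)$ against the trivial bound $|\varphi(x)-\varphi(y)|\le 1$, which gives $g_k=C_0\min\{r^{-1}2^{-k(1-\beta)},2^{k\beta}\}\mathbf{1}_{B(x_0,2r+2^{-k+1})}$ and the uniform bound $\sum_k g_k\le C(\beta)r^{-\beta}$. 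Both produce the same $r^{-\beta p}\mu(B)$ estimate; your version is self-contained and does not rely on Lemma~\ref{LemWithLipForTL}.

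Part (b) has a genuine gap that you identify but do not close. Your argument requires an integer $n$ with $2r<2^{-n}<\Lambda r/4$ so that $B(x_0,2^{-n+2})\subset\Lambda B$; moreover, for the annulus $B(x_0,2^{-n})\setminus B(x_0,2r)$ to carry a definite fraction of $\mu(B(x_0,2^{-n}))$ via the reverse doubling condition \eqref{e.rev_dbl_decay}, you in fact need $2^{-n}\ge 4r$. Combined, this forces $\Lambda>16$, whereas the theorem claims $\Lambda>2$. There is a second, independent issue: Lemma~\ref{lemma: Poincare} requires $2^{-n+3}<\diam(X)$; together with $2^{-n}>2r$ this forces $r<\diam(X)/16$ (and $r<\diam(X)/32$ if $2^{-n}\ge 4r$), which is strictly stronger than the stated hypothesis $r<\diam(X)/8$. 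So, contrary to your closing remark, $r<\diam(X)/8$ does \emph{not} guarantee the diameter condition for your choice of $n$.

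The paper avoids both obstacles by not invoking Lemma~\ref{lemma: Poincare} as a black box. Writing $\lambda=\min\{3,\Lambda\}>2$, it estimates
\[
1\le\vint_{B}\varphi\,d\mu=\vint_B|\varphi-\varphi_{\lambda B\setminus 2B}|\,d\mu\le\vint_B\vint_{\lambda B\setminus 2B}|\varphi(x)-\varphi(y)|\,d\mu(y)\,d\mu(x)
\]
directly via the fractional Haj{\l}asz gradient at the three dyadic scales containing $d(x,y)$ for $x\in B$, $y\in\lambda B\setminus 2B$. This works for every $\lambda>2$ because the inner and outer radii of the annulus need not be dyadically related, and only requires $\lambda r<4r<\diam(X)/2$, i.e.\ $r<\diam(X)/8$. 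Your suggested repair --- a non-dyadic $R$ together with the quantitative condition \eqref{e.reverse_doubling} --- points in the right direction, but \eqref{e.reverse_doubling} with constant $c_\sigma\ge 1$ does not by itself give a positive lower bound for $\mu(B(x_0,R)\setminus B(x_0,2r))/\mu(B(x_0,R))$ when $R/(2r)$ is close to $1$. The ingredient actually used in the paper is the connectedness-based annulus mass estimate (cf.~\cite[Lemma~3.7]{MR2867756}), which produces a $\Lambda$-dependent constant $0<c_R<1$ with $\mu(2B)\le c_R\mu(\lambda B)$ even when $\lambda/2$ is only slightly larger than $1$.
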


\begin{proof}
(a) Write $B=B(x_0,r)$ and let
\[
\varphi(x)=\max \bigl\{0,1-r^{-1}\dist(x,B)\bigr\}\,
\]
for every $x\in X$. 
Then $0\le\varphi\le 1$ in $X$, $\varphi=1$ in $\iol{B}$, $\varphi=0$ in $X\setminus 2B$, 
and  $\varphi$ is an $r^{-1}$-Lipschitz function in $X$.
Since $\varphi$ is admissible for the relative capacity, we have
\[
\capm{\beta}{p}{1} (\iol{B},2B,\Lambda B) \le \lvert \varphi\rvert_{\dot{M}^{\beta}_{p,1}(\Lambda B)}^p\,.
\]
Lemma~\ref{LemWithLipForTL}, with $u=1$ and $g_k=0$ for all $k\in\Z$, implies that
$(\rho_k)_{k\in\Z}\in\mathbb{D}^\beta_X(\varphi)$, where
\[
\rho_k=2^{k(\beta-1)}r^{-1}\mathbf{1}_{\{x\in X\,:\,\varphi(x)\not=0\}}\,,\qquad \text{ for all }k\in\Z\,.
\]
Let $m$ be the smallest integer
such that $2^{-m-1}\le 2\Lambda r$.
Observe that $(\mathbf{1}_{k\ge m}\rho_k)_{k\in\Z}\in  \mathbb{D}_{\Lambda B}^\beta(\varphi)$.
Thus
\begin{align*}
\lvert \varphi\rvert_{\dot{M}^{\beta}_{p,1}(\Lambda B)}^p
&\le \big\lVert (\mathbf{1}_{k\ge m} \rho_k)_{k\in\Z}\big\rVert_{L^p(\Lambda B;l^1)}^p
=\int_{\Lambda B} \left(\sum_{k=m}^\infty \rho_k(x)\right)^{p}\,d\mu(x)
\\&\le r^{-p}\int_{2 B} \left(\sum_{k=m}^\infty 2^{k(\beta-1)}\right)^{p}\,d\mu(x)\\
&\le C(\beta,p) r^{-p} 2^{mp(\beta-1)} \mu(2B)\le C(c_\mu,\beta,p,\Lambda) r^{-\beta p}\mu(B)\,,
\end{align*}
and the claim (a) follows from this.

(b) The proof is similar to the proof of Lemma~\ref{lemma: Poincare}.
Write $B=B(x_0,r)$ and choose $\lambda=\min\{3,\Lambda\}>2$.
Let $\varphi\colon X\to \R$ be a continuous function such that $\varphi(x)\ge 1$ for every $x\in \iol{B}$ and 
$\varphi(x)=0$ for every $x\in X\setminus 2B$,
and let $g=(g_k)_{k\in\Z}\in\mathbb{D}^\beta_{\Lambda B}(\varphi)$.

Fix $m\in\Z$ such that $2^{-m-1}\le r<2^{-m}$. Then
\[
2^{-m-1}\le r \le  d(x,y) <(1+\lambda)r\le  4r<2^{-m+2}
\]
for all $x\in B$ and $y\in \lambda B\setminus 2 B$.
Since $\varphi\ge 1$ in $B$ and $\varphi=0$ in $\lambda B\setminus 2B$,
we can proceed as in \eqref{e.poi1} and obtain
\begin{equation}\label{e.spoi1}
\begin{split}
1&\le \vint_{B} \varphi(x)\,d\mu(x)
=\vint_{B} \lvert \varphi(x)-\varphi_{\lambda B\setminus 2B}\rvert\,d\mu(x)\\
&\le \vint_{B}\vint_{\lambda B\setminus 2B} \lvert \varphi(x)-\varphi(y)\rvert\,d\mu(y)\,d\mu(x)\\ 
&\le 4r^\beta\sum_{k=m-2}^{m}\vint_{B}  g_k(x)\,d\mu(x)+4r^\beta\sum_{k=m-2}^{m}\vint_{\lambda B\setminus 2B} g_k(y)\,d\mu(y)\,.
\end{split}
\end{equation}
Here
\begin{equation}\label{e.spoi3}
\begin{split}
\sum_{k=m-2}^{m}\vint_{B}  g_k(x)\,d\mu(x) & \le \sum_{k=m-2}^{m}\left(\vint_{B}  g_k(x)^p\,d\mu(x)\right)^{1/p}\\
& \le 3\left(\vint_{B}  \left(\sup_{k\in\Z} g_k(y)\right)^{p}\,d\mu(x)\right)^{1/p}\,.
\end{split}
\end{equation}

Since $X$ is connected and $\lambda r<4r<\diam(X)/2$, by~\cite[Lemma~3.7]{MR2867756} there exists a constant $0<c_R=C(c_\mu,\Lambda)<1$ such that
\[
\mu(2B)=\mu(B(x_0,2r))\le c_R\,\mu(B(x_0,\lambda r))=c_R\,\mu(\lambda B)\,,
\]
and therefore
\begin{align*}
\mu(\lambda B\setminus 2B)\ge (1-c_R)\mu(\lambda B)\,.
\end{align*}
Hence,
\begin{equation}\label{e.spoi4}
\begin{split}
\sum_{k=m-2}^{m} \vint_{\lambda B\setminus 2B}g_k(y)\,d\mu(y)
&\le \frac{1}{1-c_R}\sum_{k=m-2}^{m}  \vint_{\lambda B} g_k(x)\,d\mu(x)\\
&\le \frac{1}{1-c_R}\sum_{k=m-2}^{m}  \left(\vint_{\lambda B} g_k(x)^p\,d\mu(x)\right)^{1/p}\\
&\le \frac{3}{1-c_R}  \left(\vint_{\lambda B} \left(\sup_{k\in\Z} g_k(y)\right)^{p}\,d\mu(x)\right)^{1/p}\,.
\end{split}
\end{equation}
By combining inequalities \eqref{e.spoi1},  \eqref{e.spoi3} and \eqref{e.spoi4}, we obtain
\begin{align*}
1
&\le \left(\vint_{B}\vint_{\lambda B\setminus 2B} \lvert \varphi(x)-\varphi(y)\rvert\,d\mu(y)\,d\mu(x)\right)^p\\
&\le C(p,c_R)r^{\beta p}\left(\vint_{B}  \left(\sup_{k\in\Z} g_k(y)\right)^{p}\,d\mu(x)+\vint_{\lambda B} \left(\sup_{k\in\Z} g_k(y)\right)^{p}\,d\mu(x)\right)\,,
\end{align*}
and thus
\[
r^{-\beta p}\mu(B)\le C(p,c_R,\beta)\int_{\Lambda  B} \left(\sup_{k\in\Z} g_k(y)\right)^{p}\,d\mu(x)\,.
\]
Since $c_R=C(c_\mu,\Lambda)$, the desired inequality follows by taking infimum over all functions
$\varphi$ and $g$ as above.
\end{proof}

Observe the strict lower bound $\Lambda > 2$ in part (b) of Theorem \ref{t.cap_balls}.
The following Example~\ref{e.ball_example},
together with inequality~\eqref{e.cap_Eq_comp}, shows that we cannot relax this lower bound to 
$\Lambda \ge  2$.

\begin{example}\label{e.ball_example}
Let $X=\R^n$ equipped with the Euclidean distance and the Lebesgue measure.
Fix $1<p<\infty$ and $0<\beta<1$ such that $\beta p<1$.
We show that 
\begin{equation}\label{e.zero_cap}
\capm{\beta}{p}{1}\bigl(\overline{B(0,1)},B(0,2),B(0,2)\bigr)=0\,.
\end{equation}

Fix $j\in\N$ and
let
\[
\varphi_j(x)=\min \Bigl\{1, 2^j\dist(x,\R^n\setminus B(0,2))\Bigr\}
\] 
for every $x\in \R^n$.
Observe that $0\le \varphi_j\le 1$ in $\R^n$, $\varphi_j(x)=1$ for every $x\in \overline{B(0,1)}$ and $\varphi_j(x)=0$ for
every $\R^n\setminus B(0,2)$. It follows that $\varphi_j$ is an admissible
test function for the capacity on the left-hand side of \eqref{e.zero_cap}.
Write $B_j=B(0,2-2^{-j})$ and $A_j=B(0,2)\setminus B_j$.
For every $k\in\Z$, we define
\[
g_k(x)=\begin{cases}
0\,,\quad &\text{ if }k<-2\,,\\
2^{j\beta}\mathbf{1}_{A_j}\,,\quad &\text{ if }-2\le k< j\,,\\
2^{j-k(1-\beta)}\mathbf{1}_{A_j}\,,\quad &\text{ if }j\le k\,.
\end{cases}
\]
Since $\varphi_j=1$ in $B_j$, a straightforward case study shows that $(g_k)_{k\in\mathbb{Z}}\in \mathbb{D}^\beta_{B(0,2)}(\varphi_j)$. 
Hence, 
\begin{align*}
&\capm{\beta}{p}{1}\bigl(\overline{B(0,1)},B(0,2),B(0,2)\bigr)
\le  \lvert \varphi_j\rvert_{\dot{M}^{\beta}_{p,1}(B(0,2))}^p\le \int_{B(0,2)} \left(\sum_{k\in\Z} g_k(x)\right)^{p}\,d\mu(x)\\
&\qquad \le 2^{p-1}\int_{A_j} \left(\sum_{k=-2}^{j-1} 2^{j\beta}\right)^{p}\,d\mu(x)+2^{p-1}\int_{A_j} \left(\sum_{k=j}^\infty 2^{j-k(1-\beta)}\right)^{p}\,d\mu(x)\,.
\end{align*}
Here
\begin{align*}
\int_{A_j} \left(\sum_{k=-2}^{j-1} 2^{j\beta}\right)^{p}\,d\mu(x)
\le \lvert A_j\rvert(j+2)^{p}2^{j\beta p}\le C(n)2^{-j(1-\beta p)}(j+2)^{p}
\end{align*}
and
\begin{align*}
\int_{A_j} \left(\sum_{k=j}^\infty 2^{j-k(1-\beta)}\right)^{p}\,d\mu(x)
& = \lvert A_j\rvert 2^{j\beta p}\left(\sum_{k=j}^\infty 2^{(j-k)(1-\beta)}\right)^{p}\\
& \le C(p,n,\beta)2^{-j(1-\beta p)}\,.
\end{align*}
Since $\beta p<1$, by taking $j\to\infty$ we see that  \eqref{e.zero_cap} holds.
\end{example}

We need
suitable Hausdorff contents to obtain more elaborate lower
bounds for relative capacities of general sets,
see Theorem \ref{t.HC_bp} below.

\begin{definition}\label{d.hcc}
Let $0<\rho\le\infty$ and $d\ge 0$.
The ($\rho$-restricted) Hausdorff content of codimension $d$ 
of a set $F\subset X$ is defined by  
\[
\Ha^{\mu,d}_\rho(F)=\inf\Biggl\{\sum_{k} \mu(B(x_k,r_k))\,r_k^{-d} \,:\,
F\subset\bigcup_{k} B(x_k,r_k)\text{ and } 0<r_k\leq \rho  \Biggr\}\,.
\]
\end{definition}

For $q=\infty$ we have the following lower bound estimate for relative capacities
in terms of Hausdorff contents. Observe that by \eqref{e.cap_Eq_comp} the
corresponding result holds for all $1\le q <\infty$ as well. 
We refer to~\cite{KM} for recent advances on related  lower bound estimates.

\begin{theorem}\label{t.HC_bp}
Let $1<p<\infty$, $0\le \eta<p$, $0<\beta <1$, and $\Lambda\ge 41$, and
assume that $\mu$ satisfies the
reverse doubling condition \eqref{e.rev_dbl_decay} with a constant $0<c_R<1$. 
Let $B=B(x_0,r)$, where $x_0\in X$ and $0<r<(1/80)\diam(X)$,
and let $E\subset \iol{B}$ be a closed set.
Then there exists a constant $C=C(\beta,p,\eta,c_R,c_\mu,\Lambda)>0$ such that
\[
\mathcal{H}^{\mu,\beta\eta}_{5\Lambda r}(E) \le 
C r^{\beta(p-\eta)}\capm{\beta}{p}{\infty}(E,2B,\Lambda B).
\]
\end{theorem}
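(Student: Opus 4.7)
The plan is a chaining-plus-covering argument adapted to fractional Haj{\l}asz gradients. First, by truncating (replacing $\varphi$ with $\min(\varphi,1)$, whose fractional $\beta$-Haj{\l}asz gradient is dominated by that of $\varphi$), I would restrict to admissible test functions satisfying $0\le\varphi\le 1$ in $X$. Fix such a $\varphi$ together with $g=(g_k)_{k\in\Z}\in\mathbb{D}^\beta_{\Lambda B}(\varphi)$, and set $g^*=\sup_{k\in\Z}g_k$, so that $\|g^*\|_{L^p(\Lambda B)}=\|g\|_{L^p(\Lambda B;\,l^\infty)}$. The goal is to establish
\[
\Ha^{\mu,\beta\eta}_{5\Lambda r}(E)\le C\,r^{\beta(p-\eta)}\,\|g^*\|_{L^p(\Lambda B)}^p
\]
and then take the infimum over $\varphi$ and $g$.

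The first main step is a pointwise chaining estimate. For each $x\in E$ and every sufficiently large $N\in\Z$, Lemma~\ref{lemma: Poincare} applied to the concentric balls $B(x,2^{-n})$ with $n\ge N$ yields $|\varphi_{B(x,2^{-n-1})}-\varphi_{B(x,2^{-n})}|\le C\,2^{-n\beta}\,M^*(g^*)(x)$. Since $\varphi$ is continuous, $\varphi_{B(x,2^{-n})}\to\varphi(x)$ as $n\to\infty$, and telescoping gives
\[
|\varphi(x)-\varphi_{B(x,2^{-N})}|\le C\,2^{-N\beta}\,M^*(g^*)(x).
\]
Iterating the reverse doubling condition~\eqref{e.rev_dbl_decay} a number of times depending only on $c_\mu$ and $c_R$, one finds for each $x\in E$ a scale $r_x\le C(c_\mu,c_R)\,r$ such that $\mu(B(x,r_x))\ge 2\mu(2B)$; since $0\le\varphi\le 1$ vanishes off $2B$, this forces $\varphi_{B(x,r_x)}\le 1/2$, and together with $\varphi(x)\ge 1$ it yields
\[
M^*(g^*)(x)\ge c\,r_x^{-\beta}\quad\text{for every }x\in E,
\]
hence by Jensen's inequality also $M^*((g^*)^p)(x)\ge c^p\,r_x^{-\beta p}$.

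The final step is the covering. Applying the $5B$-covering lemma to $\{B(x,r_x):x\in E\}$ extracts a disjoint subfamily $\{B_i=B(x_i,r_i)\}$ whose fivefold enlargements cover $E$; the assumption $\Lambda\ge 41$ ensures $5r_i\le 5\Lambda r$, so these enlargements are admissible for the content. Doubling then gives
\[
\Ha^{\mu,\beta\eta}_{5\Lambda r}(E)\le\sum_i\mu(5B_i)\,(5r_i)^{-\beta\eta}\le C\sum_i\mu(B_i)\,r_i^{-\beta\eta}.
\]
I would then partition $\{B_i\}$ into dyadic levels $I_j=\{i:r_i\in(R_0 2^{-j-1},R_0 2^{-j}]\}$, where $R_0=C(c_\mu,c_R)\,r$ is the uniform upper bound on $r_x$, and on each level combine the pointwise bound $M^*((g^*)^p)(x_i)\ge c\,2^{j\beta p}R_0^{-\beta p}$ with a Vitali-type covering and the weak $(1,1)$ bound for the noncentered maximal function to deduce $\sum_{i\in I_j}\mu(B_i)\le C\,2^{-j\beta p}R_0^{\beta p}\,\|g^*\|_{L^p(\Lambda B)}^p$. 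Summing over $j\ge 0$ and using the strict inequality $\eta<p$ to control the geometric series $\sum_j 2^{j\beta(\eta-p)}$ gives the desired bound.

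\textbf{Main obstacle.} The delicate point is the final per-level estimate: the disjoint balls $B_i$ for $i\in I_j$ need not lie inside a level set of $M^*((g^*)^p)$, so a careful enlargement argument exploiting the noncentered nature of $M^*$ together with doubling is required to reduce the summation to a standard Vitali-type weak bound at scale $\sim R_0 2^{-j}$. The strict inequality $\eta<p$ is essential so that, after summing the level estimates, the resulting geometric series converges.
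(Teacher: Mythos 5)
Your initial steps (truncation, the chaining/telescoping via Lemma~\ref{lemma: Poincare}, and the reverse-doubling argument that forces $\varphi_{B(x,r_x)}$ strictly below $1$) are all sound, and the reverse-doubling step is close to what the paper does, though the paper achieves it in a single application of \eqref{e.rev_dbl_decay} rather than by iteration, which also keeps $\Lambda\ge 41$ sharp: iterating to force $\mu(B(x,r_x))\ge 2\mu(2B)$ makes $r_x/r$ depend on $c_R$ and can push $B(x,4r_x)$ outside $\Lambda B$ when $c_R$ is close to $1$.

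The genuine gap is that you collapse all scale information as soon as you bound the Poincar\'e sum by $M^*(g^*)(x)$. After this step you only know $M^*((g^*)^p)(x)\ge c\,r^{-\beta p}$ for all $x\in E$, a single-scale statement in which $r_x\sim r$ uniformly. Consequently there is no genuine range of radii: the family $\{B(x,r_x)\}$ consists of balls of essentially one size, your dyadic levels $I_j$ are all empty except for bounded $j$, and the per-level Vitali estimate has nothing to sum over. Worse, the maximal function bound alone cannot yield the Hausdorff content inequality for $\eta>0$: for each $x$ there is some noncentered ball $B_x\ni x$ with $\mu(B_x)\le C r^{\beta p}\int_{B_x}(g^*)^p\,d\mu$, but to convert this into $\mu(B_x)\,r_{B_x}^{-\beta\eta}\le C r^{\beta(p-\eta)}\int_{B_x}(g^*)^p\,d\mu$ one would need $r_{B_x}\gtrsim r$, which is not forced by the weak $(1,1)$ inequality --- the maximal function may attain its large value at arbitrarily small scales.

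What is missing is the \emph{good-scale selection}. The paper keeps the scale-dependent Poincar\'e sum
\[
1-c_R\le C\sum_{n\ge m-2}2^{-n\beta}\Bigl(\vint_{B(x,2^{-n})}(g^*)^p\,d\mu\Bigr)^{1/p}
\]
and compares it termwise with the convergent geometric series $\sum_{n\ge m-2}2^{-n\delta}$, where $\delta=\beta(p-\eta)/p>0$. This pigeonholes a single $n=n(x)\ge m-2$ at which the contribution is at least as large as the geometric model, and \emph{at that specific scale} the desired local inequality $\mu(B(x,2^{-n}))(2^{-n})^{-\beta\eta}\le C\,r^{\beta(p-\eta)}\int_{B(x,2^{-n})}(g^*)^p\,d\mu$ holds. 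This is precisely where the exponent $\beta\eta$ on the content and the factor $r^{\beta(p-\eta)}$ are generated, and where the strict inequality $\eta<p$ enters (so that $\delta>0$ and the comparison series converges). The $5r$-covering lemma applied to these good balls, which now have genuinely varying radii $2^{-n(x)}$, then finishes the proof with no need for dyadic levels or per-level weak-type estimates. Without this pigeonhole step, the covering argument cannot produce the $r_k^{-\beta\eta}$ weights.
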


\begin{proof}
Let $\varphi\colon X\to \R$ be a continuous function   such that $\varphi(x)\ge 1$ for every $x\in E$ and 
$\varphi(x)=0$ for every $x\in X\setminus 2B$
and let $g=(g_k)_{k\in\Z}\in\mathbb{D}^\beta_{\Lambda B}(\varphi)$.
By replacing the function $\varphi$ with $\max\{0,\min\{\varphi,1\}\}$, if necessary, we may assume that $0\le \varphi \le 1$. 
Thus $\varphi$ is continuous on $X$,  $\varphi=1$ on $E$, and $\varphi=0$ on $X\setminus 2B$. 
Fix $m\in \Z$ such that
$2^{-m-1}\le 5r<2^{-m}$. Let $x\in E$ and write $B_0=B(x,2^{-m})$.

Since $8r<80r<\diam(X)$, by inequality \eqref{e.rev_dbl_decay} we have
\begin{align*}
0\le \varphi_{B_0}=\vint_{B_0} \varphi(y)\,dy \le \frac{\mu(2B)}{\mu(B_0)}
\le \frac{\mu(B(x_0,2r))}{\mu(B(x_0,4r))}
\le c_R < 1.
\end{align*}
As a consequence, since $x\in E$, we find that
\[
\lvert \varphi(x)- \varphi_{B_0}\rvert  \ge 1-c_R>0.
\]
Define $B_n=2^{-n}B_0=B(x,2^{-(m+n)})$ for each $n\in\N$.
By the doubling property of $\mu$,
\begin{align*}
0<1-c_R & \le \lvert \varphi(x)-\varphi_{B_0}\rvert=\lim_{k\to \infty} \lvert \varphi_{B_k}-\varphi_{B_0}\rvert=\lim_{k\to\infty} \left\lvert \sum_{n=0}^{k-1} (\varphi_{B_{n+1}}-\varphi_{B_n})\right\rvert\\
&\le \sum_{n=0}^\infty \intav_{B_{n+1}} \lvert \varphi(y)-\varphi_{B_n}\rvert\, d\mu(y)
\le c_\mu\sum_{n=0}^\infty \intav_{B_{n}} \lvert \varphi(y)-\varphi_{B_n}\rvert\, d\mu(y)\,.
\end{align*}
Fix $n\in\N_0$.
Observe that $2^{-(m+n)+3}\le 80r<\diam(X)$ and that
$B(x,2^{-(m+n)+2})\subset \Lambda B$.
  Hence, Lemma~\ref{lemma: Poincare} implies that
\begin{align*}
\intav_{B_{n}} \lvert \varphi(y)-\varphi_{B_n}\rvert\, d\mu(y)&\le 
C(c_\mu,c_R)2^{-(m+n)\beta}\sum_{k= n+m-3}^{n+m}\;\vint_{B(x,2^{-(m+n)+2})}g_k(y)\,d\mu(y)\,.
\end{align*}
By combining the above estimates, we get
\begin{align*}
0 & < 1-c_R\le c_\mu\sum_{n=0}^\infty \intav_{B_{n}} \lvert \varphi(y)-\varphi_{B_n}\rvert\, d\mu(y)\\
&\le 
C(c_\mu,c_R)\sum_{n=0}^\infty2^{-(m+n)\beta} \sum_{k= n+m-3}^{n+m}\vint_{B(x,2^{-(m+n)+2})} g_k(y)\,d\mu(y)\\
&\le 
C(c_\mu,c_R)\sum_{n=m-2}^\infty 2^{-n \beta}\;\sum_{k= n-1}^{n+2}\vint_{B(x,2^{-n})}  g_k(y)\,d\mu(y)\\
&\le  
C(c_\mu,c_R)\sum_{n=m-2}^\infty 2^{-n \beta}\;\sum_{k= n-1}^{n+2}\left(\vint_{B(x,2^{-n})}   g_k(y)^p\,d\mu(y)\right)^{1/p}\\
&\le  
C(c_\mu,c_R)\sum_{n=m-2}^\infty 2^{-n \beta}\;\left(\vint_{B(x,2^{-n})}  \left(\sup_{k\in\Z} g_k(y)\right)^{p}\,d\mu(y)\right)^{1/p}\,.
\end{align*}

Let $\delta=\beta(p-\eta)/p>0$. Then
\begin{align*}
&2^{(m-2)\delta}\sum_{n=m-2}^\infty 2^{-n\delta}=C(\beta,p,\eta,c_R)(1-c_R)
\\&\qquad \le C(\beta,p,\eta,c_R,c_\mu)\sum_{n=m-2}^\infty 2^{-n \beta}\;\left(\vint_{B(x,2^{-n})}   \left(\sup_{k\in\Z} g_k(y)\right)^{p}\,d\mu(y)\right)^{1/p}\,.
\end{align*}
In particular, by comparing the above sums it follows that there exists $n\ge m-2$,
depending on $x$, such that
\[
\mu(B(x,2^{-n})) (2^{-n})^{-\beta \eta}\le C(\beta,p,\eta,c_R,c_\mu)2^{-\beta m(p-\eta)}\;\int_{B(x,2^{-n})}   \left(\sup_{k\in\Z} g_k(y)\right)^{p}\,d\mu(y).
\]
Write $r_x=2^{-n}$ and
$B_x=B(x,r_x)=B(x,2^{-n})$. Then the previous estimate gives
\[
\mu(B_x) r_x^{-\beta \eta}\le C(\beta,p,\eta,c_R,c_\mu)r^{\beta (p-\eta)}\;\int_{B_x}   \left(\sup_{k\in\Z} g_k(y)\right)^{p}\,d\mu(y).
\]

By the $5r$-covering lemma \cite[Lemma~1.7]{MR2867756}, we obtain points $x_\ell\in E$,
$\ell\in\N$, such that the balls $B_{x_\ell}\subset \Lambda B$ 
with radii $r_{x_\ell}\le \Lambda r$ are
pairwise disjoint and 
$E\subset \bigcup_{\ell=1}^\infty 5B_{x_\ell}$. Hence,
\begin{align*}
\mathcal{H}^{\mu,\beta\eta}_{5\Lambda r}(E) &\le \sum_{\ell=1}^\infty  \mu(5B_{x_\ell})
(5r_{x_\ell})^{-\beta \eta}%
 \le  Cr^{\beta (p-\eta)}\;\sum_{\ell=1}^\infty\int_{B_{x_\ell}}   \left(\sup_{k\in\Z} g_k(y)\right)^{p}\,d\mu(y)\\
 &\le 
    Cr^{\beta (p-\eta)}\;\int_{\Lambda B}  \left(\sup_{k\in\Z} g_k(y)\right)^{p}\,d\mu(y)\,.
 \end{align*}
The desired inequality follows by taking infimum over all functions
$g$ and $\varphi$ as above.
\end{proof}

\section{Discrete potential operator}\label{s.discr_pot}

In this section we introduce and study a discrete potential operator $\Lop_\beta$.
The main importance of this operator is that it majorizes the potential $\Hop_\beta$
that is used in the next section to define the Triebel--Lizorkin $(\beta,p,q)$-capacity $\Cp^{\beta}_{p,q}$;
see Definitions~\ref{d.hbeta} and~\ref{d.cpq}. 
Moreover, due to the Lipschitz-continuity of the bump functions $\psi_{n,i}$
that are used in the definition of $\Lop_\beta$, this operator has better 
smoothing properties than $\Hop_\beta$ whose definition is based on 
characteristic functions of  balls. In particular, 
Lemma~\ref{l.pot_norm_est} seem not to be available for $\Hop_\beta$.

We begin with a construction of a partition of unity, following the
ideas from~\cite{MR1954868} and \cite{MR2747071}.
Let $n\in\Z$ be such that $n\ge n_0$;
recall that $2^{-n_0}$ is essentially $\diam(X)$. 
By a maximal packing argument~\cite[p.~101]{MR1800917} 
and using the separability of $X$, we may select a %
countable number of points $x_{n,i}\in X$, $i\in P_n$, 
such that the balls $B(x_{n,i},2^{-n-1})$, $i\in P_n$, are pairwise disjoint, while
\begin{equation}\label{e.coverX}
X = \bigcup_{i\in P_n} B(x_{n,i},2^{-n}).
\end{equation}
Since $X$ is equipped with a doubling measure $\mu$, the
balls $B(x_{n,i}, 6\cdot 2^{-n})$ have a finite overlapping property, 
that is, there exists a~constant $N=C(c_\mu)$ such that
\begin{equation}\label{e.overlap}
\sum_{i\in P_n} \mathbf{1}_{B(x_{n,i}, 6\cdot 2^{-n})} \leq N.
\end{equation}
In addition, there are constants $\kappa=C(c_\mu)$ and $L=C(c_\mu)$ and
a family of functions $\psi_{n,i}$, $i\in P_n$, 
with the following properties: 
$0\le \psi_{n,i}\le 1$, 
$\psi_{n,i}=0$ in $X\setminus B(x_{n,i},6\cdot 2^{-n})$, 
$\psi_{n,i}\ge \kappa$ in $B(x_{n,i}, 3\cdot 2^{-n})$, $\psi_{n,i}$ is $2^nL$-Lipschitz in $X$, and
\[
\sum_{i\in P_n} \psi_{n,i}(x)=1
\]
for every $x\in X$.

The family $\psi_{n,i}$, $i\in P_n$, is called a partition of unity subordinate to the cover $B(x_{n,i},2^{-n})$.
We keep these families, for $n\ge n_0$, fixed throughout the rest of the paper, starting from the
following definition.

\begin{definition}\label{d.dpotential}
Let $\beta >0$ and let $f=(f_n)_{n=n_0}^\infty$ be a sequence of nonnegative Borel
functions in $X$. 
The discrete potential $\Lop_\beta f$ is defined by
\[
\Lop_\beta f(x)=\sum_{n=n_0}^{\infty} 2^{-\beta n} \sum_{i\in P_n}\psi_{n,i}(x)\vint_{B(x_{n,i},3\cdot 2^{-n})} f_n(y)\,d\mu(y)\,,\qquad x\in X\,.
\]
\end{definition}

The closely related discrete maximal operator is studied for instance in~\cite{MR2747071,MR3102566}.
We also mention that a median type approach to discrete potential operators appears in~\cite{MR3605979}, 
where approximation properties of 
Haj{\l}asz--Triebel--Lizorkin functions are studied in the full range $0<p,q<\infty$.

The following lemma allows us to control the Haj{\l}asz--Triebel--Lizorkin seminorm
of the discrete potential $\Lop_\beta f$ in terms of a mixed norm of $f$.

\begin{lemma}\label{l.pot_norm_est}
Let $1<p<\infty$, $1<q\le\infty$, and $0<\beta <1$, 
and assume that $f=(f_n)_{n=n_0}^\infty$ is a sequence of nonnegative Borel functions in $X$. 
Then there exists a constant $C=C(c_\mu,\beta,p,q)$  such that
\[
\lvert \Lop_\beta f\rvert_{\dot{M}^{\beta}_{p,q}(X)}\le C\lVert f\rVert_{L^p(X;l^q)}\,.
\]
\end{lemma}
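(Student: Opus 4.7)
The plan is to construct an explicit fractional $\beta$-Haj{\l}asz gradient $(g_k)_{k\in\Z}$ of $u := \Lop_\beta f$ and then estimate its $L^p(X;l^q)$-norm via Young's convolution inequality and the Fefferman--Stein inequality \eqref{e.fs}. Writing $u(x) = \sum_{n\ge n_0} 2^{-\beta n} v_n(x)$ with $v_n(x) := \sum_{i\in P_n} \psi_{n,i}(x) F_{n,i}$ and $F_{n,i} := \vint_{B(x_{n,i}, 3\cdot 2^{-n})} f_n\, d\mu$, I will first record two pointwise estimates. The size bound $v_n(x)\le C M^* f_n(x)$ follows from the finite-overlap property \eqref{e.overlap}, the inclusion $B(x_{n,i},3\cdot 2^{-n})\subset B(x,9\cdot 2^{-n})$ whenever $\psi_{n,i}(x)\neq 0$, and doubling. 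The Lipschitz-type bound
\[
|v_n(x)-v_n(y)| \le C\, 2^n d(x,y)\bigl(M^* f_n(x)+M^* f_n(y)\bigr)
\]
is obtained by writing $v_n(x)-v_n(y) = \sum_i (\psi_{n,i}(x)-\psi_{n,i}(y)) F_{n,i}$, invoking the $2^n L$-Lipschitz property of each $\psi_{n,i}$, noting that at most $2N$ indices contribute by \eqref{e.overlap}, and applying the size bound to $F_{n,i}$ at $x$ or $y$ accordingly.

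Given $x\neq y$ in $X$, let $k\in\Z$ be the unique integer with $2^{-k-1}\le d(x,y)<2^{-k}$. Next I split $\sum_n 2^{-\beta n}(v_n(x)-v_n(y))$ at $n=k$: for $n<k$ the Lipschitz bound applies, while for $n\ge k$ I use the trivial estimate $|v_n(x)-v_n(y)|\le v_n(x)+v_n(y)$ together with the size bound. Factoring out $d(x,y)^\beta$ via the relations $d(x,y)\cdot 2^{n(1-\beta)}\le d(x,y)^\beta\cdot 2^{(n-k)(1-\beta)}$ (when $n<k$) and $2^{-\beta n}\le 2^\beta d(x,y)^\beta\cdot 2^{-(n-k)\beta}$ (when $n\ge k$), the two parts combine to give $|u(x)-u(y)|\le d(x,y)^\beta\bigl(g_k(x)+g_k(y)\bigr)$ with
\[
g_k(x) := C\sum_{n\ge n_0} a_{k-n}\, M^* f_n(x), \qquad a_j := \begin{cases} 2^{-j(1-\beta)}, & j>0,\\ 2^{j\beta}, & j\le 0.\end{cases}
\]
Consequently $(g_k)_{k\in\Z}\in \mathbb{D}^\beta_X(u)$.

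Since $0<\beta<1$, the sequence $(a_j)_{j\in\Z}$ lies in $\ell^1(\Z)$ with $\|a\|_{\ell^1}$ depending only on $\beta$. Extending $f_n\equiv 0$ for $n<n_0$, Young's convolution inequality yields $\bigl\|(g_k(x))_{k\in\Z}\bigr\|_{l^q(\Z)}\le C\bigl\|(M^*f_n(x))_{n\in\Z}\bigr\|_{l^q(\Z)}$ for each $x\in X$. Taking the $L^p(X)$-norm and applying the Fefferman--Stein inequality \eqref{e.fs} then gives $\|g\|_{L^p(X;l^q)}\le C\|f\|_{L^p(X;l^q)}$, which yields the claim. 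The delicate point is the Lipschitz estimate for $v_n$: the scale factor $2^n$ coming from the Lipschitz constants of the bump functions must be balanced against the smoothing weight $2^{-\beta n}$, producing two geometric series that converge thanks to $1-\beta>0$ (low frequencies) and $\beta>0$ (high frequencies); both assumptions on $\beta$ are therefore used in an essential way.
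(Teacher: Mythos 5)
Your proof is correct and follows essentially the same route as the paper's: split the frequency sum at $n=k$, use the Lipschitz constant $2^nL$ of the bumps for $n<k$ and the trivial size bound with doubling for $n\ge k$, recognize the resulting majorant as a discrete convolution $a\star(M^*f_n)_n$ with $\|a\|_{\ell^1}<\infty$ (using $0<\beta<1$ for both tails), and close with Young's inequality plus the Fefferman--Stein maximal inequality \eqref{e.fs}. The only cosmetic difference is that you package the intermediate estimates as two standalone pointwise bounds on $v_n$, whereas the paper carries the case analysis on $\psi_{n,i}(x)\neq 0$ versus $\psi_{n,i}(y)\neq 0$ inline.
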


\begin{proof}
We may clearly assume that $\lVert f\rVert_{L^p(X;l^q)}<\infty$.
If $X$ is bounded, then we extend $f$ as a sequence $(f_n)_{n\in\Z}$ by
setting $f_n=0$ if $n<n_0$. 
Define $g=(g_k)_{k\in \Z}$, where
\[
g_k(x)=Nc_\mu 2^\beta \sum_{n=k}^\infty 2^{\beta (k-n)}M^*f_n(x) +  LNc_\mu \sum_{n=-\infty}^{k-1} 2^{(\beta-1)(k-n)}
M^*f_n(x)
\]
for every $x\in X$ and $k\in\Z$, 
and $N=C(c_\mu)$ and $L=C(c_\mu)$ are the constants
from the definition of the partitions of unity $\psi_{n,i}$.
We aim to show
that $g\in \mathbb{D}_X^\beta(\Lop_\beta f)$.
For this purpose we fix $k\in\Z$ and $x,y\in X$ such that $2^{-k-1}\le d(x,y)<2^{-k}$. 
Observe that $k\ge n_0$. 

Write $B_{n,i}=B(x_{n,i},2^{-n})$. From the definition of $\Lop_\beta$ we obtain
\begin{equation}\label{e.h1}
\lvert \Lop_\beta f(x)-\Lop_\beta f(y)\rvert \le  
\sum_{n=n_0}^\infty 2^{-\beta n}\sum_{i\in P_n}\lvert \psi_{n,i}(x)-\psi_{n,i}(y)\rvert\vint_{3B_{n,i}} f_n(z)\,d\mu(z)\,.
\end{equation}
We split the outer summation in two parts. First,
\begin{equation}\label{e.h2}
\begin{split}
&\sum_{n=k}^\infty 2^{-\beta n}\sum_{i\in P_n}\lvert \psi_{n,i}(x)-\psi_{n,i}(y)\rvert\vint_{3B_{n,i}} f_n(z)\,d\mu(z)\\
&\qquad \le 2^{-k\beta }\sum_{n=k}^\infty 2^{\beta (k-n)}\sum_{i\in P_n}(\psi_{n,i}(x)+\psi_{n,i}(y))\vint_{3B_{n,i}} f_n(z)\,d\mu(z)\\
&\qquad \le c_\mu 2^\beta d(x,y)^\beta\sum_{n=k}^\infty 2^{\beta (k-n)}
\sum_{i\in P_n}\bigl(\mathbf{1}_{6B_{n,i}}(x)+\mathbf{1}_{6B_{n,i}}(y)\bigr)\vint_{6B_{n,i}} f_n(z)\,d\mu(z)\\
&\qquad \le c_\mu 2^\beta d(x,y)^\beta\sum_{n=k}^\infty 2^{\beta (k-n)}
\sum_{i\in P_n}\bigl(\mathbf{1}_{6B_{n,i}}(x)M^*f_n(x)+\mathbf{1}_{6B_{n,i}}(y)M^*f_n(y)\bigr)\\
&\qquad \le Nc_\mu 2^\beta d(x,y)^\beta\sum_{n=k}^\infty 2^{\beta (k-n)}(M^*f_n(x)+M^*f_n(y))\,.
\end{split}
\end{equation}
Next we assume that $n_0\le n\le k-1$ and $i\in P_n$. 
If $\psi_{n,i}(x)\not=0$, then we have
\begin{align*}
&\lvert \psi_{n,i}(x)-\psi_{n,i}(y)\rvert\vint_{3B_{n,i}} f_n(z)\,d\mu(z)\\
&\qquad \le 2^n L d(x,y) \mathbf{1}_{6B_{n,i}}(x)c_\mu\vint_{6B_{n,i}} f_n(z)\,d\mu(z)\\
&\qquad \le 2^n L c_\mu d(x,y)^\beta 2^{-k(1-\beta)} \mathbf{1}_{6B_{n,i}}(x)M^*f_n(x)\,.
\end{align*}
On the other hand, if $\psi_{n,i}(y)\not=0$, then a similar computation shows that
\begin{align*}
&\lvert \psi_{n,i}(x)-\psi_{n,i}(y)\rvert\vint_{3B_{n,i}} f_n(z)\,d\mu(z)
\le 2^n Lc_\mu d(x,y)^\beta 2^{-k(1-\beta)} \mathbf{1}_{6B_{n,i}}(y)M^*f_n(y)\,.
\end{align*}
Thus in any case we have
\begin{equation}\label{e.h3}
\begin{split}
&\sum_{n=n_0}^{k-1} 2^{-\beta n}\sum_{i\in P_n}\lvert \psi_{n,i}(x)-\psi_{n,i}(y)\rvert\vint_{3B_{n,i}} f_n(z)\,d\mu(z)\\
&\quad \le 
 Lc_\mu d(x,y)^\beta \sum_{n=n_0}^{k-1} 2^{-\beta n+n-k(1-\beta)}\sum_{i\in P_n}
(\mathbf{1}_{6B_{n,i}}(x)M^*f_n(x)+\mathbf{1}_{6B_{n,i}}(y)M^*f_n(y))\\
&\quad \le 
 LNc_\mu d(x,y)^\beta \sum_{n=-\infty}^{k-1} 2^{(\beta-1)(k-n)}
(M^*f_n(x)+M^*f_n(y))\,.
\end{split}
\end{equation}
By combining inequalities \eqref{e.h1}, \eqref{e.h2} and \eqref{e.h3}, it follows that 
\[
\lvert \Lop_\beta f(x)-\Lop_\beta f(y)\rvert \le  d(x,y)^\beta(g_k(x)+g_k(y))\,,
\]
and therefore
$g\in\mathbb{D}_X^\beta(\Lop_\beta f)$.

Next, define $a=(a_k)_{k\in \Z}$, where
$a_k=Nc_\mu 2^\beta\cdot 2^{\beta k}$  if $k\le 0$ and 
$a_k=LNc_\mu 2^{(\beta-1)k}$ if $k> 0$.
Observe that $(g_k(x))_{k\in\Z}=a\star (M^*f_n(x))_{n\in\Z}$, where
$\star$ designates the discrete convolution
\[
\bigl(a\star (M^*f_n(x))_{n\in\Z}\bigr)_k
=\sum_{n\in\Z} a_{k-n}M^* f_n(x)\,,\qquad k\in\Z\,.
\]
Young's convolution inequality \cite[Corollary 20.14]{MR551496} implies for every $x\in X$ that
\begin{align*}
\big\lVert (g_k(x))_{k\in\Z}\big\rVert_{l^q(\Z)}
&=\big\lVert a\star (M^*f_n(x))_{n\in\Z}\big\rVert_{l^q(\Z)}\\
&\le \lVert a\rVert_{l^1(\Z)}\big\lVert (M^*f_n(x))_{n\in\Z}\big\rVert_{l^q(\Z)}\\
&\le C(L,N,c_\mu,\beta)\big\lVert (M^*f_n(x))_{n\in\Z}\big\rVert_{l^q(\Z)}\,.
\end{align*}
Hence, we see that
\[
\lvert \Lop_\beta f\rvert_{\dot{M}^{\beta}_{p,q}(X)}
\le \lVert (g_k)_{k\in\Z}\rVert_{L^p(X;l^q(\Z))}
\le C(L,N,c_\mu,\beta)\big\lVert (M^*f_n)_{n\in\Z}\big\rVert_{L^p(X;l^q(\Z))}\,.
\]
By the Fefferman--Stein vector-valued maximal function inequality \eqref{e.fs}, we obtain
\begin{align*}
\big\lVert (M^*f_n)_{n\in\Z}\big\rVert_{L^p(X;l^q(\Z))}
&=\Biggl(\int_X \Biggl(\sum_{n\in\Z} \left(M^*f_n(x)\right)^q\Biggr)^{p/q}\,d\mu(x)\Biggl)^{1/p}\\
&\le C(c_\mu,p,q)\Biggl(\int_X \Biggl(\sum_{n\in\Z} f_n(x)^q\Biggl)^{p/q}\,d\mu(x)\Biggl)^{1/p}\\
&=C(c_\mu,p,q)\lVert (f_n)_{n\in\Z}\rVert_{L^p(X;l^q(\Z))}
\end{align*}
(with the usual modification if $q=\infty$).
This concludes the proof, since the constants $L$ and $N$ only depend on $c_\mu$. 
\end{proof}

The following lemma gives for the discrete potential operator $\Lop_\beta$ a local embedding, 
which can be viewed as a Sobolev type inequality.

\begin{lemma}\label{l.pot_poincare}
Let $1<p<\infty$, $1<q\le \infty$, and $\beta>0$.
Assume that $\mu$ satisfies the quantitative reverse doubling condition~\eqref{e.reverse_doubling}
for some exponent  $\sigma>\beta p$, and
let $f=(f_n)_{n=n_0}^\infty$ be a sequence of nonnegative Borel functions in $X$. 
Then there exists
a constant  $C=C(c_\sigma,p,q,\sigma,\beta,c_\mu)$  such that
\[
\left(\int_{B(x_0,r)} (\Lop_\beta f(x))^p\,d\mu(x)\right)^{1/p} \le Cr^{\beta}\lVert f\rVert_{L^p(X;l^q)}
\]
for every $x_0\in X$ and $r>0$.
\end{lemma}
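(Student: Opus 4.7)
The plan is to derive a pointwise bound on $\Lop_\beta f(x)$ valid for $x\in B(x_0,r)$, split the resulting sum at the scale $r$, and treat the two halves separately. First, whenever $\psi_{n,i}(x)\ne 0$ we have $x\in B(x_{n,i},6\cdot 2^{-n})$, so $B(x_{n,i},3\cdot 2^{-n})\subset B(x,9\cdot 2^{-n})$; by doubling $\mu(B(x_{n,i},3\cdot 2^{-n}))$ is comparable to $\mu(B(x,9\cdot 2^{-n}))$, and combined with $\sum_{i\in P_n}\psi_{n,i}(x)=1$ this yields
\[
\Lop_\beta f(x)\le C\sum_{n=n_0}^\infty 2^{-\beta n}\vint_{B(x,9\cdot 2^{-n})}f_n(y)\,d\mu(y).
\]
I would then fix $k\in\Z$ with $2^{-k-1}<r\le 2^{-k}$ and split at $n=k$.

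For the \emph{small scales} $n\ge k$, bound the ball-average by $C M^*f_n(x)$ and apply H\"older in the $n$-sum (with $1/q+1/q'=1$, and $q'=1$ when $q=\infty$) to get
\[
\sum_{n\ge k}2^{-\beta n}M^*f_n(x)\le C(\beta,q)\,2^{-\beta k}\bigl\|(M^*f_n(x))_n\bigr\|_{l^q}\le C r^\beta\bigl\|(M^*f_n(x))_n\bigr\|_{l^q}.
\]
Raising to the $p$-th power, integrating over $B(x_0,r)\subset X$, and invoking the Fefferman--Stein inequality~\eqref{e.fs} (which is allowed because $p,q>1$) gives the contribution $Cr^{\beta p}\|f\|_{L^p(X;l^q)}^p$.

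For the \emph{large scales} $n_0\le n<k$ we have $2^{-n}\ge 2r$, so for every $x\in B(x_0,r)$, $B(x,9\cdot 2^{-n})\subset B(x_0,10\cdot 2^{-n})$; by doubling the ball-average is dominated by the $x$-independent quantity $C\vint_{B(x_0,10\cdot 2^{-n})}f_n$. H\"older's inequality and the pointwise bound $f_n\le\|f\|_{l^q}$ then yield
\[
\vint_{B(x_0,10\cdot 2^{-n})}f_n\le \mu(B(x_0,10\cdot 2^{-n}))^{-1/p}\|f\|_{L^p(X;l^q)}.
\]
Applying the quantitative reverse doubling~\eqref{e.reverse_doubling} (with the radius capped at $2\diam(X)$ in the bounded case, which affects only constants) gives
\[
\mu(B(x_0,10\cdot 2^{-n}))^{-1/p}\le C\bigl(r/2^{-n}\bigr)^{\sigma/p}\mu(B(x_0,r))^{-1/p},
\]
so the large-scale sum is dominated by
\[
C\mu(B(x_0,r))^{-1/p}\|f\|_{L^p(X;l^q)}\,r^{\sigma/p}\sum_{n<k}2^{n(\sigma/p-\beta)}.
\]
Since $\sigma>\beta p$, the geometric series has ratio $>1$ and sums to $\le C\cdot 2^{k(\sigma/p-\beta)}\le Cr^{\beta-\sigma/p}$, giving an $x$-independent bound of $C\mu(B(x_0,r))^{-1/p}r^\beta\|f\|_{L^p(X;l^q)}$; raising to the $p$-th power and integrating the constant over $B(x_0,r)$ absorbs $\mu(B(x_0,r))^{-1}\cdot\mu(B(x_0,r))$ and again produces $Cr^{\beta p}\|f\|_{L^p(X;l^q)}^p$. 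Combining the two contributions finishes the proof.

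The main obstacle is precisely the large-scale estimate: the series $\sum_{n<k}2^{n(\sigma/p-\beta)}$ only converges thanks to the assumption $\sigma>\beta p$, and it is this balance together with reverse doubling that recovers the correct $r^\beta$ gain. The small-scale part is routine once Fefferman--Stein is available, and the pointwise reduction from $\Lop_\beta$ to centered ball averages at $x$ uses only the geometric properties of the partition of unity together with doubling.
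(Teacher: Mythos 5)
Your proposal is correct, and the overall architecture (split the dyadic sum at the scale $2^{-k}\approx r$, treat small scales via Fefferman--Stein, treat large scales via quantitative reverse doubling) matches the paper's proof, which decomposes $\Lop_\beta f=F+G$ in exactly this way. Your small-scale estimate is essentially identical to the paper's treatment of $F$. The large-scale estimate, however, is handled by a genuinely different mechanism. The paper keeps the integrand in play: it multiplies by the factor $\bigl(c_\sigma 2^{\sigma(n-k)}\mu(B(x_0,2^{-n}))/\mu(B(x_0,2^{-k}))\bigr)^{1/p}\ge 1$, applies H\"older in $n$ with exponents $p,p'$ splitting the geometric weight $2^{(\sigma/p-\beta)(n-k)}$, replaces the ball average by $\vint_{B(x_0,10\cdot2^{-n})}f_n^p$, and only then integrates over $B(x_0,r)$, at which point the measure factors $\mu(B(x_0,r))\cdot\mu(B(x_0,2^{-n}))^{-1}$ telescope. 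You instead estimate each ball average \emph{pointwise} by $\mu(B(x_0,10\cdot2^{-n}))^{-1/p}\lVert f\rVert_{L^p(X;l^q)}$ via a single application of H\"older, obtain an $x$-independent bound on the large-scale tail of $\Lop_\beta f(x)$, and then integrate a constant over $B(x_0,r)$. Your route is more elementary and eliminates a second Fubini-type rearrangement; it gives up nothing here because the large-scale part is inherently an $L^\infty$-type estimate, and the quantitative reverse doubling (with $\sigma>\beta p$) is still what makes the geometric series $\sum_{n<k}2^{n(\sigma/p-\beta)}$ converge to $\approx r^{\beta-\sigma/p}$. One small remark: your initial reduction of $\Lop_\beta f(x)$ to a centered dyadic average $\vint_{B(x,9\cdot2^{-n})}f_n$ using $\sum_i\psi_{n,i}=1$ and doubling is cleaner than the paper's direct manipulation with the finite-overlap constant $N$, and it is valid. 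The caveat about capping the radius at $2\diam(X)$ in the bounded case is indeed only a matter of constants, as you note; alternatively one can simply use $\mu(B(x_0,10\cdot2^{-n}))\ge\mu(B(x_0,2^{-n}))$ and then apply \eqref{e.reverse_doubling} with radii $r<2^{-n}\le 2^{-n_0}\le2\diam(X)$, which stays within the admissible range.
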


\begin{proof}
Fix $x_0\in X$ and $r>0$.
If $X$ is bounded, then we may clearly assume 
that $0<r\le 2^{-n_0}$. 
Let $k\in\mathbb{Z}$ be such that $2^{-k-1}< r\le 2^{-k}$
and let $\psi_{n,i}$ be as in the definition of $\Lop_\beta$. 
We write $B_{n,i}=B(x_{n,i},2^{-n})$
and $\Lop_\beta f=F+G$, where
\[
F(x)=\sum_{n=k}^\infty 2^{-\beta n} \sum_{i\in P_n}\psi_{n,i}(x)\vint_{3B_{n,i}} f_n(y)\,d\mu(y)
\]
and
\[
G(x)=\sum_{n=n_0}^{k-1} 2^{-\beta n} \sum_{i\in P_n}\psi_{n,i}(x)\vint_{3B_{n,i}} f_n(y)\,d\mu(y)
\]
for every $x\in X$.

Fix a point $x\in B(x_0,r)$.  We estimate $F(x)$ by using H\"older's inequality, with exponents $q$ and $q'=\frac{q}{q-1}$, 
\begin{align*}
F(x)&=2^{-\beta k}\sum_{n=k}^\infty 2^{-\beta(n-k)} \sum_{i\in P_n}\psi_{n,i}(x)\vint_{3B_{n,i}} f_n(y)\,d\mu(y)\\
&\le 2^{-\beta k}\left(\sum_{n=k}^\infty 2^{-\beta(n-k)q'}\right)^{1/q'}
  \left(\sum_{n=k}^\infty\left(\sum_{i\in P_n}\psi_{n,i}(x)\vint_{3B_{n,i}} f_n(y)\,d\mu(y)\right)^q\right)^{1/q}\\
&\le C(q,\beta,c_\mu) 2^{-\beta k} \left(\sum_{n=k}^\infty
\left(\sum_{i\in P_n}\mathbf{1}_{6B_{n,i}}(x) \vint_{6B_{n,i}} f_n(y)\,d\mu(y)\right)^q\right)^{1/q}\\
&\le 
C(q,\beta,c_\mu,N) 2^{-\beta k} \left(\sum_{n=k}^\infty\left(M^*f_n(x)\right)^q\right)^{1/q}\,,
\end{align*}
where $N=C(c_\mu)$ is the constant from~\eqref{e.overlap}.
By the Fefferman--Stein vector-valued maximal function inequality \eqref{e.fs}, we obtain
\begin{equation}\label{e.f1}
\begin{split}
\int_{B(x_0,r)} (F(x))^p\,d\mu(x)&\le C(p,q,\beta,c_\mu,N) 2^{-\beta pk}  \int_{X} \left(\sum_{n=k}^\infty\left(M^*f_n(x)\right)^q\right)^{p/q}\,d\mu(x)\\
&\le C(p,q,\beta,c_\mu,N) 2^{-\beta pk} \int_{X} \left(\sum_{n=k}^\infty f_n(x)^q\right)^{p/q}\,d\mu(x)
\end{split}
\end{equation}
(with an obvious modification if $q=\infty$).

On the other hand, from the quantitative reverse doubling condition~\eqref{e.reverse_doubling} with exponent $\sigma>\beta p$, it follows that
\[
1\le c_\sigma\frac{\mu(B(x_0,2^{-n}))}{\mu(B(x_0,2^{-k}))}\left(\frac{2^{-k}}{2^{-n}}\right)^\sigma
=c_\sigma2^{\sigma(n-k)}\frac{\mu(B(x_0,2^{-n}))}{\mu(B(x_0,2^{-k}))}
\]
whenever  $n_0\le n\le k-1$. 
Fix $x\in B(x_0,r)$. We estimate $G(x)$ as follows
\begin{align*}
G(x)&=2^{-k\beta}\sum_{n=n_0}^{k-1} 2^{-\beta (n-k)} \sum_{i\in P_n}\psi_{n,i}(x)\vint_{B(x_{n,i},3\cdot 2^{-n})} f_n(y)\,d\mu(y) \\
&\le c_\sigma^{1/p} 2^{-k\beta}\sum_{n=n_0}^{k-1} 2^{(\sigma/p-\beta) (n-k)}\left(\frac{\mu(B(x_0,2^{-n}))}{\mu(B(x_0,2^{-k}))}\right)^{1/p} 
\sum_{i\in P_n}\psi_{n,i}(x)\vint_{3B_{n,i}} f_n(y)\,d\mu(y)\,.
\end{align*}
Writing $2^{(\sigma/p-\beta) (n-k)}=2^{(\sigma/p-\beta) (n-k)/p'}2^{(\sigma/p-\beta) (n-k)/p}$
and using 
H\"older's inequality, we have
\begin{equation}\label{e.g1}
\begin{split}
G(x)&\le C(c_\sigma,p)2^{-k\beta}\left(\sum_{n=n_0}^{k-1} 2^{(\sigma/p-\beta) (n-k)}\right)^{1/p'}\\
&\qquad  \times \left(\sum_{n=n_0}^{k-1} 2^{(\sigma/p-\beta) (n-k)}\frac{\mu(B(x_0,2^{-n}))}{\mu(B(x_0,2^{-k}))} \left(\sum_{i\in P_n}\psi_{n,i}(x)\vint_{3B_{n,i}} f_n(y)\,d\mu(y)\right)^p\right)^{1/p}\,.
\end{split}
\end{equation}
Since $n_0\le n\le k-1$ and $x\in B(x_0,r)$ with $r\le 2^{-k}$, we obtain
\begin{equation}\label{e.g2}
\begin{split}
&\left(\sum_{i\in P_n}\psi_{n,i}(x)\vint_{3B_{n,i}} f_n(y)\,d\mu(y)\right)^p\\
&\qquad\qquad \le \left(\sum_{i\in P_n}\mathbf{1}_{6B_{n,i}}(x)\vint_{3B_{n,i}} f_n(y)\,d\mu(y)\right)^p\\
& \qquad\qquad \le C(c_\mu,p)\left(\sum_{i\in P_n}\mathbf{1}_{6B_{n,i}}(x)\vint_{B(x_0,10\cdot 2^{-n})} f_n(y)\,d\mu(y)\right)^p\\
&\qquad\qquad \le C(c_\mu,p,N)\left(\vint_{B(x_0,10\cdot 2^{-n})} f_n(y)\,d\mu(y)\right)^p\\
&\qquad\qquad \le C(c_\mu,p,N)\vint_{B(x_0,10\cdot 2^{-n})} f_n(y)^p\,d\mu(y)\,.
\end{split}
\end{equation}
By combining the  estimates \eqref{e.g1} and \eqref{e.g2}, we see that
\begin{align*}
G(x)\le C(c_\sigma,p,\sigma,\beta,c_\mu,N)2^{-k\beta}
\left(\sum_{n=n_0}^{k-1}\frac{2^{(\sigma/p-\beta) (n-k)}}{\mu(B(x_0,2^{-k}))} \int_{B(x_0,10\cdot 2^{-n})} f_n(y)^p\,d\mu(y)\right)^{1/p}\,.
\end{align*}
It follows that
\begin{equation}\label{e.g3}
\begin{split}
\int_{B(x_0,r)} & G(x)^p\,d\mu(x)\\
&\le C(c_\sigma,p,\sigma,\beta,c_\mu,N)2^{-kp\beta}
\sum_{n=n_0}^{k-1}2^{(\sigma/p-\beta) (n-k)}\int_{B(x_0,10\cdot 2^{-n})} f_n(y)^p\,d\mu(y)\\
&\le C(c_\sigma,p,\sigma,\beta,c_\mu,N)2^{-kp\beta}
\sum_{n=n_0}^{k-1}2^{(\sigma/p-\beta) (n-k)}\int_{X} \left(\sum_{m=n_0}^{k-1} f_m(y)^q\right)^{p/q}\,d\mu(y)\\
&\le C(c_\sigma,p,\sigma,\beta,c_\mu,N)2^{-kp\beta}
\int_{X} \left(\sum_{n=n_0}^{k-1} f_n(y)^q\right)^{p/q}\,d\mu(y)
\end{split}
\end{equation}
(with an obvious modification if $q=\infty$).

By combining the estimates \eqref{e.f1} and \eqref{e.g3}, we get
\begin{align*}
\int_{B(x_0,r)} (\Lop_\beta f(x))^p\,d\mu(x)&\le 2^{p-1}\int_{B(x_0,r)}  (F(x))^p\,d\mu(x)+2^{p-1}\int_{B(x_0,r)}  (G(x))^p\,d\mu(x)\\
&\le C(c_\sigma,p,q,\sigma,\beta,c_\mu,N)2^{-kp\beta}
\int_{X} \left(\sum_{n=n_0}^{\infty} f_n(y)^q\right)^{p/q}\,d\mu(y)\\
&\le C(c_\sigma,p,q,\sigma,\beta,c_\mu,N)r^{\beta p} \lVert f\rVert_{L^p(X;l^q)}^p\,.
\end{align*}
This concludes the proof since $N=C(c_\mu)$.  
\end{proof}

\section{Capacity with respect to a potential}\label{s.pot_cap}

Next we define the potential $\Hop_\beta$ and the 
associated Triebel--Lizorkin capacity $\Cp^\beta_{p,q}$.
The main goal in this section is to show that the
capacities $\Cp^\beta_{p,q}$ and $\capm{\beta}{p}{q}$ are equivalent,
as was stated in Theorem~\ref{t.equiv_intro} in the introduction.
The proof of Theorem~\ref{t.equiv_intro} follows by combining
Theorems~\ref{t.equiv_bp} and~\ref{t.comparison_dc} that are
presented at the end of this section.

The discrete potential operator $\Lop_\beta$ serves as an important technical tool in the proof of
Theorem~\ref{t.equiv_bp}. More precisely, in Lemmas~\ref{l.disc_est} and~\ref{l.hbest} we connect
$\Lop_\beta$ with the potential $\Hop_\beta$ and the capacity $\capm{\beta}{p}{q}$, respectively,
and these two elements are then brought together in the proof of Theorem~\ref{t.equiv_bp}. 
On the other hand, Theorem~\ref{t.comparison_dc} follows by a much more 
direct argument.

\begin{definition}\label{d.hbeta}
Let $\beta >0$ and let $f=(f_n)_{n=n_0}^\infty$ be a sequence of nonnegative Borel functions in $X$. The potential $\Hop_\beta f$ is defined by
\[
\Hop_\beta f(x)= \sum_{n=n_0}^\infty  2^{-\beta n}\int_{X} \frac{\mathbf{1}_{B(y,2^{-n})}(x)}{\mu(B(y,2^{-n}))} f_n(y)\,d\mu(y)\,,\qquad x\in X\,.
\]
\end{definition}

We use potentials $\Hop_\beta$ to define the capacity $\Cp^\beta_{p,q}$,
by adapting the treatment in \cite[Section 4.4]{MR1411441} to the metric space setting.
This approach is a Triebel--Lizorkin variant of the so-called Meyer's theory of $L^p$-capacities, 
that was initially developed in~\cite{MR277741}. 
See also~\cite[Section 3.2]{MR1774162} for a clear presentation of the $L^p$-theory
in (weighted) Euclidean spaces.

\begin{definition}\label{d.cpq}
Let $1<p<\infty$, $1\le q\le \infty$, and $\beta>0$. The $(\beta,p,q)$-capacity of a set $E\subset X$ is defined by
\[
\Cp^\beta_{p,q}(E) = \inf\bigl\{\lVert f\rVert_{L^p(X;l^q)}^p\,:\, 
f=(f_n)_{n=n_0}^\infty\ge 0\text{ and } \Hop_\beta f(x)\ge 1\text{ for all }x\in E \bigr\}\,,
\]
where $(f_n)_{n=n_0}^\infty\ge 0$ in the infimum means that
$(f_n)_{n=n_0}^\infty$  
is a sequence of nonnegative Borel functions in $X$.
\end{definition}

Observe that if $F\subset E\subset X$ and $1\le \tau\le q\le \infty$,
then 
\begin{equation}\label{e.Cap_E_comp}
\Cp^\beta_{p,q} (F)\le \Cp^\beta_{p,q} (E)\le   \Cp^{\beta}_{p,\tau} (E).
\end{equation}

 We have the following  pointwise comparison between
the potentials $\Hop_{\beta} f$ and $\Lop_\beta f$.

\begin{lemma}\label{l.disc_est}
Let $\beta>0$ and let $f=(f_n)_{n=n_0}^\infty$ be a sequence of nonnegative Borel functions in~$X$.  Then there
exists a constant $C=C(c_\mu)>0$ such that
\[
\Hop_{\beta} f(x)\le C\Lop_\beta f(x)
\]
for every $x\in X$. 
\end{lemma}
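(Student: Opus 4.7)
The plan is to establish the pointwise bound term-by-term in $n$, that is, to show there is a constant $C=C(c_\mu)$ such that
\[
2^{-\beta n}\int_{X} \frac{\mathbf{1}_{B(y,2^{-n})}(x)}{\mu(B(y,2^{-n}))} f_n(y)\,d\mu(y)
\le C\,2^{-\beta n} \sum_{i\in P_n}\psi_{n,i}(x)\vint_{B(x_{n,i},3\cdot 2^{-n})} f_n(y)\,d\mu(y)
\]
for each $n\ge n_0$ and each $x\in X$. Summing in $n$ then yields $\Hop_\beta f(x)\le C\,\Lop_\beta f(x)$.

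First I would symmetrize the kernel via $\mathbf{1}_{B(y,2^{-n})}(x)=\mathbf{1}_{B(x,2^{-n})}(y)$ and note that for $y\in B(x,2^{-n})$ we have $B(x,2^{-n})\subset B(y,2\cdot 2^{-n})$, whence $\mu(B(y,2^{-n}))\ge c_\mu^{-1}\mu(B(x,2^{-n}))$ by doubling. This immediately gives
\[
\int_{X} \frac{\mathbf{1}_{B(y,2^{-n})}(x)}{\mu(B(y,2^{-n}))} f_n(y)\,d\mu(y)\le c_\mu \vint_{B(x,2^{-n})} f_n(y)\,d\mu(y),
\]
reducing everything to comparing the mean of $f_n$ on $B(x,2^{-n})$ with the weighted sum on the right.

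Next I would exploit the covering property \eqref{e.coverX}: pick $j\in P_n$ with $x\in B(x_{n,j},2^{-n})$, so that $\psi_{n,j}(x)\ge \kappa$ by the defining property of the partition of unity, and also $B(x,2^{-n})\subset B(x_{n,j},3\cdot 2^{-n})\subset B(x,4\cdot 2^{-n})$. Doubling then gives $\mu(B(x_{n,j},3\cdot 2^{-n}))\le c_\mu^3\mu(B(x,2^{-n}))$, and hence
\[
\vint_{B(x,2^{-n})} f_n\,d\mu \le c_\mu^3\vint_{B(x_{n,j},3\cdot 2^{-n})} f_n\,d\mu \le \frac{c_\mu^3}{\kappa}\,\psi_{n,j}(x)\vint_{B(x_{n,j},3\cdot 2^{-n})} f_n\,d\mu.
\]
Since all terms in the sum over $i\in P_n$ are nonnegative, we can drop every index except $i=j$ and still obtain the desired inequality with $C=c_\mu^4/\kappa$, which depends only on $c_\mu$.

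No serious obstacle is expected; the only subtlety is choosing the index $j$ so that both $\psi_{n,j}(x)\ge \kappa$ holds and $B(x,2^{-n})$ sits inside $B(x_{n,j},3\cdot 2^{-n})$ with comparable measure. Both are guaranteed simultaneously by selecting $j$ so that $x\in B(x_{n,j},2^{-n})$, which is possible by \eqref{e.coverX}.
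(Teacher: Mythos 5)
Your proof is correct and takes essentially the same route as the paper: symmetrize the indicator, choose $j\in P_n$ with $x\in B(x_{n,j},2^{-n})$ via \eqref{e.coverX} so that $\psi_{n,j}(x)\ge\kappa$, compare averages over $B(x,2^{-n})$ and $B(x_{n,j},3\cdot 2^{-n})$ by doubling, and drop the other terms of the nonnegative sum. The paper merely collapses the two doubling comparisons into one (giving the slightly sharper constant $\kappa^{-1}c_\mu^3$ instead of your $\kappa^{-1}c_\mu^4$), which is immaterial.
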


\begin{proof}
Let $x\in X$, let $n\in\Z$ be such that $n\ge n_0$, and let
functions $\psi_{n,i}$, $i\in P_n$, be as in the definition
of $\Lop_\beta f$, see Definition~\ref{d.dpotential}.
By the covering property~\eqref{e.coverX},  
there exists $j\in P_n$ such that $x\in B(x_{n,j},2^{-n})$.
Then $B(x,2^{-n})\subset B(x_{n,j},3\cdot 2^{-n})\subset B(y,5\cdot 2^{-n})$ for every $y\in B(x,2^{-n})$.
Observe also that $\mathbf{1}_{B(y,2^{-n})}(x)=\mathbf{1}_{B(x,2^{-n})}(y)$ for every $y\in X$.
It follows that
\begin{align*}
2^{-\beta n}\int_{X} \frac{\mathbf{1}_{B(y,2^{-n})}(x)}{\mu(B(y,2^{-n}))} f_n(y)\,d\mu(y)
&\le  c_\mu^3 2^{-\beta n}\vint_{B(x_{n,j},3\cdot 2^{-n})} f_n(y)\,d\mu(y)\\
&\le  \kappa^{-1}c_\mu^3 2^{-\beta n} \psi_{n,j}(x)\vint_{B(x_{n,j},3\cdot 2^{-n})} f_n(y)\,d\mu(y)\\
&\le \kappa^{-1}c_\mu^3 2^{-\beta n}\sum_{i\in P_n} \psi_{n,i}(x)\vint_{B(x_{n,i},3\cdot 2^{-n})} f_n(y)\,d\mu(y)\,,
\end{align*}
where $\kappa=C(c_\mu)$ is as in the definition of the functions $\psi_{n,i}$.
By summing over all $n\ge n_0$, we find that
\begin{align*}
\mathcal{H}_\beta f(x)&=\sum_{n=n_0}^\infty2^{-\beta n}\int_{X} \frac{\mathbf{1}_{B(y,2^{-n})}(x)}{\mu(B(y,2^{-n}))}f_n(y)\,d\mu(y)
\\&\le \kappa^{-1}c_\mu^3 \sum_{n=n_0}^\infty2^{-\beta n}\sum_{i\in P_n} \psi_{n,i}(x)\vint_{B(x_{n,i},3\cdot 2^{-n})} f_n(y)\,d\mu(y)=\kappa^{-1}c_\mu^3\Lop_\beta f(x)\,,
\end{align*}
and this concludes the proof. 
\end{proof}

Next we connect the relative Haj{\l}asz--Triebel--Lizorkin capacity to
the $\Lop_\beta$ potentials.

\begin{lemma}\label{l.hbest}
Let $1<p<\infty$, $1<q\le \infty$, $0<\beta <1$, and $\Lambda\ge  2$.
Assume that $\mu$ satisfies the quantitative reverse doubling condition~\eqref{e.reverse_doubling} for some exponent  $\sigma>\beta p$. Let $B\subset X$ be a ball, let $E\subset \iol{B}$ be a compact set, and let
$f=(f_n)_{n=n_0}^\infty$ is  sequence of nonnegative Borel functions in $X$ such
that $\Lop_\beta f(x)\ge 1$ for every $x\in E$. Then
there exists a constant  $C=C(c_\sigma,p,q,\sigma,\beta,c_\mu,\Lambda)>0$  such that
\begin{equation}\label{e.fo}
\capm{\beta}{p}{q}(E, 2B,\Lambda B)\le C\lVert f\rVert_{L^p(X;l^q)}^p\,.
\end{equation}
\end{lemma}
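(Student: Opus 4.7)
The plan is to construct a continuous admissible test function $\varphi$ for $\capm{\beta}{p}{q}(E,2B,\Lambda B)$ out of $\Lop_\beta f$ and then bound its Haj{\l}asz--Triebel--Lizorkin seminorm by $C\lVert f\rVert_{L^p(X;l^q)}$ using the Leibniz-type Lemma~\ref{LemWithLipForTL} combined with the two embedding estimates, Lemmas~\ref{l.pot_norm_est} and~\ref{l.pot_poincare}. Throughout we may assume $\lVert f\rVert_{L^p(X;l^q)}<\infty$, which forces each $f_n\in L^1_{\mathrm{loc}}(X)$.

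Write $B=B(x_0,r)$ and let $\eta(x)=\max\{0,1-r^{-1}\dist(x,B)\}$, an $r^{-1}$-Lipschitz function with $\lVert\eta\rVert_\infty=1$, $\eta=1$ on $\iol B$ and $\eta=0$ outside $2B$. Since $\Lop_\beta f$ is only lower semicontinuous a priori, I would pass to the continuous partial sums
\[
F_N(x)=\sum_{n=n_0}^{N} 2^{-\beta n}\sum_{i\in P_n}\psi_{n,i}(x)\vint_{B(x_{n,i},3\cdot 2^{-n})}f_n(y)\,d\mu(y);
\]
each $F_N$ is continuous thanks to the local finiteness \eqref{e.overlap} and $f_n\in L^1_{\mathrm{loc}}$, and $F_N\uparrow\Lop_\beta f\ge 1$ on $E$. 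Moreover $F_N=\Lop_\beta f^{(N)}$, where $f^{(N)}$ is obtained from $f$ by setting $f_n=0$ for $n>N$. For any fixed $\varepsilon>0$, monotone pointwise convergence together with continuity of the $F_N$ and compactness of $E$ yield some $N=N(\varepsilon)$ with $(1+\varepsilon)F_N>1$ on $E$ (for each $x\in E$ pick $N_x$ with $(1+\varepsilon)F_{N_x}(x)>1$, which is possible even when $\Lop_\beta f(x)=\infty$; continuity extends this to a neighborhood, and a finite subcover plus monotonicity in $N$ gives the uniform $N$). The candidate is
\[
\varphi:=(1+\varepsilon)\eta F_N,
\]
which is continuous on $X$, vanishes outside $2B$, and satisfies $\varphi\ge 1$ on $E$; hence it is admissible in Definition~\ref{d.capacity}.

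To bound $\lvert\varphi\rvert_{\dot{M}^{\beta}_{p,q}(\Lambda B)}$, Lemma~\ref{l.pot_norm_est} applied to $f^{(N)}$ produces $g^{(N)}\in\mathbb{D}^{\beta}_X(F_N)$ with $\lVert g^{(N)}\rVert_{L^p(X;l^q)}\le C\lVert f\rVert_{L^p(X;l^q)}$, and Lemma~\ref{LemWithLipForTL} applied to $\eta F_N$ (with $L=r^{-1}$ and $\lVert\eta\rVert_\infty=1$) gives the gradient
\[
\rho_k=\bigl(g_k^{(N)}+2^{k(\beta-1)}r^{-1}F_N\bigr)\mathbf{1}_{2B}\in\mathbb{D}^{\beta}_X(\eta F_N).
\]
As in the proof of Theorem~\ref{t.cap_balls}(a), truncation to $k\ge m$, with $m$ the smallest integer satisfying $2^{-m-1}\le 2\Lambda r$, produces a gradient in $\mathbb{D}^{\beta}_{\Lambda B}(\eta F_N)$, and multiplication by $(1+\varepsilon)$ does the same for $\varphi$. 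The $g^{(N)}$-piece contributes at most $C\lVert f\rVert_{L^p(X;l^q)}$ to the mixed norm. For the remaining piece, $\beta<1$ yields $\bigl\lVert(2^{k(\beta-1)})_{k\ge m}\bigr\rVert_{l^q}\le C\,2^{m(\beta-1)}\le C(\Lambda r)^{1-\beta}$ (with the obvious modification when $q=\infty$), so its pointwise $l^q$-norm is controlled by $Cr^{-\beta}F_N(x)$; integrating over $\Lambda B$ and invoking Lemma~\ref{l.pot_poincare} for $f^{(N)}$ on the ball $\Lambda B$ of radius $\Lambda r$ gives
\[
\int_{\Lambda B}\bigl(r^{-\beta}F_N\bigr)^p\,d\mu\le Cr^{-\beta p}(\Lambda r)^{\beta p}\lVert f\rVert_{L^p(X;l^q)}^p\le C\lVert f\rVert_{L^p(X;l^q)}^p.
\]
Summing the two pieces shows $\lvert\varphi\rvert_{\dot{M}^{\beta}_{p,q}(\Lambda B)}^p\le C(1+\varepsilon)^p\lVert f\rVert_{L^p(X;l^q)}^p$, and letting $\varepsilon\to 0^+$ yields \eqref{e.fo}.

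The main obstacle is bridging the continuity requirement of Definition~\ref{d.capacity} with the fact that $\Lop_\beta f$ is only lower semicontinuous; the partial-sum truncation combined with the $(1+\varepsilon)$-cushion and compactness of $E$ resolves this without costing more than an arbitrarily small factor. A secondary technical point is the careful truncation of the fractional gradient at $k\ge m$ so that the $2^{k(\beta-1)}$ from the Leibniz rule is $l^q$-summable, and tracking that the $(\Lambda r)^{\beta p}$-factor from Lemma~\ref{l.pot_poincare} exactly compensates the $r^{-\beta p}$ from the truncated geometric sum.
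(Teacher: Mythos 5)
Your overall strategy is the same as the paper's: truncate $\Lop_\beta f$ to obtain a continuous function, multiply by a Lipschitz cutoff $\eta$, apply the Leibniz rule (Lemma~\ref{LemWithLipForTL}), restrict the resulting gradient to indices $k\ge m$ to localize to $\Lambda B$, and close the estimate with Lemmas~\ref{l.pot_norm_est} and~\ref{l.pot_poincare}; the paper also uses a compactness argument to pass from a truncation to the full $f$. The $(1+\varepsilon)$ cushion you use instead of the paper's factor $2$ (via the threshold $\Lop_\beta h_k>1/2$) is a harmless cosmetic variation.

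There is, however, a genuine gap in your continuity claim when $X$ is unbounded. You truncate only the upper tail, setting $f^{(N)}_n=f_n$ for $n\le N$, and assert that
$F_N=\Lop_\beta f^{(N)}$ is continuous ``thanks to the local finiteness~\eqref{e.overlap} and $f_n\in L^1_{\mathrm{loc}}$.'' That reasoning shows each layer $\sum_{i\in P_n}\psi_{n,i}(x)\vint_{3B_{n,i}}f_n\,d\mu$ is continuous, but when $\diam(X)=\infty$ one has $n_0=-\infty$, so $F_N=\sum_{n=-\infty}^{N}(\cdots)$ is still an infinite series, and continuity of the sum does not follow from continuity of the individual layers alone. (It is in fact true, but proving it requires a locally uniform tail estimate of the type $2^{-\beta n}\vint_{3B_{n,i}}f_n\lesssim 2^{n(\sigma/p-\beta)}\lVert f\rVert_{L^p(X;l^q)}$, which uses the standing hypothesis $\sigma>\beta p$ together with H\"older and reverse doubling --- none of which your one-line justification invokes.) The paper sidesteps this entirely by truncating on both sides, $h_{k,n}=\mathbf{1}_{\lvert n\rvert\le k}f_n$, so that $\Lop_\beta h_k$ is a genuinely finite sum of continuous functions; this is the cleaner fix and you should adopt it, or else supply the uniform-convergence estimate. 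With that repair, the rest of your argument --- including the careful $l^q$-summation of $2^{k(\beta-1)}$ for $k\ge m$ and the cancellation of the $r^{\pm\beta p}$ factors between the truncation and Lemma~\ref{l.pot_poincare} --- is correct and matches the paper.
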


\begin{proof}
Clearly, we may assume that 
the right-hand side of  \eqref{e.fo} is finite.
First we consider the case where $\Lop_\beta f$ is continuous.
We truncate $\Lop_\beta f$ and then use the fractional type Leibniz rule, as follows.
Write $B=B(x_0,r)\subset X$, with $x_0\in X$ and $r>0$, and 
let 
\[
\eta(x)=\max\biggl\{0,1-\frac{\dist(x,B)}{r}\biggr\}
\] 
for every $x\in X$. 
Then 
\[
\lvert\eta(x)-\eta(y)\rvert\leq \frac{d(x,y)}{r},\qquad \text{for every }x,y\in X\,, 
\] 
$0\leq \eta \leq 1$ in $X$, $\eta=1$ in the closure $\iol{B}$, and $\eta=0$ in $X\setminus 2B$. The function 
$\varphi=\eta \Lop_\beta f$ is continuous in $X$. Since $E\subset \iol{B}$ and $\Lop_\beta f\ge 1$ in $E$, we have
$\varphi\ge 1$ in $E$, and clearly $\varphi=0$ in $X\setminus 2B$.
Let $g=(g_k)_{k\in\N}\in\mathbb{D}^\beta_X( \Lop_\beta f)$.
Lemma~\ref{LemWithLipForTL} implies that $(\rho_k)_{k\in\Z}\in\mathbb{D}_X^\beta(\varphi)$, where 
\[
\rho_k=\big(g_k\Vert \eta\Vert_{\infty}+2^{k(\beta-1)}r^{-1} \Lop_\beta f \big)\mathbf{1}_{\{x\in X\,:\,\eta(x)\not=0\}}\,,\quad \text{ for every } k\in\Z\,.
\]
Let $m$ be the smallest integer
such that $2^{-m-1}\le 2\Lambda r$.
Observe that $(\mathbf{1}_{k\ge m}\rho_k)_{k\in\Z}\in  \mathbb{D}_{\Lambda B}^\beta(\varphi)$.

By Definition~\ref{d.capacity} of the capacity $\capm{\beta}{p}{q}$, 
we have 
\[\begin{split}
  &\capm{\beta}{p}{q}(E, 2B,\Lambda B) \leq 
  \lvert \varphi\rvert_{\dot{M}^{\beta}_{p,q}(\Lambda B)}^p
   \le \int_{{\Lambda B}} \left(\sum_{k\ge m} \rho_k(x))^q\right)^{p/q}\,d\mu(x)\\
  &\quad \le C(p) \int_{2B} \left(\sum_{k\ge m} g_k(x))^q\right)^{p/q}\,d\mu(x)+C(p)\int_{2B} \left(\sum_{k\ge m} \left(2^{k(\beta-1)}r^{-1} \Lop_\beta f(x)\right)^q\right)^{p/q}\,d\mu(x)\\
  &\quad \le C(p)\int_{X} \left(\sum_{k\in \Z} g_k(x))^q\right)^{p/q}\,d\mu(x)
 + C(p)r^{-p}\left(\sum_{k\ge m} 2^{kq(\beta-1)}\right)^{p/q}\int_{2B} \left(\Lop_\beta f(x)\right)^p\,d\mu(x)\\
 &\quad \le C(p)\int_{X} \left(\sum_{k\in \Z} g_k(x))^q\right)^{p/q}\,d\mu(x)
 + C(p,q,\beta)r^{-p}2^{mp(\beta-1)}\int_{2B} \left(\Lop_\beta f(x)\right)^p\,d\mu(x)\\
  &\quad \le C(p)\int_{X} \left(\sum_{k\in \Z} g_k(x))^q\right)^{p/q}\,d\mu(x)
 + C(p,q,\beta,\Lambda)r^{-\beta p}\int_{2B} \left(\Lop_\beta f(x)\right)^p\,d\mu(x)\,.
\end{split}\]
By taking infimum over all $g$ as above, we obtain
\[
\capm{\beta}{p}{q}(E, 2B,\Lambda B) 
\le C(p)\lvert \Lop_\beta f\rvert_{\dot{M}^{\beta}_{p,q}(X)}^p
 + C(p,q,\beta,\Lambda)r^{-\beta p}\int_{2B} \left(\Lop_\beta f(x)\right)^p\,d\mu(x)\,,
\]
and the claim in the present case follows from Lemmas~\ref{l.pot_norm_est} and~\ref{l.pot_poincare}.

Next we consider the case where the function $\Lop_\beta f$ is not continuous. 
For every $k\in\N$, we let $h_k=(h_{k,n})_{n=n_0}^\infty$, where 
$h_{k,n}=\mathbf{1}_{\lvert n\rvert\le k}f_n$ for every $n\ge n_0$. 
Then $(\Lop_\beta h_k)_{k\in\N}$ is an increasing sequence
of continuous real-valued functions in $X$ that 
converges to $\Lop_\beta f$ pointwise in $X$.
We consider the open sets \[
E_k=\left\{x\in X\,:\, \Lop_\beta h_k(x)>\frac{1}{2}\right\}\,,\qquad k\in\N\,.
\]
Since $\Lop_\beta f(x)\ge 1$ for every  $x\in E$, we see that
$E\subset \bigcup_{k\in\N} E_k$. By the compactness of $E$, there
exists a finite subcover, and since $E_n\subset E_{n+1}$ for every $n\in\N$, 
there exists  $k\in\N$ such that $E\subset E_k$.
Thus we have $\Lop_\beta (2h_k)> 1$ in $E$, and 
by the first part of the proof
\[
\capm{\beta}{p}{q}(E, 2B,\Lambda B)\le C\lVert 2h_k\rVert_{L^p(X;l^q)}^p
\le 2^pC\lVert f\rVert_{L^p(X;l^q)}^p\,.
\]
This shows that inequality \eqref{e.fo} holds also in the non-continuous case,
and the proof is complete.
\end{proof}

We now have all the tools needed for the comparison of the two capacities. We begin
with the second inequality in~\eqref{e.equiv_intro}.

\begin{theorem}\label{t.equiv_bp}
Let $1<p<\infty$, $1<q\le\infty$, $0<\beta <1$, and $\Lambda\ge 2$, and
assume that $\mu$ satisfies the quantitative reverse doubling condition~\eqref{e.reverse_doubling} 
for some exponent  $\sigma>\beta p$.
Let $B\subset X$ be a ball and $E\subset \iol{B}$ be a compact set.
Then there exists a constant  $C=C(c_\sigma,p,q,\sigma,\beta,c_\mu,\Lambda)>0$ such that
\begin{equation}\label{e.comp_caps}
\capm{\beta}{p}{q}(E, 2B,\Lambda B)\le  C \Cp^\beta_{p,q}(E)\,.
\end{equation}
\end{theorem}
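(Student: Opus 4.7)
The plan is to chain together the two key lemmas, namely Lemma~\ref{l.disc_est} (which compares $\Hop_\beta$ with the discrete potential $\Lop_\beta$) and Lemma~\ref{l.hbest} (which bounds the relative capacity by $\lVert f\rVert_{L^p(X;l^q)}^p$ whenever $\Lop_\beta f \ge 1$ on $E$). Both lemmas have already been established in the preceding sections, so the proof should reduce to a short assembly.

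First I would take any admissible sequence $f=(f_n)_{n\ge n_0}$ for $\Cp^\beta_{p,q}(E)$, that is, a sequence of nonnegative Borel functions satisfying $\Hop_\beta f(x)\ge 1$ for every $x\in E$. By Lemma~\ref{l.disc_est} there exists a constant $C_1=C_1(c_\mu)>0$ such that $\Hop_\beta f(x)\le C_1\Lop_\beta f(x)$ for every $x\in X$. Setting $\widetilde f = C_1 f$, linearity of $\Lop_\beta$ in its argument gives $\Lop_\beta \widetilde f = C_1 \Lop_\beta f \ge \Hop_\beta f \ge 1$ on $E$.

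Next I would apply Lemma~\ref{l.hbest} to the sequence $\widetilde f$. Its hypotheses match the hypotheses of the theorem verbatim (doubling, quantitative reverse doubling with exponent $\sigma>\beta p$, compact $E\subset\iol{B}$, $\Lambda\ge 2$), so we obtain a constant $C_2=C_2(c_\sigma,p,q,\sigma,\beta,c_\mu,\Lambda)>0$ with
\[
\capm{\beta}{p}{q}(E, 2B,\Lambda B)\le C_2\lVert \widetilde f\rVert_{L^p(X;l^q)}^p = C_2 C_1^p\lVert f\rVert_{L^p(X;l^q)}^p.
\]
Taking the infimum over all admissible sequences $f$ in Definition~\ref{d.cpq} yields
\[
\capm{\beta}{p}{q}(E, 2B,\Lambda B)\le C_2 C_1^p\, \Cp^\beta_{p,q}(E),
\]
which is the desired inequality \eqref{e.comp_caps} with $C=C_2 C_1^p$.

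There is no real obstacle here since all the heavy lifting has been done: the mixed-norm estimate of Lemma~\ref{l.pot_norm_est} and the Sobolev-type inequality of Lemma~\ref{l.pot_poincare} (used inside Lemma~\ref{l.hbest}) together with the pointwise domination of Lemma~\ref{l.disc_est} already encode the content of the theorem. The only minor point worth being careful about is the dependence on constants when passing from $f$ to $C_1 f$, and checking that the compactness of $E$ used inside Lemma~\ref{l.hbest} (to reduce the possibly discontinuous $\Lop_\beta f$ case to the continuous one via truncation) is inherited from our hypothesis.
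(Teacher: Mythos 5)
Your proof is correct and follows essentially the same argument as the paper: take an admissible $f$ for $\Cp^\beta_{p,q}(E)$, use Lemma~\ref{l.disc_est} to get $\Lop_\beta(C_1 f)\ge 1$ on $E$, apply Lemma~\ref{l.hbest}, and take the infimum. The paper likewise absorbs the constant from Lemma~\ref{l.disc_est} into the admissible sequence before applying Lemma~\ref{l.hbest}, so there is no meaningful difference.
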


\begin{proof}
We may assume that $\Cp^\beta_{p,q}(E)<\infty$. Let
$f=(f_n)_{n=n_0}^\infty$ be a sequence of nonnegative Borel functions in $X$ such that $\Hop_\beta f\ge 1$ in $E$.
By Lemma~\ref{l.disc_est}, there is a constant $C=C(c_\mu)>0$ such that
$C\Lop_\beta f(x)\ge 1$ for every $x\in E$.
Hence $\Lop_\beta (Cf)(x)\ge 1$ for every $x\in E$ and therefore Lemma \ref{l.hbest} implies that
\[
\capm{\beta}{p}{q}(E, 2B,\Lambda B)\le C(c_\sigma,p,q,\sigma,\beta,c_\mu,\Lambda)\lVert Cf\rVert_{L^p(X;l^q)}^p\le C(c_\sigma,p,q,\sigma,\beta,c_\mu,\Lambda)\lVert f\rVert_{L^p(X;l^q)}^p\,.
\]
Inequality \eqref{e.comp_caps} follows by taking infimum over all sequences $f$ as above.
\end{proof}

The converse inequality, that is, the first inequality in~\eqref{e.equiv_intro}, 
requires further restrictions on $\Lambda$ and $r$, but otherwise the assumptions are less restrictive
than in Theorem~\ref{t.equiv_bp}.

\begin{theorem}\label{t.comparison_dc}
Let $1<p<\infty$, $1\le q\le\infty$, $0<\beta <1$, and $\Lambda\ge 41$, and
assume that $\mu$ satisfies the
reverse doubling condition \eqref{e.rev_dbl_decay}, with a constant $0<c_R<1$. 
Assume that $B=B(x_0,r)$, with $x_0\in X$ and $0<r<(1/80)\diam(X)$,
and let $E\subset \iol{B}$ be a closed set.
Then there exists a constant  $C=C(p,c_\mu,c_R)>0$ such that
\[
\Cp^\beta_{p,q}(E)\le C\capm{\beta}{p}{q}(E, 2B,\Lambda B)\,.
\]
\end{theorem}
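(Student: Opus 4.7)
The plan is to show that any admissible competitor $\varphi$ for $\capm{\beta}{p}{q}(E, 2B, \Lambda B)$, together with a fractional Haj{\l}asz gradient $g = (g_k)_{k \in \Z} \in \mathbb{D}^\beta_{\Lambda B}(\varphi)$, produces a nonnegative Borel sequence $f = (f_n)_{n = n_0}^\infty$ with $\Hop_\beta f \gtrsim 1$ on $E$ and $\lVert f \rVert_{L^p(X; l^q)}^p \le C \lVert g \rVert_{L^p(\Lambda B; l^q)}^p$. The construction is a quantitative version of the lower-bound argument used in Theorem~\ref{t.HC_bp}. First I would truncate so that $0 \le \varphi \le 1$ (this preserves admissibility and does not enlarge the Haj{\l}asz seminorm). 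Fix $x \in E$ and choose $n_x \in \Z$ with $2^{-n_x - 1} \le 5r < 2^{-n_x}$, so $2^{-n_x} \in (5r, 10r]$. The hypothesis $r < \frac{1}{80}\diam(X)$ gives $n_x - 2 \ge n_0$ and $2^{-n+3} \le 80r < \diam(X)$ at every level $n \ge n_x$, which is exactly what Lemma~\ref{lemma: Poincare} requires. The hypothesis $\Lambda \ge 41$ gives $B(x, 2^{-n+2}) \subset B(x_0, 41 r) \subseteq \Lambda B$ for every $n \ge n_x$, so $g$ restricts to a fractional Haj{\l}asz gradient of $\varphi$ on every such Poincar\'e ball.

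Exactly as in Theorem~\ref{t.HC_bp}, reverse doubling yields
\[
\varphi_{B(x, 2^{-n_x})} \le \frac{\mu(2B)}{\mu(B(x, 2^{-n_x}))} \le \frac{\mu(B(x_0, 2r))}{\mu(B(x_0, 4r))} \le c_R < 1\,,
\]
so $\varphi(x) - \varphi_{B(x, 2^{-n_x})} \ge 1 - c_R$ because $\varphi(x) \ge 1$. Writing $B_k = B(x, 2^{-n_x - k})$ and using the continuity of $\varphi$, I would telescope as
\[
1 - c_R \le \sum_{k=0}^\infty \lvert \varphi_{B_{k+1}} - \varphi_{B_k} \rvert \le c_\mu \sum_{k=0}^\infty \vint_{B_k} \lvert \varphi - \varphi_{B_k} \rvert \, d\mu\,,
\]
and apply Lemma~\ref{lemma: Poincare} at each level $n = n_x + k$ to arrive at
\[
1 - c_R \le C(c_\mu, c_R) \sum_{n = n_x}^\infty 2^{-n\beta} \sum_{j = n-3}^{n} \vint_{B(x, 2^{-n+2})} g_j(y) \, d\mu(y)\,.
\]

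Shifting the outer index via $m = n - 2$ so that $2^{-n+2} = 2^{-m}$, I then define
\[
f_m(y) = \sum_{j = m-1}^{m+2} g_j(y) \mathbf{1}_{\Lambda B}(y)\,, \qquad m \ge n_0\,,
\]
(with $g_j = 0$ whenever $j < n_0$). The previous estimate translates into
\[
1 - c_R \le C' \sum_{m \ge n_0} 2^{-m\beta} \vint_{B(x, 2^{-m})} f_m(y) \, d\mu(y)\,,
\]
because the only contributing indices satisfy $B(x, 2^{-m}) \subset \Lambda B$, rendering the cutoff $\mathbf{1}_{\Lambda B}$ inactive on these balls. Using $\mathbf{1}_{B(y, 2^{-m})}(x) = \mathbf{1}_{B(x, 2^{-m})}(y)$ together with doubling to compare $\mu(B(y, 2^{-m}))$ and $\mu(B(x, 2^{-m}))$ for $y \in B(x, 2^{-m})$, the right-hand side is at most $c_\mu \Hop_\beta f(x)$. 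Hence $\Hop_\beta f(x) \ge c(1-c_R)$ uniformly on $E$, and scaling makes a constant multiple of $f$ admissible for $\Cp^\beta_{p,q}(E)$. The mixed-norm bound $\lVert f \rVert_{L^p(X; l^q)}^p \le C'' \lVert g \rVert_{L^p(\Lambda B; l^q)}^p$ follows from the four-term triangle inequality in $l^q$ applied to the definition of $f_m$. Taking the infimum over $g \in \mathbb{D}^\beta_{\Lambda B}(\varphi)$ and then over admissible $\varphi$ concludes the proof. The delicate point, and the main obstacle, is the simultaneous juggling of the constraints: $\Lambda \ge 41$ forces $2^{-n_x} \lesssim 10r$ so that the Poincar\'e ball $B(x, 2^{-n+2})$ fits inside $\Lambda B$ at the base level, while $r < \frac{1}{80}\diam(X)$ is what keeps $2^{-n+3} < \diam(X)$ at that same level. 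The choice $2^{-n_x-1} \le 5r < 2^{-n_x}$ is precisely what reconciles these with the $2^{-n_x} > 5r$ needed by the reverse doubling step that kickstarts the telescoping.
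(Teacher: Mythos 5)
Your proof is correct and is essentially the paper's own argument: both truncate $\varphi$ to $[0,1]$, use reverse doubling at the base scale $2^{-m}\asymp 5r$ to create the gap $1-c_R$, telescope together with Lemma~\ref{lemma: Poincare} on the shrinking balls $B(x,2^{-n})$, re-index, and set $f_n=\mathbf{1}_{\Lambda B}\sum_{k=n-1}^{n+2}g_k$ so that $\Hop_\beta f\gtrsim 1$ on $E$ while $\lVert f\rVert_{L^p(X;l^q)}\le 4\lVert g\rVert_{L^p(\Lambda B;l^q)}$. The only difference is that the paper delegates the telescoping/Poincar\'e computation to the proof of Theorem~\ref{t.HC_bp}, whereas you write it out in full.
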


\begin{proof}
The proof is similar to that of Theorem \ref{t.HC_bp}.
Let $\varphi\colon X\to \R$ be a continuous function  such that $\varphi(x)\ge 1$ for every $x\in E$ and 
$\varphi(x)=0$ for every $x\in X\setminus 2B$, and let $g=(g_k)_{k\in\Z}\in\mathbb{D}^\beta_{\Lambda B}(\varphi)$.
By replacing $\varphi$ with $\max\{0,\min\{\varphi,1\}\}$, if necessary, we may assume that $0\le \varphi \le 1$.

Let $x\in E$ and let $m\in \Z$ be such that $2^{-m-1}\le 5r<2^{-m}$.
Arguing as in the proof of Theorem \ref{t.HC_bp}
and using the inclusion $B(x,2^{-n})\subset B(x_0,\Lambda r)$ if $n\ge m-2$,  we obtain
\begin{align*}
0<1-c_R&\le C(c_\mu,c_R)\sum_{n=m-2}^\infty 2^{-n \beta}\;\sum_{k= n-1}^{n+2}\vint_{B(x,2^{-n})}  \mathbf{1}_{\Lambda B}(y)g_k(y)\,d\mu(y)\,.
\end{align*}
Define $f=(f_n)_{n\ge n_0}$, where $f_n=\mathbf{1}_{\Lambda B}\sum_{k= n-1}^{n+2}g_k$ for each $n\ge n_0$.  
Since $m-2>n_0$, we have
\begin{align*}
0<1-c_R&\le C(c_\mu,c_R)\sum_{n=n_0}^\infty 2^{-n \beta}\;\vint_{B(x,2^{-n})}   f_n(y)\,d\mu(y)\\
&\le C(c_\mu,c_R)\sum_{n=n_0}^\infty 2^{-n \beta}\;\int_{X} \frac{\mathbf{1}_{B(y,2^{-n})}(x)}{\mu(B(y,2^{-n}))}  f_n(y)\,d\mu(y)\\
&=C(c_\mu,c_R)\Hop_\beta f(x)\,.
\end{align*}
Hence, there exists a constant $C=C(c_\mu,c_R)>0$ such that  $\Hop_\beta(Cf)(x)\ge 1$ for every $x\in E$.
Thus
\[
\Cp^\beta_{p,q}(E) \le  \lVert Cf\rVert_{L^p(X;l^q)}^p=C^p\lVert f\rVert_{L^p(X;l^q)}^p
\le (4C)^p\lVert g\rVert_{L^p(\Lambda B;l^q)}^p\,,
\]
and the claim follows by taking infimum first over all $g$ and the over all $\varphi$ as above.
\end{proof}

\section{Dual characterization of capacity}\label{s.Riesz_capacity}

We now turn to the dual characterization of the capacity $\Cp^\beta_{p,q}$ given in terms of measures,
see Theorem~\ref{t.n_cap_dual} below.
In Euclidean spaces, this type of results have a long history, see for instance \cite{MR1097178, MR1411441, MR277741}
and the references therein. We prove the dual characterization 
by adapting the proof of equality (4.4.7) in \cite{MR1411441} to the setting of metric measure spaces.
A fundamental tool in the proof is the Minimax theorem~\cite[Theorem~2.4.1]{MR1411441}. 
The somewhat technical task of verifying the assumptions of the Minimax theorem 
in our setting is carried out separately in Lemma~\ref{l.m_assumptions}.

\begin{definition}
Let $\beta>0$ and let
$\nu$ be a nonnegative Borel measure in $X$. We define 
\begin{equation}\label{e.pot}
\widecheck{\mathcal{H}}_\beta\nu(x)=\left(  2^{-\beta n} \frac{\nu(B(x,2^{-n}))}{\mu(B(x,2^{-n}))} \right)_{n=n_0}^\infty\,,\qquad x\in X.
\end{equation}
\end{definition}

Of particular interest to us are the nonnegative Radon measures
on a compact set $E\subset X$.

\begin{definition}
Let $E\subset X$ be a compact set.
We say that $\nu$ is a nonnegative Radon measure on $E$, if
$\nu$ is a nonnegative Borel measure on $E$ that satisfies the following properties: 
\begin{itemize}
\item $\nu(E)<\infty$.
\item For every Borel set $F$ in $E$, we have 
\[\nu(F)=\inf\{\nu(U)\,:\,F\subset U\text{ and } U \text{ is open in } E\}\,.\]
\item For every Borel set $F$ in $E$, we have
\[
\nu(F)=\sup\{\nu(K)\,:\,K\subset F\text{ and } K \text{ is compact in }E\}\,.
\]
\end{itemize} 
We denote by $\mathcal{M}^+(E)$ the set of all nonnegative Radon measures on $E$.
If $f$ is a Borel function on $X$ and $\nu\in\mathcal{M}^+(E)$, we 
define
\[
\int_X f\,d\nu = \int_E f|_E\,d\nu\,.
\]
We also define $\nu(A)=\nu(A\cap E)$ if $A\subset X$ is a Borel set, and this
extends $\nu$ as a nonnegative Borel measure in $X$.
\end{definition}

We are ready to state and prove the dual characterization of the capacity.

\begin{theorem}\label{t.n_cap_dual}
Let $1<p<\infty$, $1\le q\le \infty$, and $\beta>0$,
and let $E\subset X$ be a compact set. 
Then
\[
\Cp^\beta_{p,q}(E)^{1/p}  =  \sup
\Biggl\{\frac{\nu(E)}{\big\lVert \widecheck{\mathcal{H}}_\beta \nu\big\rVert_{L^{p'}(X;l^{q'})}}\,:\, \nu\in\mathcal{M}^+(E) \Biggr\}\,.
\]
\end{theorem}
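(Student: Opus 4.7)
The proof plan follows the Meyer--Adams--Hedberg strategy (see \cite[\S4.4]{MR1411441}) adapted to the mixed-norm setting. The essential preliminary observation is a Fubini-type pairing identity,
\[
\int_X \Hop_\beta f \, d\nu = \sum_{n=n_0}^\infty \int_X f_n(y)\,(\widecheck{\Hop}_\beta \nu)_n(y)\, d\mu(y),
\]
valid for any nonnegative Borel sequence $f=(f_n)_{n\ge n_0}$ and any $\nu\in\M^+(E)$. I would establish this first using Tonelli's theorem and the symmetry $\mathbf{1}_{B(y,2^{-n})}(x)=\mathbf{1}_{B(x,2^{-n})}(y)$ together with the definition of $\widecheck{\Hop}_\beta\nu$.

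For the easy inequality (``sup $\le$ Cap$^{1/p}$''), suppose $f$ is admissible for $\Cp^\beta_{p,q}(E)$ and $\nu\in\M^+(E)$. Since $\Hop_\beta f\ge 1$ on $E=\spt\nu\cap E$, one has
\[
\nu(E)\le \int_X \Hop_\beta f\, d\nu\le \lVert f\rVert_{L^p(X;l^q)}\,\big\lVert \widecheck{\Hop}_\beta \nu\big\rVert_{L^{p'}(X;l^{q'})}
\]
by the identity above and two applications of H\"older's inequality, first in $l^q$ and then in $L^p$. Dividing through, taking the infimum over admissible $f$, and then the supremum over $\nu$, gives one direction.

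For the hard inequality I would set
\[
\Phi(f,\nu)=\lVert f\rVert_{L^p(X;l^q)}^p+\nu(E)-\int_X \Hop_\beta f\, d\nu.
\]
The first step is to show that $\Cp^\beta_{p,q}(E)=\inf_{f\ge 0}\sup_{\nu\in\M^+(E)}\Phi(f,\nu)$: when $\Hop_\beta f\ge 1$ on $E$ the inner supremum equals $\lVert f\rVert_{L^p(X;l^q)}^p$ (attained at $\nu=0$), while otherwise a large multiple of a point mass (or, if $\Hop_\beta f$ is not known to be pointwise defined everywhere, a normalized measure on a small ball around a point where the defect occurs) forces the supremum to $+\infty$. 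The heart of the argument is then an application of the Minimax theorem, whose hypotheses are precisely what Lemma~\ref{l.m_assumptions} is designed to verify, allowing the exchange
\[
\Cp^\beta_{p,q}(E)=\sup_{\nu\in\M^+(E)}\inf_{f\ge 0}\Bigl[\lVert f\rVert_{L^p(X;l^q)}^p+\nu(E)-\sum_n\int_X f_n(\widecheck{\Hop}_\beta\nu)_n\, d\mu\Bigr].
\]
For fixed $\nu$ with $T=\lVert \widecheck{\Hop}_\beta\nu\rVert_{L^{p'}(X;l^{q'})}$, the mixed-norm duality gives $\sup_{\lVert f\rVert_{L^p(X;l^q)}=t}\int f\cdot \widecheck{\Hop}_\beta\nu\, d\mu=tT$, so the scalar optimization $\min_{t\ge 0}(t^p-tT)=-(p-1)p^{-p'}T^{p'}$ reduces the inner infimum to $\nu(E)-(p-1)p^{-p'}T^{p'}$. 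A final one-parameter optimization along $\lambda\nu$ produces $\sup_\nu(\nu(E)/T)^p$, and extracting $p$-th roots yields the claim.

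The expected main obstacle is the Minimax step: one must identify the correct topologies on the nonnegative cones of $L^p(X;l^q)$ and $\M^+(E)$ so that $\Phi$ is convex-lower semicontinuous in $f$, concave-upper semicontinuous in $\nu$, with weak-$*$ compactness on $\nu$-balls, all in the mixed-norm setting. A secondary subtlety is the pointwise identification of the constraint $\Hop_\beta f\ge 1$ on $E$ inside the $\sup_\nu$, which relies on some regularity (lower semicontinuity on $E$) of $\Hop_\beta f$; if this fails for general $f$, one truncates and uses the nonnegativity of the kernels in $\Hop_\beta$ to approximate with lower semicontinuous competitors, an adjustment already implicit in the Adams--Hedberg scheme.
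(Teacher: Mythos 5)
Your Fubini pairing identity and the easy inequality (double H\"older, infimum over admissible $f$, supremum over $\nu$) match the paper's argument exactly. The hard inequality is where you diverge: you introduce the Lagrangian $\Phi(f,\nu)=\lVert f\rVert_{L^p(X;l^q)}^p+\nu(E)-\mathcal{E}_\beta(\nu,f)$ on the full cones $\{f\ge 0\}$ and $\mathcal{M}^+(E)$, whereas the paper keeps the raw bilinear form $\mathcal{E}_\beta$ and works on the \emph{normalized} sets $Z=\{\nu\in\mathcal{M}^+(E):\nu(E)=1\}$ and the unit ball $W$ of the nonnegative cone of $L^p(X;l^q)$, exploiting the $0$-homogeneity of the Rayleigh quotient $\nu(E)/\lVert\widecheck{\mathcal{H}}_\beta\nu\rVert_{L^{p'}(X;l^{q'})}$. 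This normalization is not cosmetic: the Minimax theorem \cite[Theorem 2.4.1]{MR1411441} requires one of the two sets to be compact Hausdorff, and Lemma~\ref{l.m_assumptions} establishes exactly that for $Z$ (weak-$*$ compactness via Banach--Alaoglu after showing $Z$ is weak-$*$ closed) together with lower semicontinuity of $\nu\mapsto\mathcal{E}_\beta(\nu,f)$ on $Z$.

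Your cones are not compact in any relevant topology, so the exchange $\inf_f\sup_\nu\Phi=\sup_\nu\inf_f\Phi$ is not covered by Lemma~\ref{l.m_assumptions} as you claim; you flag this as the ``expected main obstacle'' but do not resolve it, and that is a genuine gap since the exchange is the entire content of the hard inequality. It can be repaired either by truncating to $\nu(E)\le R$, applying Minimax on the compact truncation, and letting $R\to\infty$, or --- simpler, and what the paper actually does --- by observing that the Rayleigh quotient is invariant under $\nu\mapsto\lambda\nu$, so restricting to $\nu(E)=1$ loses nothing, and likewise restricting $f$ to the unit ball loses nothing for the bilinear form by scaling. Your post-Minimax scalar work --- the Legendre transform giving $(p-1)p^{-p'}T^{p'}$ and the $\lambda\nu$ optimization yielding $(\nu(E)/T)^p$ --- is correct and is an equivalent, if somewhat longer, route to the paper's direct identification $\sup_{f\in W}\mathcal{E}_\beta(\nu,f)=\lVert\widecheck{\mathcal{H}}_\beta\nu\rVert_{L^{p'}(X;l^{q'})}$ for $\nu\in Z$ via the mixed-norm duality of \cite[Lemma 2.3]{MR2542655}; only the Minimax step itself needs the fix.
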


\begin{proof}
Let $\nu\in\mathcal{M}^+(E)$, let $f=(f_n)_{n=n_0}^\infty$ be a sequence of nonnegative Borel functions,
and write
\[
\mathcal{E}_\beta(\nu,f)=\int_{X} \Hop_\beta f(x)\,d\nu(x)\,.
\]
Using Definition~\ref{d.hbeta} and Fubini's theorem, we have
\begin{equation}\label{e.duality}
\begin{split}
\mathcal{E}_\beta(\nu,f)
&=\sum_{n=n_0}^\infty  2^{-\beta n}\int_{X} \int_{X} \frac{\mathbf{1}_{B(y,2^{-n})}(x)}{\mu(B(y,2^{-n}))} f_n(y)\,d\mu(y)\,d\nu(x)\\
&=\sum_{n=n_0}^\infty  2^{-\beta n}\int_{X} \frac{f_n(y)}{\mu(B(y,2^{-n}))} \int_{X} \mathbf{1}_{B(y,2^{-n})}(x)\,d\nu(x)\, d\mu(y)\\
&=\int_{X} \sum_{n=n_0}^\infty  2^{-\beta n}\frac{\nu(B(y,2^{-n}))}{\mu(B(y,2^{-n}))}\, f_n(y)\,d\mu(y)\\
&=\int_{X} \langle \widecheck{\mathcal{H}}_\beta\nu(y),f(y)\rangle \,d\mu(y)\,.
\end{split}
\end{equation}
Now, assume that
$\Hop_\beta f(x)\ge 1$ for all $x\in E$.
Then, by using also
H\"older's inequality twice, we get
\[
\nu(E)\le \mathcal{E}_\beta(\nu,f)\le \big\lVert \widecheck{\mathcal{H}}_\beta \nu\big\rVert_{L^{p'}(X;l^{q'})}\lVert f\rVert_{L^{p}(X;l^{q})}\,.
\]
By taking infimum over all sequences $f$ with $\Hop_\beta f\ge 1$ in $E$, we thus obtain
\[
\frac{\nu(E)}{\big\lVert \widecheck{\mathcal{H}}_\beta \nu\big\rVert_{L^{p'}(X;l^{q'})}}\le \Cp^\beta_{p,q}(E)^{1/p}\,.
\]
Taking supremum over all $\nu\in\mathcal{M}^+(E)$ gives
\[
\sup
\Biggl\{\frac{\nu(E)}{\big\lVert \widecheck{\mathcal{H}}_\beta \nu\big\rVert_{L^{p'}(X;l^{q'})}}\,:\, \nu\in\mathcal{M}^+(E) \Biggr\}\le \Cp^\beta_{p,q}(E)^{1/p}\,.
\]

For the  converse inequality, we  will use the Minimax theorem \cite[Theorem 2.4.1]{MR1411441}; we refer to Lemma~\ref{l.m_assumptions} for further details. Define
\[Z=\{\nu\in\mathcal{M}^+(E)\,:\,\nu(E)=1\}\]
and
\[
W=\big\{f\in L^p(X;l^q)\,:\,f=(f_n)_{n=n_0}^\infty\ge 0 \text{ and }\lVert f\rVert_{L^p(X;l^q)}\le 1\big\}\,.
\]
Observe from \cite[Lemma 2.3]{MR2542655} and \eqref{e.duality} that for $\nu\in Z$ we have
\[
\frac{\big\lVert \widecheck{\mathcal{H}}_\beta \nu\big\rVert_{L^{p'}(X;l^{q'})}}{\nu(E)} = \sup_{f\in W} \int_{X}\langle \widecheck{\mathcal{H}}_\beta\nu(y),f(y)\rangle \,d\mu(y)=
 \sup_{f\in W}\mathcal{E}_\beta(\nu,f)\,.
\]
By taking infimum over all $\nu\in Z$, we get
\begin{equation}\label{e.connect}
\inf_{\nu\in Z} \frac{\big\lVert \widecheck{\mathcal{H}}_\beta \nu\big\rVert_{L^{p'}(X;l^{q'})}}{\nu(E)}=\inf_{\nu\in Z}\sup_{f\in W}\mathcal{E}_\beta(\nu,f)\,.
\end{equation}
On the other hand, since the Dirac delta measure $\delta_x$ belongs to $Z$ for any $x\in E$, we have 
\[
\inf_{\nu\in Z}\mathcal{E}_\beta(\nu,f)=\inf_{\nu\in Z} \int_{X} \Hop_\beta f(x)\,d\nu(x)\leq\inf_{x\in E}\Hop_\beta f(x)
\le \frac{\inf_{x\in E}\Hop_\beta f(x)}{\lVert f\rVert_{L^p(X;l^q)}}
\]
for every $f\in W$.
By scaling and Definition~\ref{d.cpq}, it follows that
\begin{equation}\label{e.connect2}
\begin{split}
	\sup_{f\in W}\inf_{\nu\in Z}\mathcal{E}_\beta(\nu,f)
&\le \sup_{f\in W}\frac{\inf_{x\in E}\Hop_\beta f(x)}{\lVert f\rVert_{L^p(X;l^q)}}=
\sup_{0\le f\in L^p(X;l^q)}\frac{\inf_{x\in E}\Hop_\beta f(x)}{\lVert f\rVert_{L^p(X;l^q)}}\\
&= \sup_{0\le f\in L^p(X;l^q)}\Biggl(\left\lVert \frac{f}{\inf_{x\in E}\Hop_\beta f(x)}\right\rVert_{L^p(X;l^q)}^p\Biggr)^{-1/p}\\
&= \Biggl(\inf_{0\le f\in L^p(X;l^q)}\left\lVert \frac{f}{\inf_{x\in E}\Hop_\beta f(x)}\right\rVert_{L^p(X;l^q)}^p\Biggr)^{-1/p}\le \Cp^\beta_{p,q}(E)^{-1/p}\,.
\end{split}
\end{equation}
By \eqref{e.connect}, the Minimax theorem in the following Lemma~\ref{l.m_assumptions}, and~\eqref{e.connect2} we obtain
\begin{align*}
\inf_{\nu\in Z} \frac{\big\lVert \widecheck{\mathcal{H}}_\beta \nu\big\rVert_{L^{p'}(X;l^{q'})}}{\nu(E)}
& = \inf_{\nu\in Z}\sup_{f\in W}\mathcal{E}_\beta(\nu,f)
  = \sup_{f\in W}\inf_{\nu\in Z}\mathcal{E}_\beta(\nu,f)\\
& \le \sup_{f\in W}\inf_{\nu\in Z}\mathcal{E}_\beta(\nu,f)\le \Cp^\beta_{p,q}(E)^{-1/p}\,.
\end{align*}
Hence 
\[
\Cp^\beta_{p,q}(E)^{1/p}\le \sup_{\nu\in Z} \frac{\nu(E)}{\big\lVert \widecheck{\mathcal{H}}_\beta \nu\big\rVert_{L^{p'}(X;l^{q'})}}\le \sup
\Biggl\{\frac{\nu(E)}{\big\lVert \widecheck{\mathcal{H}}_\beta \nu\big\rVert_{L^{p'}(X;l^{q'})}}\,:\, \nu\in\mathcal{M}^+(E) \Biggr\}\,,
\]
as claimed.
\end{proof}

To complete the proof of Theorem~\ref{t.n_cap_dual}, 
we still need to verify that the Minimax theorem is valid in our setting.
The underlying ideas are well known, but we present the details for the sake of completeness.

\begin{lemma}\label{l.m_assumptions}
Under the notation of the proof of Theorem~\textup{\ref{t.n_cap_dual}}, we have
\[
\inf_{\nu\in Z}\sup_{f\in W} \mathcal{E}_\beta(\nu,f)
=
\sup_{f\in W}\inf_{\nu\in Z}\mathcal{E}_\beta(\nu,f)\,.
\]	
\end{lemma}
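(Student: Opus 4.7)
The plan is to verify the hypotheses of the Minimax theorem \cite[Theorem~2.4.1]{MR1411441}, which requires $Z$ to be a convex set that is compact in some Hausdorff topology, $W$ to be convex, and the functional to be lower semicontinuous and convex in $\nu$ for each fixed $f$, and concave in $f$ for each fixed $\nu$.

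Convexity of $Z$ and $W$ is routine: probability measures on $E$ form a convex set, and so do nonnegative sequences with $L^p(X;l^q)$-norm at most one (by the triangle inequality). The identity $\mathcal{E}_\beta(\nu,f) = \int_X \langle \widecheck{\mathcal{H}}_\beta \nu(y),\,f(y)\rangle\,d\mu(y)$ established in~\eqref{e.duality} shows that $\mathcal{E}_\beta$ is separately linear in $\nu$ and in $f$, which yields both the convexity in $\nu$ and the concavity in $f$ automatically.

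For the topology on $Z$, I would use the weak-$\ast$ topology coming from the Riesz representation of $\mathcal{M}^+(E)$ as the positive cone of $C(E)^\ast$, which is available since $E$ is a compact metric space. The set $Z$ is the intersection of this positive cone with the weak-$\ast$ continuous hyperplane $\bigl\{\nu : \int 1\,d\nu = 1\bigr\}$ and sits inside the closed unit ball of $C(E)^\ast$, so it is weak-$\ast$ compact by the Banach--Alaoglu theorem. Moreover, $C(E)$ is separable because $E$ is a compact metric space, so this unit ball is weak-$\ast$ metrizable and sequential arguments suffice.

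The main obstacle is the weak-$\ast$ lower semicontinuity of $\nu\mapsto \mathcal{E}_\beta(\nu,f) = \int_E \Hop_\beta f\,d\nu$. I would first argue that $\Hop_\beta f$ itself is nonnegative and lower semicontinuous on $X$: for each fixed $y$ and $n$, the function $x\mapsto \mathbf{1}_{B(y,2^{-n})}(x)$ is the indicator of an open ball in $x$, hence lower semicontinuous in $x$, so Fatou's lemma pulls the $\liminf$ in $x$ inside the integration over $y$, and a countable sum of nonnegative lower semicontinuous functions is again lower semicontinuous. Once $\Hop_\beta f$ is known to be nonnegative and lower semicontinuous on $X$, an approximation of $\Hop_\beta f|_E$ from below by continuous functions on $E$, combined with the defining property of weak-$\ast$ convergence, gives $\int_E \Hop_\beta f\,d\nu \le \liminf_k \int_E \Hop_\beta f\,d\nu_k$ for any sequence $\nu_k\to\nu$ in $Z$, which is exactly the required lower semicontinuity. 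With all the hypotheses of \cite[Theorem~2.4.1]{MR1411441} in place, the minimax equality follows at once.
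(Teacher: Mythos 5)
Your proposal is correct and follows essentially the same route as the paper: Riesz representation embeds $Z$ in $C(E)^*$, Banach--Alaoglu plus weak-$*$ closedness of the positivity and normalization constraints gives compactness, separability of $C(E)$ gives metrizability, and lower semicontinuity of $\nu\mapsto\mathcal{E}_\beta(\nu,f)$ is obtained exactly as in the paper by showing $\Hop_\beta f$ is lower semicontinuous (Fatou's lemma on the open-ball indicators, then monotone countable summation) and then approximating $\Hop_\beta f|_E$ from below by continuous functions. The only cosmetic difference is that you derive convexity in $\nu$ and concavity in $f$ directly from the bilinearity exhibited in~\eqref{e.duality}, whereas the paper cites~\cite[p.~30]{MR1411441}; the substance is identical.
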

\begin{proof}
Since $E$ is compact, the Riesz representation
theorem \cite[Theorem 6.19]{MR924157} implies that every $T\in (C(E),\lVert \cdot\rVert_\infty)^*$ is represented by a unique regular Borel measure
$\nu$, in the sense that 
$T(g)=\int_E g\,d\nu$ for every $f\in C(E)$.
The operator norm $\lVert \nu \rVert$ of $\nu$ is the total variation of $\nu$.
Hence, we can regard $Z$ as a subset  of the dual $(C(E), \|\cdot\|_\infty)^*$, and endow $Z$ with the relative weak* topology.

We will apply the  Minimax theorem \cite[Theorem 2.4.1]{MR1411441} to the
functional $\mathcal{E}_\beta$ defined in $Z\times W$.
The sets $Z$ and $W$ are convex and $\mathcal{E}_\beta$
is convex in $Z$ and concave in $W$, see \cite[p.~30]{MR1411441}.
Hence, we need to show that 
\begin{itemize}
\item[(a)] $Z$ is a Hausdorff space,
\item[(b)] $Z$ is compact, and
\item[(c)] for any $f\in W$, the functional $\mathcal{E}_\beta(\cdot,f)$ is lower semicontinuous in $Z$.
\end{itemize}
	
We first prove (a). Let $\nu_1,\nu_2\in Z=\{\nu\in\mathcal{M}^+(E): \nu(E)=1\}$ be two distinct measures. 
By the uniqueness part of the Riesz representation theorem, there exist a real-valued $g\in C(E)$ 
and $\alpha\in\R$ such that
$\int_{E}g\,d\nu_1 < \alpha < \int_{E}g\,d\nu_2$. Then
\[U=\left\{\nu\in Z :  \int_{E}g\,d\nu<\alpha\right\}\quad\text{ and }\quad V=\left\{\nu\in Z : \int_{E}g\,d\nu>\alpha\right\}\] 
are open and disjoint subsets of $Z$, and
$\nu_1\in U$ and $\nu_2\in V$.
Thus $Z$ is a Hausdorff space.

Next we prove (b). Given $\nu\in Z$, the operator norm of $\nu$ on $(C(E), \|\cdot\|_\infty)$ is
$\lVert\nu\rVert=\nu(E)=1$, and therefore
\[
Z\subset \iol{B}(0,1)=\{\nu\in (C(E),\lVert \cdot\rVert_\infty)^*\,:\, \lVert \nu\rVert\le 1\}\,.
\]
Recall that $Z$ is equipped with the relative
weak* topology.
By the Banach--Alaoglu theorem, the closed unit ball $\iol{B}(0,1)$ is
compact in the weak* topology. Hence, in order to show that
$Z$ is compact, it suffices to verify that $Z$ is closed in the weak* topology of $(C(E),\lVert\cdot\rVert_\infty)^*$.
Write 
\[
G=\{g\in C(E)\,:\,g(E)\subset [0,\infty)\}\,.
\]
For every $g\in G$ we define $\Lambda_g\colon (C(E),\lVert\cdot \rVert_\infty)^*\to \mathbb{R}$, $\Lambda_g(\nu)=\int_E g\,d\nu$. 
Then the functionals
$\Lambda_g$ are continuous for every $g\in G$ and 
\[
Z=\Lambda_1^{-1}(\{1\})\cap \bigcap_{g\in G} \Lambda_g^{-1}([0,\infty))\,.
\]
Indeed, above the inclusion
from left to right is immediate. Conversely, if $\nu\in (C(E),\lVert\cdot \rVert_\infty)^*$
belongs to the set on the right-hand side, then
$\nu(E)=1$ and $0\le \int_E g\,d\nu<\infty$ for all $g\in C(E)$ 
satisfying $0\le g< \infty$ in $E$. Hence $\nu$ is a nonnegative linear functional on $C(E)$, and 
\cite[Theorem~2.14]{MR924157} implies that $\nu\in\mathcal{M}^+(E)$.
Thus $\nu\in Z$, and it follows that
$Z$ is closed in $(C(E),\lVert\cdot \rVert_\infty)^*$ since it
is an intersection of closed sets.

It remains to prove condition (c).
Since $E\subset X$ is compact, by  Stone--Weierstrass theorem and Urysohn's lemma, the
Banach space $(C(E),\lVert \cdot\rVert_\infty)$ is separable.
Since $Z\subset (C(E),\lVert \cdot\rVert_\infty)^*$ is weak* compact,
the weak* topology relative to $Z$ is metrizable.
Hence, it suffices to show sequential lower semicontinuity.
To this end, we fix $f\in W$. Recall that
\[
\Hop_\beta f(x)=\sum_{n=n_0}^\infty 2^{-\beta n}\int_{X} \frac{\mathbf{1}_{B(y,2^{-n})}(x)}{\mu(B(y,2^{-n}))}f_n(y)\,d\mu(y)\,,\qquad x\in X.
\]
For each fixed $n\ge n_0$, we define a function $g_n\colon E\to [0,\infty]$, 
\[
g_n(x)=2^{-\beta n}\int_{X} \frac{\mathbf{1}_{B(y,2^{-n})}(x)}{\mu(B(y,2^{-n}))}f_n(y)\,d\mu(y)\,,\qquad x\in E\,.
\]
Let $x\in E$ and let $(x_j)_{j\in\N}$ be a sequence
in $E$ such that $x_j\to x$ in $E$. Then
\[
\mathbf{1}_{B(y,2^{-n})}(x)\le\liminf_{j\to\infty} \mathbf{1}_{B(y,2^{-n})}(x_j)
\]
for all $y\in X$.
Thus, by Fatou's lemma,
\begin{align*}
g_n(x)&\le 2^{-\beta n}\int_{X} \liminf_{j\to\infty}\left(\frac{\mathbf{1}_{B(y,2^{-n})}(x_j)}{\mu(B(y,2^{-n}))}f_n(y)\right)\,d\mu(y)\\
&\le \liminf_{j\to\infty} 2^{-\beta n}\int_{X} \frac{\mathbf{1}_{B(y,2^{-n})}(x_j)}{\mu(B(y,2^{-n}))}f_n(y)\,d\mu(y)= \liminf_{j\to\infty} g_n(x_j)\,.
\end{align*}
Hence, the function $g_n$ is lower semicontinuous in $E$,
and therefore also $\mathcal{H}_\beta f=\sum_{n=n_0}^\infty g_n$ is lower semicontinuous in $E$. 
By~\cite[Proposition~4.2.2]{MR3363168}, there exists a sequence $(h_k)_{k\in\N}$ of continuous functions in $E$ such that $0\le h_1(x)\le h_2(x)\le \dotsb\le \mathcal{H}_{\beta} f(x)$
and $\lim_{k\to\infty} h_k(x)=\mathcal{H}_{\beta} f(x)$ for every $x\in E$.
Now, let $\nu\in Z$ and let $(\nu_i)_{i\in \N}$ be a sequence in $Z$ such that $\nu_i\to \nu$ in $Z$. Then, for all $k\in\N$, 
\begin{align*}
 \int_E h_k(x)\,d\nu(x)&=\lim_{i\to\infty} \int_E h_k(x)\,d\nu_i(x)
  \le \liminf_{i\to\infty} \int_E \mathcal{H}_\beta f(x)\,d\nu_i(x)\\
& =\liminf_{i\to\infty} \int_X \mathcal{H}_\beta f(x)\,d\nu_i(x)\,.
\end{align*}
By using also the monotone convergence theorem, we obtain
\begin{align*}
\mathcal{E}_{\beta}(\nu,f)&=\int_X \mathcal{H}_\beta f(x)\,d\nu(x)=\int_E \mathcal{H}_\beta f(x)\,d\nu(x)
=\lim_{k\to\infty} \int_E h_k(x)\,d\nu(x)
\\&\le \liminf_{i\to\infty} \int_X \mathcal{H}_\beta f(x)\,d\nu_i(x)
=\liminf_{i\to\infty} \mathcal{E}_\beta(\nu_i,f)\,.
\end{align*}
This shows that $\mathcal{E}_{\beta}(\,\cdot\,,f)$ is
sequentially lower semicontinuous, and the proof is complete.
\end{proof}

\section{Muckenhoupt--Wheeden theorem}\label{s.MW}

The original formulation of the celebrated Muckenhoupt--Wheeden Theorem in \cite{MR340523}
bounds the $L^p$-norm of the Riesz potential by the $L^p$-norm of a fractional maximal function
in the (suitably weighted) Euclidean space.
In Theorem~\ref{t.M-W} we present a variant of this result for the 
mixed $L^p(X,l^1)$-norm of the operator $\widecheck{\mathcal{H}}_\beta$.
Theorem~\ref{t.M-W} could actually be obtained as a consequence of a very general 
metric space formulation of the Muckenhoupt--Wheeden Theorem in~\cite{MR1962949}, 
but for the sake of completeness and transparency we give below a more 
organic and self-contained proof. In fact, the application of the general result from \cite{MR1962949} 
would still require rather tedious verification of the various assumptions;
compare for instance to~\cite[Theorem~4.2]{MR3900847}. 
Our proof of Theorem~\ref{t.M-W} follows the general approach in~\cite[Theorem~3.6.1]{MR1411441},
based on a good $\lambda$-inequality, but the present setting requires several modifications. 

For the formulation of Theorem~\ref{t.M-W}, we need fractional maximal functions.

\begin{definition}\label{d.frac_max}
Let $\beta\ge 0$ and let $\nu$ be a nonnegative Borel
measure in $X$.
The fractional maximal function $M_\beta\nu$ is defined by
\begin{equation}\label{e.max_funct_def}
M_\beta\nu(x)=\sup_{0<r\le 2^{-n_0}} r^\beta \frac{\nu(B(x,r))}{\mu(B(x,r))}\,,\qquad x\in X\,.
\end{equation}
When $X$ is unbounded, the supremum is taken over all radii $0<r<\infty$.
When $\beta=0$ we get the centered maximal function
which is denoted by $M\nu=M_0\nu$.
\end{definition}

We need the following technical lemma concerning lower semicontinuity.
Recall that here $\iol{B}(x,2^{-n})$ is the closed
ball centered at $x$.

\begin{lemma}\label{l.lowersc}
Let $\beta>0$ and let $\nu$ be a nonnegative Borel
measure in $X$. Then the function 
\[
\sum_{n=n_0}^{\infty} 2^{-\beta n}\frac{\nu(B(x,2^{-n}))}{\mu(\iol{B}(x,2^{-n}))}\,,\qquad x\in X\,,
\]
is lower semicontinuous in $X$.
\end{lemma}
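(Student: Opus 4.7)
The plan is to show the lower semicontinuity term-by-term and then pass to the (nonnegative) series.

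For each fixed $n\ge n_0$, I would split the term into numerator and denominator and handle them separately. The numerator $x\mapsto \nu(B(x,2^{-n}))$ is lower semicontinuous: if $x_j\to x$, then for any $y\in B(x,2^{-n})$ we have $d(x,y)<2^{-n}$, hence eventually $d(x_j,y)<2^{-n}$, i.e.\ $y\in B(x_j,2^{-n})$. Since $\mathbf{1}_{B(y,2^{-n})}(x)=\mathbf{1}_{B(x,2^{-n})}(y)$, this shows
\[
\mathbf{1}_{B(x,2^{-n})}(y)\le \liminf_{j\to\infty}\mathbf{1}_{B(x_j,2^{-n})}(y)\qquad\text{for every } y\in X,
\]
and Fatou's lemma applied to $\nu$ gives $\nu(B(x,2^{-n}))\le \liminf_{j\to\infty}\nu(B(x_j,2^{-n}))$. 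This is precisely the same argument already used at the end of the proof of Lemma~\ref{l.m_assumptions}, applied here to $\nu$ in place of $\mu$.

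Dually, the denominator $x\mapsto \mu(\iol{B}(x,2^{-n}))$ is upper semicontinuous. If $x_j\to x$ and $y\in X$ satisfies $y\in\iol{B}(x_j,2^{-n})$ for infinitely many $j$, then $d(x,y)\le d(x,x_j)+d(x_j,y)\to 2^{-n}$, so $y\in\iol{B}(x,2^{-n})$; hence $\limsup_j \mathbf{1}_{\iol{B}(x_j,2^{-n})}(y)\le \mathbf{1}_{\iol{B}(x,2^{-n})}(y)$. Once $d(x_j,x)<2^{-n}$ we have $\iol{B}(x_j,2^{-n})\subset \iol{B}(x,2^{-n+1})$, which has finite $\mu$-measure, so the reverse Fatou lemma yields
\[
\limsup_{j\to\infty}\mu(\iol{B}(x_j,2^{-n}))\le \mu(\iol{B}(x,2^{-n})).
\]
Note that $\mu(\iol{B}(x,2^{-n}))\ge\mu(B(x,2^{-n}))>0$ by our standing assumption on $\mu$, so the quotient is well-defined.

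Combining these two facts, for any $x_j\to x$ and nonnegative $a_j$, $b_j>0$ one has $\liminf_j (a_j/b_j)\ge (\liminf_j a_j)/(\limsup_j b_j)$, so
\[
\frac{\nu(B(x,2^{-n}))}{\mu(\iol{B}(x,2^{-n}))}\le \liminf_{j\to\infty}\frac{\nu(B(x_j,2^{-n}))}{\mu(\iol{B}(x_j,2^{-n}))}\,,
\]
showing that each term in the series is lower semicontinuous and nonnegative. Finally, since a pointwise supremum of lower semicontinuous functions is lower semicontinuous, and the partial sums $\sum_{n=n_0}^{N} 2^{-\beta n}\nu(B(\cdot,2^{-n}))/\mu(\iol{B}(\cdot,2^{-n}))$ increase to the full sum as $N\to\infty$, the claim follows.

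The only mildly delicate step is the reverse Fatou argument for the closed-ball measure; the key is locating a finite $\mu$-dominating set (here $\iol{B}(x,2^{-n+1})$) that contains $\iol{B}(x_j,2^{-n})$ for all sufficiently large $j$. Once this is in place, the rest is a straightforward assembly.
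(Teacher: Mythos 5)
Your proof is correct and follows essentially the same route as the paper: you show $\nu(B(\cdot,2^{-n}))$ is lower semicontinuous via Fatou, show $\mu(\iol{B}(\cdot,2^{-n}))$ is upper semicontinuous via a reverse-Fatou (dominated) argument, and then pass to the series through increasing partial sums. The only cosmetic difference is that the paper establishes lower semicontinuity of the two factors $\nu(B(\cdot,2^{-n}))$ and $\mu(\iol{B}(\cdot,2^{-n}))^{-1}$ separately and invokes standard closure properties of semicontinuous functions, whereas you combine the $\liminf$ and $\limsup$ of the quotient directly; the latter is fine here because eventually $B(x,2^{-n-1})\subset\iol{B}(x_j,2^{-n})$, so $\limsup_j\mu(\iol{B}(x_j,2^{-n}))>0$ and the division is unambiguous.
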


\begin{proof}
Fix $n\in\Z$ such that $n\ge n_0$.
By the properties of semicontinuous functions, see \cite[p.~362]{MR1726779}, it suffices to show that the nonnegative functions
$f(x)=\nu(B(x,2^{-n}))$ and $g(x)=\mu(\iol{B}(x,2^{-n}))^{-1}$ are lower semicontinuous in $X$.
Let $x\in X$ and let $(x_j)_{n\in\mathbb{N}}$ be a sequence in $X$ such that $x_j\to x$ in $X$. 
By Fatou's lemma,
\begin{align*}
f(x)&=\int_{X} \mathbf{1}_{B(x,2^{-n})}(y)\,d\nu(y)
\le \int_{X} \liminf_{j\to\infty} \mathbf{1}_{B(x_j,2^{-n})}(y)\,d\nu(y)\\
&\le \liminf_{j\to\infty} \int_{X}\mathbf{1}_{B(x_j,2^{-n})}(y)\,d\nu(y)=\liminf_{j\to\infty} f(x_j)\,,
\end{align*}
showing that $f$ is lower semicontinuous in $X$.

Define $h(x)=\mu(\iol{B}(x,2^{-n}))$, $x\in X$. Then,
if $(x_j)_{j\in\N}$ is a sequence converging to $x\in X$ as above, we have
\begin{align*}
h(x)=\int_{X} \mathbf{1}_{\iol{B}(x,2^{-n})}(y)\,d\mu(y)
\ge  \int_{X} \limsup_{j\to\infty} \mathbf{1}_{\iol{B}(x_j,2^{-n})}(y)\,d\mu(y)\,.
\end{align*}
Write $\iol{B}_j =  \iol{B}(x_j,2^{-n})$ and
$B=B(x,M)$, where $M>0$ is chosen so that $\iol{B}_j\subset B$ for all $j\in\N$. Then
$\mathbf{1}_{\iol{B}_j}\le \mathbf{1}_{B}\in L^1(d\mu)$ for all $j\in\N$. 
By applying Fatou's lemma to the nonnegative functions $\mathbf{1}_{B}-\mathbf{1}_{\iol{B}_j}$, 
we obtain
\begin{align*}
\mu(B)-\int_{X} \limsup_{j\to\infty}\mathbf{1}_{\iol{B}_j}(y)\,d\mu(y)
&=\int_{X} \liminf_{j\to\infty} \bigl(\mathbf{1}_{B}-\mathbf{1}_{\iol{B}_j}\bigr)(y)\,d\mu(y)\\
&\le \liminf_{j\to\infty}\int_{X}  \bigl(\mathbf{1}_{B}-\mathbf{1}_{\iol{B}_j}\bigr)(y)\,d\mu(y)\\
&=\mu(B)-\limsup_{j\to\infty}\int_{X}\mathbf{1}_{\iol{B}_j}(y)\,d\mu(y)\,.
\end{align*}
Consequently, we see that
\[
h(x)\ge \int_{X} \limsup_{j\to\infty} \mathbf{1}_{\iol{B}_j}(y)\,d\mu(y)\ge \limsup_{j\to\infty}\int_{X}\mathbf{1}_{\iol{B}_j}(y)\,d\mu(y)=\limsup_{j\to\infty} h(x_j)\,.
\]
Therefore
\[
g(x)=\frac{1}{h(x)}\le \frac{1}{\limsup_{j\to\infty} h(x_j)}=\liminf_{j\to\infty} \frac{1}{h(x_j)}=\liminf_{j\to\infty} g(x_j)\,,
\]
and thus also $g$ is lower semicontinuous in $X$.
\end{proof}

The following proof is an adaptation of the proof of~\cite[Theorem~3.6.1]{MR1411441}.
As usual, it is possible  to generalize Theorem \ref{t.M-W} to Muckenhoupt $A_\infty$ weighted setting.

\begin{theorem}\label{t.M-W}
Let $1<p<\infty$ and $\beta>0$. 
Assume 
that $\mu$ satisfies the quantitative reverse doubling condition~\eqref{e.reverse_doubling} for some exponent $\sigma>\beta$,
and let $\nu$ be a nonnegative Borel
measure in $X$. 
Then there exists
a constant $C=C(c_\sigma,c_\mu,\beta,\sigma,p)$ such that
\begin{equation}\label{e.mconc}
 \big\lVert \widecheck{\mathcal{H}}_\beta \nu\big\rVert_{L^p(X;l^1)} \le C\lVert M_\beta \nu\rVert_{L^p(X)}\,.
\end{equation}
\end{theorem}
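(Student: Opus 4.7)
The plan is to derive \eqref{e.mconc} via a good-$\lambda$ inequality, following the general scheme of \cite[Theorem~3.6.1]{MR1411441}. Denote $F(x)=\big\|\widecheck{\mathcal{H}}_\beta\nu(x)\big\|_{l^1}=\sum_{n=n_0}^{\infty} 2^{-\beta n}\nu(B(x,2^{-n}))/\mu(B(x,2^{-n}))$. By Lemma~\ref{l.lowersc} and the doubling property, the variant of $F$ with $\mu(\iol{B}(\cdot,2^{-n}))$ in the denominator is lower semicontinuous and comparable to $F$ up to constants, so the level sets $\Omega_\lambda=\{F>\lambda\}$ are essentially open. First I will prove that, for some $a>1$ and $\delta>0$, the good-$\lambda$ inequality
\[
\mu\bigl(\{F>a\lambda,\ M_\beta\nu\le\gamma\lambda\}\bigr)\le C\gamma^\delta\mu(\Omega_\lambda)
\]
holds for all $\lambda>0$ and $\gamma\in(0,1)$. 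The estimate~\eqref{e.mconc} then follows by the standard layer-cake integration, after a preliminary truncation (e.g.\ restricting the defining sum to $n_0\le n\le N$) to guarantee that the intermediate quantities are finite during the bootstrap step, followed by a passage to the limit $N\to\infty$.

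To prove the good-$\lambda$ inequality, I would apply a Whitney-type decomposition via Vitali's covering lemma: select pairwise disjoint balls $B_i=B(x_i,r_i)$ with $\Omega_\lambda\subset\bigcup_i 5B_i$ and $r_i$ comparable to $\dist(x_i,X\setminus\Omega_\lambda)$, together with companion points $z_i\in X\setminus\Omega_\lambda$ satisfying $d(x_i,z_i)\le Cr_i$ and $F(z_i)\le\lambda$. Fix $i$, pick $n_i\in\Z$ with $2^{-n_i}\sim r_i$ and a large integer $K$, and split $F=F_{\text{far}}+F_{\text{near}}$, where $F_{\text{far}}$ sums the terms with $n<n_i-K$ (large balls) and $F_{\text{near}}$ the remaining ones. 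When $2^{-n}\gg r_i$, the balls $B(x,2^{-n})$ and $B(z_i,2^{-n})$ contain each other after constant dilations, so doubling yields a termwise comparison between the corresponding summands. Summation yields $F_{\text{far}}(x)\le C_0 F(z_i)\le C_0\lambda$ for every $x\in 5B_i$. Choosing $a:=C_0+1$, the set of interest inside $5B_i$ satisfies $F_{\text{near}}(x)>\lambda$.

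For the near part, only the values of $\nu$ on $B(x_i,C_1r_i)$ enter $F_{\text{near}}(x)$ when $x\in 5B_i$, so $F_{\text{near}}(x)\le\widetilde F(x)$ for $\tilde\nu:=\nu|_{B(x_i,C_1r_i)}$ and $\widetilde F$ the analogue of $F$ built from $\tilde\nu$. The quantitative reverse doubling~\eqref{e.reverse_doubling} with $\sigma>\beta$ makes the geometric series $\sum_{k\ge 0}2^{-k(\sigma-\beta)}$ convergent, which, combined with a dyadic annulus rearrangement, yields the pointwise comparison $\widetilde F(x)\le C\,I_\beta\tilde\nu(x)$ with the metric Riesz potential~\eqref{e.riesz_intro}. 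Provided $5B_i$ contains at least one point $x_*$ with $M_\beta\nu(x_*)\le\gamma\lambda$ (otherwise the set to be estimated is empty), the total mass of $\tilde\nu$ is controlled by
\[
\tilde\nu(X)\le\nu(B(x_*,2C_1r_i))\le C\gamma\lambda\, r_i^{-\beta}\mu(B_i),
\]
and a suitable weak-type bound for $I_\beta$ applied to a measure (again exploiting $\sigma>\beta$) then gives $\mu(\{x\in 5B_i:F_{\text{near}}(x)>\lambda\})\le C\gamma^\delta\mu(B_i)$. Summation over $i$ via the bounded overlap of $\{5B_i\}$ completes the good-$\lambda$ inequality.

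The main technical obstacle will be precisely this last weak-type bound for $I_\beta\tilde\nu$ on the Whitney ball $5B_i$: in the Euclidean argument of \cite[Theorem~3.6.1]{MR1411441} it is immediate from the Ahlfors $n$-regularity of Lebesgue measure, but here only doubling and reverse doubling with $\sigma>\beta$ are available. One must carefully arrange the localized bound so that the dimension-dependent factors arising from the weak-type exponent $p_0=\sigma/(\sigma-\beta)$ cancel exactly against the ratio $\mu(B_i)/r_i^\sigma$ through the reverse doubling; this is the source of the modifications alluded to in the paragraph preceding the theorem.
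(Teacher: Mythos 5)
Your high-level strategy is the same as the paper's: a good-$\lambda$ inequality proved via a Whitney decomposition of the open super-level set (made open by switching to closed balls in the denominator, as in Lemma~\ref{l.lowersc}), together with a far/near splitting and a companion point in the complement of the level set. The far-scale estimate you describe is essentially the paper's bound for $h_2(x)$ via the point $w\in 9B_i\setminus\Omega$, and that part is fine, modulo a preprocessing reduction you omitted (when $X$ is bounded the paper first arranges $\nu$ to vanish on a macroscopic ball, so that the case $\Omega=X$ can be handled separately).

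The genuine gap is the near part. You reduce it to a \emph{weak-type bound for the Riesz potential} $I_\beta\tilde\nu$ and acknowledge this as the ``main technical obstacle,'' speculating that the weak-type exponent $p_0=\sigma/(\sigma-\beta)$ must be cancelled against $\mu(B_i)/r_i^\sigma$. That is not how the paper proceeds, and the detour is unnecessary. The paper decomposes $\nu$ (not the scales) into $\nu_1=\nu|_{10B_i}$ and $\nu_2=\nu|_{X\setminus 10B_i}$, bounds $h_2$ by the companion point, and then establishes the \emph{pointwise} inequality
\[
h_1(x)\le C\,r_i^\beta\, M\nu_1(x),\qquad x\in B_i,
\]
where $M$ is the ordinary centered ($\beta=0$) maximal operator. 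To see this, the defining series is split at the scale $2^{-k}\sim r_i$: the small-scale tail $\sum_{n>k}$ is a convergent geometric series giving $r_i^\beta M\nu_1(x)$ directly, while the large-scale head $\sum_{n\le k}$ is controlled by the quantitative reverse doubling (this is the only place $\sigma>\beta$ is used), giving $C r_i^\beta\,\nu_1(X)/\mu(B(x,2^{-k}))$, which is again dominated by $r_i^\beta M\nu_1(x)$ because $\nu_1$ is supported in $10B_i$. Once this pointwise bound is in hand, the only external estimate needed is the weak $(1,1)$ bound for $M$ on a doubling space, which requires no reverse doubling and no exponent $p_0$, and the good-$\lambda$ inequality comes out \emph{linear} in the small parameter $\varepsilon$ rather than with a fractional power $\gamma^\delta$. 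So, while your scheme is correct in outline, the step you flagged as an obstacle is a real one for the route through $I_\beta$; the paper avoids it entirely by never invoking a weak-type estimate for $I_\beta$, reducing instead to the elementary weak $(1,1)$ inequality for the Hardy--Littlewood maximal function.
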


\begin{proof}
Observe that we have, for every $x\in X$,
\[
\big\lVert \widecheck{\mathcal{H}}_\beta\nu(x)\big\rVert_{l^1}
=\sum_{n=n_0}^{\infty} 2^{-\beta n}\frac{\nu(B(x,2^{-n}))}{\mu(B(x,2^{-n}))}
\le c_\mu\sum_{n=n_0}^{\infty} 2^{-\beta n}\frac{\nu(B(x,2^{-n}))}{\mu(\iol{B}(x,2^{-n}))}\,.
\]
Throughout the proof, we use the function
\[
h(x)=\sum_{n=n_0}^{\infty} 2^{-\beta n}\frac{\nu(B(x,2^{-n}))}{\mu(\iol{B}(x,2^{-n}))}\,,\qquad x\in X\,.
\]
The advantage of $h$ compared to $\big\lVert \widecheck{\mathcal{H}}_\beta\nu(x)\big\rVert_{l^1}$ is
the lower semicontinuity in $X$, given by Lemma~\ref{l.lowersc}.

If $X$ is bounded, we can make a further reduction concerning the measure $\nu$
by assuming that there
exists $z\in X$ such that 
\begin{equation}\label{e.extra}
\nu\bigl(B(z,\tfrac 1 3\diam(X)\bigr)=0\,.
\end{equation}
Indeed,
if $X$ is bounded, we fix points $x,y\in X$ such that
$d(x,y)\ge (2/3)\diam(X)$. Let $B_x=B(x,(1/3)\diam(X))$
and $B_y=B(y,(1/3)\diam(X))$, whence $B_x\cap B_y=\emptyset$.
Define Borel measures $\nu_x$ and $\nu_y$ in $X$ by setting
$\nu_x(A)=\nu(A\setminus B_x)$ and $\nu_y(A)=\nu(A\setminus B_y)$
for all Borel sets $A\subset X$.
Then
\[
\nu(A)=\nu((A\setminus B_x)\cup (A\setminus B_y))\le \nu_x(A)+\nu_y(A)\,,
\]
for all Borel sets $A\subset X$, and it follows that 
\[
\big\lVert \widecheck{\mathcal{H}}_\beta \nu\big\rVert_{L^p(X;l^1)}\le 
\big\lVert \widecheck{\mathcal{H}}_\beta\nu_x\big\rVert_{L^p(X;l^1)}
+\big\lVert \widecheck{\mathcal{H}}_\beta\nu_y\big\rVert_{L^p(X;l^1)}\,.
\]
Observe also that $M_\beta \nu_x\le M_\beta\nu$ and
$M_\beta \nu_y\le M_\beta\nu$.
Hence, if $X$ is bounded, then
by considering the measures $\nu_x$ and $\nu_y$ instead of $\nu$
we may assume that~\eqref{e.extra} holds for some $z\in X$.

By arguing as in \cite[p.~73]{MR1411441}, 
for the claim~\eqref{e.mconc}
it is sufficient to establish the following good $\lambda$-inequality: 
There exists constants $a=2\cdot c_\mu^5(1+4\cdot 2^{4\beta})>1$, $C_1=C(c_\sigma,c_\mu,\beta,\sigma)>0$ and 
$C_2=C(c_\mu)\ge 1$ such that for
all $\lambda>0$ and all $0<\varepsilon\le (3c_\mu^2)^{-1}$ we have 
\begin{equation}\label{e.goodlambda}
\begin{split}
&\mu(\{x\in X\,:\, h(x)>a\lambda\})\\
&\quad \le C_1\varepsilon\mu(\{x\in X\,:\, h(x)>\lambda\})
+C_2\mu(\{x\in X\,:\, M_\beta \nu(x)>\varepsilon\lambda\})\,.
\end{split}
\end{equation}
To this end, fix $\lambda>0$ and $0<\varepsilon\le (3c_\mu^2)^{-1}$. 
Since the function $h$ is lower semicontinuous in $X$, the level set
$\Omega=\{x\in X \,:\, h(x)> \lambda\}$ is open in $X$.
Since $a>1$, we may assume 
that $\Omega\not=\emptyset$. 

We first consider the case $\Omega=X$.
If $X$ is unbounded, then \eqref{e.goodlambda} holds whenever $C_1>0$
since $\mu(X)=\infty$ by the assumed reverse doubling condition 
and the assumption $\diam(X)=\infty$.
If $X$ is bounded, there exists $z\in X$ such that
\eqref{e.extra} holds. Since $z\in X=\Omega$, we see that
$h(z)>\lambda$. Recall that
by the definition of $n_0$ we have $X=B(x,2^{-n_0})$ for every $x\in X$. 
If $x\in X$, we obtain from \eqref{e.extra} 
that
\begin{align*}
\lambda < h(z)&=\sum_{n=n_0}^{n_0+2} 2^{-\beta n}\frac{\nu(B(z,2^{-n}))}{\mu(\iol{B}(z,2^{-n}))}\le 3c_\mu^2\cdot 2^{-\beta n_0}\frac{\nu(X)}{\mu(X)}\\
&=3c_\mu^2\cdot 2^{-\beta n_0}\frac{\nu(B(x,2^{-n_0}))}{\mu(B(x,2^{-n_0}))}
\le 3c_\mu^2 M_\beta \nu(x)\,.
\end{align*}
It follows that $M_\beta \nu(x)>(3c_\mu^2)^{-1}\lambda$ for every $x\in X$, 
and consequently \eqref{e.goodlambda} holds whenever $C_2\ge 1$ 
since $\varepsilon\le (3c_\mu^2)^{-1}$.
This concludes the proof in the case $\Omega=X$.

In the sequel, we assume that $\Omega\not=\emptyset$ and $X\setminus \Omega\neq\emptyset$. 
By~\cite[Proposition~4.1.15]{MR3363168}, there exists
a countable collection $\mathcal{W}=\{B(x_i,r_i)\}$ of the so-called Whitney balls
in $\Omega$ such that
$\Omega=\bigcup_{i}B(x_i,r_i)$ and
$\sum_{i} \mathbf{1}_{B(x_i,r_i)}\le N=C(c_\mu)$, where
\[
r_i=\tfrac{1}{8}\dist(x_i,X\setminus\Omega)\,.
\]
We write $\mathcal{W}=\mathcal{W}_1\cup \mathcal{W}_2$, where
the collection $\mathcal{W}_1$ consists of all Whitney balls $B_i$ such that
$M_\beta \nu(x)> \varepsilon \lambda$ for all $x\in B_i$,
and $\mathcal{W}_2=\mathcal{W}\setminus \mathcal{W}_1$.
Thus 
\begin{equation}\label{e.z_point}
\{x\in B_i \,:\, M_\beta \nu(x)\le \varepsilon \lambda\}\neq\emptyset
\end{equation}
for all $B_i\in \mathcal{W}_2$.
Since $a>1$ and $\mathbf{1}_\Omega\le \sum_i \mathbf{1}_{B_i}$, we have
\begin{align*}
&\mu(\{x\in X \,:\, h(x)>a\lambda\})
=\int_{\{x\in X\,:\, h(x)>a\lambda\}} \mathbf{1}_\Omega(y)\,d\mu(y)\\
&\qquad \le \sum_{B_i\in\mathcal{W}_1}\int_{\{x\in X\,:\, h(x)>a\lambda\}} \mathbf{1}_{B_i}(y)\,d\mu(y)+\sum_{B_i\in\mathcal{W}_2}\int_{\{x\in X\,:\, h(x)>a\lambda\}} \mathbf{1}_{B_i}(y)\,d\mu(y)\,.
\end{align*}
For the first sum above we obtain
\begin{align*}
&\sum_{B_i\in\mathcal{W}_1}\int_{\{x\in X\,:\, h(x)>a\lambda\}} \mathbf{1}_{B_i}(y)\,d\mu(y)
 \le \sum_{B_i\in\mathcal{W}_1}\mu(B_i)\\
&\qquad \le 
\sum_{B_i\in\mathcal{W}_1}\mu(\{x\in B_i\,:\, M_\beta \nu(x)> \varepsilon \lambda\})
\le \int_{\{x\in X\,:\, M_\beta \nu(x)> \varepsilon \lambda\}}\sum_{B_i\in\mathcal{W}_1}\mathbf{1}_{B_i}(y)\,d\mu(y)\\
&\qquad \le N\mu(\{x\in X\,:\, M_\beta \nu(x)> \varepsilon \lambda\})\,.
\end{align*}
To estimate the second sum, we claim that 
\begin{equation}\label{e.thisc}
\mu(\{x\in B_i\,:\, h(x)>a\lambda\})
\le C(c_\sigma,c_\mu,\beta,\sigma)\varepsilon \mu(B_i)
\end{equation}
holds for all $B_i\in\mathcal{W}_2$. Using~\eqref{e.thisc}, we see that 
\begin{align*}
&\sum_{B_i\in\mathcal{W}_2}\int_{\{x\in X\,:\, h(x)>a\lambda\}} \mathbf{1}_{B_i}(y)\,d\mu(y)\le C(c_\sigma,c_\mu,\beta,\sigma)\varepsilon\sum_{B_i\in\mathcal{W}_2}\mu(B_i)\\
&\qquad=C(c_\sigma,c_\mu,\beta,\sigma)\varepsilon\int_{\{x\in X\,:\, h(x)>  \lambda\}}\sum_{B_i\in\mathcal{W}_2}\mathbf{1}_{B_i}(y)\,d\mu(y)\\
&\qquad \le C(c_\sigma,c_\mu,\beta,\sigma)N \varepsilon\mu(\{x\in X\,:\, h(x)>\lambda\})\,.
\end{align*}
By combining the estimates above, we conclude that the 
good $\lambda$-inequality~\eqref{e.goodlambda} holds with
$C_1=C(c_\sigma,c_\mu,\beta,\sigma)N$
and $C_2=N$, where $N=C(c_\mu)$.

Thus, to complete the proof, we need to show that \eqref{e.thisc}
holds for all $B_i\in \mathcal{W}_2$.
We fix $B_i\in \mathcal{W}_2$ and 
define Borel measures $\nu_1$ and $\nu_2$ by setting $\nu_1(A)=\nu(A\cap 10B_i)$ and 
$\nu_2(A)=\nu(A\setminus 10B_i)$ for all Borel sets $A\subset X$.
We also write, for all $x\in X$, that
\[
h_1(x)=\sum_{n=n_0}^{\infty} 2^{-\beta n}\frac{\nu_1(B(x,2^{-n}))}{\mu(\iol{B}(x,2^{-n}))}\qquad\text{ and }\qquad h_2(x)=\sum_{n=n_0}^{\infty} 2^{-\beta n}\frac{\nu_2(B(x,2^{-n}))}{\mu(\iol{B}(x,2^{-n}))}\,.
\]
Then $\nu=\nu_1+\nu_2$ and $h=h_1+h_2$.
We prove 
inequality \eqref{e.thisc} by showing that
\begin{equation}\label{e.fwanted}
\{x\in B_i\,:\, h(x)>a\lambda\}\subset \left\{x\in B_i\,:\, h_1(x)>a\lambda/2\right\}
\end{equation}
and
\begin{equation}\label{e.twanted}
\mu\left(\left\{x\in B_i\,:\, h_1(x)>a\lambda/2\right\}\right)\le  C(c_\sigma,c_\mu,\beta,\sigma)\varepsilon\mu(B_i)\,. 
\end{equation}

For the inclusion in \eqref{e.fwanted}, we assume that 
$x\in B_i$ is such that $h(x) >a\lambda$, and we first aim to show that $ h_2(x)\le \frac{a}{2}\lambda$. If $10B_i=X$, then 
$h_2(x)=0$. Hence, in the sequel we may assume that $X\setminus 10B_i\not=\emptyset$.
Let $k\ge n_0$ be the largest integer such that 
$B(x,2^{-k})\setminus 10B_i\not=\emptyset$. Then 
$2^{-k}\ge 9r_i> r_i$, and by the definition of $\nu_2$, 
\[
h_2(x)=\sum_{n=n_0}^{k} 2^{-\beta n}\frac{\nu_2(B(x,2^{-n}))}{\mu(\iol{B}(x,2^{-n}))}\,.
\]
Since $9r_i=\frac{9}{8}\dist(x_i,X\setminus\Omega)$,
there exists $w\in 9B_i\setminus \Omega$. Then
$h(w)\le\lambda$ by the definition of the set $\Omega$.

Fix $n\in\Z$ such that $n_0\le n\le k$. Then
\[
\nu_2(B(x,2^{-n}))\le \nu_2(B(w,2^{-n+4}))\le \nu(B(w,2^{-n+4}))
\]
and
\[
\mu(\iol{B}(w,2^{-n+4}))\le \mu(B(x,2^{-n+5}))\le c_\mu^5\mu(\iol{B}(x,2^{-n}))\,.
\]
From these estimates we obtain
\[
2^{-\beta n}\frac{\nu_2(B(x,2^{-n}))}{\mu(\iol{B}(x,2^{-n}))}\le c_\mu^52^{-\beta(n-4)}\frac{\nu(B(w,2^{-(n-4)}))}{\mu(\iol{B}(w,2^{-(n-4)}))}\,,
\]
and so
\[
h_2(x)\le c_\mu^5\sum_{n=n_0}^{k} 2^{-\beta(n-4)}\frac{\nu(B(w,2^{-(n-4)}))}{\mu(\iol{B}(w,2^{-(n-4)}))}\le c_\mu^5(1+4\cdot 2^{4\beta})h(w) 
\le \frac{a}{2}\lambda\,. 
\]
It follows that
\[
a\lambda< h(x) = h_1(x)+h_2(x)\le h_1(x) +\frac{a}{2}\lambda\,,
\]
and thus
$h_1(x)>a\lambda/2$, proving the inclusion in~\eqref{e.fwanted}.

Next we show that \eqref{e.twanted} holds. 
Let $x\in B_i$ be such that $h_1(x)>a\lambda/2$ and choose
$k\in\Z$ satisfying $2^{-k-1}<r_i\le 2^{-k}$. Then
\[
\sum_{n=k+1}^{\infty} 2^{-\beta n}\frac{\nu_1(B(x,2^{-n}))}{\mu(\iol{B}(x,2^{-n}))}
\le M\nu_1(x) \sum_{n=k+1}^{\infty} 2^{-\beta n}\le C(\beta)2^{-\beta k} M\nu_1(x)\le C(\beta)r_i^\beta M\nu_1(x)\,,
\]
where $M$ is the centered maximal operator.
On the other hand, the assumed quantitative reverse doubling condition \eqref{e.reverse_doubling},
with $\sigma>\beta$, gives
\begin{align*}
\sum_{n=n_0}^{k} 2^{-\beta n}\frac{\nu_1(B(x,2^{-n}))}{\mu(\iol{B}(x,2^{-n}))}&\le
\nu_1(X)\sum_{n=n_0}^{k}\frac{2^{-\beta n}}{\mu(B(x,2^{-n}))}\\
&=\nu_1(X)\frac{2^{-\sigma k}}{\mu(B(x,2^{-k}))}\sum_{n=n_0}^{k} 2^{-n(\beta-\sigma)}
\left(\frac{2^{-n}}{2^{-k}}\right)^\sigma \frac{\mu(B(x,2^{-k}))}{\mu(B(x,2^{-n}))}\\
&\le c_\sigma\nu_1(X)\frac{2^{-\sigma k}}{\mu(B(x,2^{-k}))}\sum_{n=-\infty}^{k} 2^{n(\sigma-\beta)}\\
&=C(c_\sigma,\beta,\sigma)\nu_1(X)\frac{2^{-\beta k}}{\mu(B(x,2^{-k}))}\\
&\le C(c_\sigma,\beta,\sigma)\nu_1(X)\frac{r_i^\beta}{\mu(B(x,2^{-k}))}\,.
\end{align*}
Since $\nu_1(X)=\nu(10B_i)$, we obtain
\[
\nu_1(X)\frac{r_i^\beta}{\mu(B(x,2^{-k}))}=r_i^\beta\frac{\nu_1(B(x_i,10r_i))}{\mu(B(x,2^{-k}))}
\le c_\mu^4r_i^\beta\frac{\nu_1(B(x,11r_i))}{\mu(B(x,11 r_i))}\le c_\mu^4 r_i^\beta M\nu_1(x)\,.
\]
By combining the above estimates, we get
\[
a\lambda/2< h_1(x) \le C(c_\sigma,c_\mu,\beta,\sigma)r_i^\beta M\nu_1(x)=\kappa M\nu_1(x)\,, 
\]
where we have written $\kappa=C(c_\sigma,c_\mu,\beta,\sigma)r_i^\beta$.
We conclude that
\begin{equation}\label{e.c1}
\mu\left(\left\{x\in B_i\,:\, h_1(x)>a\lambda/2\right\}\right)
\le \mu(\{x\in X\,:\,M\nu_1(x)>a\lambda/(2\kappa)\})\,.
\end{equation}
The proof of \cite[Lemma~3.12]{MR2867756} shows that 
\begin{equation}\label{e.c2}
\begin{split}
\mu(\{x\in X\,:\,M\nu_1(x)>a\lambda/(2\kappa)\})&\le C(c_\mu) \frac{2\kappa}{a\lambda}\nu_1(X)\\
&\le \frac{C(c_\sigma,c_\mu,\beta,\sigma)}{a \lambda} r_i^\beta\frac{\nu_1(X)}{\mu(B_i)}\mu(B_i)\,.
\end{split}
\end{equation}
Since $B_i\in\mathcal{W}_2$, by \eqref{e.z_point} there exists $z\in B_i$ such that $M_\beta \nu(z)\le \varepsilon \lambda$. 
Taking also into account that $r_i\le (1/8)\diam(X)$ in the bounded case, we obtain
\begin{equation}\label{e.c3}
\begin{split}
r_i^\beta\frac{\nu_1(X)}{\mu(B_i)}&=r_i^\beta\frac{\nu(B(x_i,10r_i))}{\mu(B(x_i,r_i))}
\le c_\mu^4(11r_i)^\beta\frac{\nu(B(z,11r_i))}{\mu(B(z,11 r_i))}\\
&\le \left(\frac{11}{8}\right)^\beta c_\mu^4  M_\beta\nu(z)\le \left(\frac{11}{8}\right)^\beta c_\mu^4 \varepsilon \lambda\,.
\end{split}
\end{equation}
By combining the above estimates \eqref{e.c1}, \eqref{e.c2} and \eqref{e.c3}, we 
conclude that
\[
\mu\left(\left\{x\in B_i\,:\, h_1(x)>a\lambda/2\right\}\right)\le
\frac{C(c_\sigma,c_\mu,\beta,\sigma)}{a} \varepsilon \mu(B_i)\,.
\]
Since $a=2\cdot c_\mu^5(1+4\cdot 2^{4\beta})$, this 
shows that \eqref{e.twanted} holds, and the proof is complete. 
\end{proof}

\section{$q$-independence of capacities}\label{s.indep}

In the previous section, we obtained an upper bound for the 
$L^p(X,l^1)$-norm of $\widecheck{\mathcal{H}}_\beta\nu$
in terms of the $L^p$-norm of $M_\beta\nu$. 
On the other hand, due to the doubling property of $\mu$,
the $L^p(X,l^\infty)$-norm of $\widecheck{\mathcal{H}}_\beta\nu$
gives an easy upper bound for the $L^p$-norm of $M_\beta\nu$;
cf.\ Lemma~\ref{l.m_equiv} and the proof of Lemma~\ref{l.q-independence}. 
Together, these estimates yield the 
main result of this section, Theorem~\ref{t.cnr_ML}, 
which gives the comparability of
the $(\beta,p,q)$-capacities $\Cp^{\beta}_{p,q}(E)$ for all values
of the parameter $1\le q\le \infty$. 
Observe that the Muckenhoupt--Wheeden Theorem~\ref{t.M-W} and the dual
characterization of $\Cp^{\beta}_{p,q}(E)$ in terms of 
$\widecheck{\mathcal{H}}_\beta\nu$ are both crucial 
for this approach, which in Euclidean spaces originally appeared in~\cite{MR1097178}; 
see also~\cite[Sections~3.6 and~4.4]{MR1411441}
and~\cite{MR1066471}.

We begin with a simple two-sided estimate for the fractional maximal function. 

\begin{lemma}\label{l.m_equiv}
Let $\beta>0$ and 
let $\nu$ be a nonnegative Borel measure in $X$. Then
\[
\sup_{n\ge n_0} 2^{-\beta n}\frac{\nu(B(x,2^{-n}))}{\mu(B(x,2^{-n}))} \le M_\beta \nu(x)\le c_\mu\sup_{n\ge n_0} 2^{-\beta n}\frac{\nu(B(x,2^{-n}))}{\mu(B(x,2^{-n}))}
\]
for every $x\in X$, where the supremums are taken over all integers $n$ such that $n\ge n_0$.
\end{lemma}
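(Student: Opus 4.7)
The plan is to reduce the continuous supremum in the definition of $M_\beta\nu$ to the discrete supremum over dyadic radii $2^{-n}$, paying only the factor $c_\mu$ coming from the doubling condition.

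For the first (left-hand) inequality, I would simply observe that each radius $r=2^{-n}$ with $n\ge n_0$ satisfies the constraint $0<r\le 2^{-n_0}$ appearing in Definition~\ref{d.frac_max} (this is the meaning of $n_0$, both in the bounded and unbounded cases). Therefore the dyadic expression $2^{-\beta n}\frac{\nu(B(x,2^{-n}))}{\mu(B(x,2^{-n}))}$ is one of the quantities over which $M_\beta\nu(x)$ is taken, so the supremum over $n\ge n_0$ of these dyadic expressions is dominated by $M_\beta\nu(x)$.

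For the second (right-hand) inequality, I would fix an arbitrary admissible $r$ with $0<r\le 2^{-n_0}$ and choose the unique integer $n$ such that $2^{-n-1}<r\le 2^{-n}$; by the choice of $n_0$ this integer satisfies $n\ge n_0$. Then $B(x,r)\subset B(x,2^{-n})$ gives $\nu(B(x,r))\le \nu(B(x,2^{-n}))$, while the doubling property \eqref{e.doubling} applied to $B(x,2^{-n-1})$ yields
\[
\mu(B(x,2^{-n}))\le c_\mu\,\mu(B(x,2^{-n-1}))\le c_\mu\,\mu(B(x,r)).
\]
Combining these with $r^\beta\le 2^{-\beta n}$, I obtain
\[
r^\beta\frac{\nu(B(x,r))}{\mu(B(x,r))}\le c_\mu\cdot 2^{-\beta n}\frac{\nu(B(x,2^{-n}))}{\mu(B(x,2^{-n}))}\le c_\mu\sup_{m\ge n_0} 2^{-\beta m}\frac{\nu(B(x,2^{-m}))}{\mu(B(x,2^{-m}))}.
\]
Taking the supremum over all admissible $r$ finishes the second inequality.

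There is no real obstacle here; the only mild point is making sure that the integer $n$ selected from a given $r$ lies in the allowed range $n\ge n_0$, which is immediate from the definition of $n_0$ in both the bounded and unbounded settings. The proof is therefore short and self-contained, relying only on monotonicity of $\nu$ and a single application of the doubling condition.
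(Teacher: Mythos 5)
Your proof is correct and follows essentially the same argument as the paper's: the left inequality is immediate from the definition of $M_\beta\nu$ since every dyadic radius $2^{-n}$ with $n\ge n_0$ is admissible, and the right inequality chooses the integer $n$ with $2^{-n-1}<r\le 2^{-n}$ and uses monotonicity together with one application of the doubling condition $\mu(B(x,2^{-n}))\le c_\mu\,\mu(B(x,2^{-n-1}))\le c_\mu\,\mu(B(x,r))$.
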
 

\begin{proof}
The first inequality follows from definition \eqref{e.max_funct_def}. For the converse inequality, we let $x\in X$ and $0<r\le 2^{-n_0}$, and let $k\ge n_0$ be an integer such that $2^{-k-1}< r \le 2^{-k}$. Then 
the doubling property \eqref{e.doubling} of $\mu$ implies that
\[
r^\beta \frac{\nu(B(x,r))}{\mu(B(x,r))}\le 2^{-\beta k}\frac{\nu(B(x,2^{-k}))}{\mu(B(x,2^{-k-1}))}\le c_\mu 2^{-\beta k}\frac{\nu(B(x,2^{-k}))}{\mu(B(x,2^{-k}))}\le c_\mu\sup_{n\ge n_0} 2^{-\beta n}\frac{\nu(B(x,2^{-n}))}{\mu(B(x,2^{-n}))}.
\]
The claim follows by taking supremum over all $0<r\le 2^{-n_0}$.
\end{proof}

From Lemma~\ref{l.m_equiv} and the 
Muckenhoupt--Wheeden Theorem~\ref{t.M-W},
we obtain a $q$-independence result for the
operator $\widecheck{\mathcal{H}}_\beta$.

\begin{lemma}\label{l.q-independence}
Let  $1<p<\infty$ and $\beta>0$.
Assume that $\mu$ 
satisfies the quantitative reverse doubling condition~\eqref{e.reverse_doubling} for some exponent $\sigma>\beta$,
and let $\nu$ be a nonnegative Borel measure in $X$.
Then there exists a constant $C=C(c_\sigma,c_\mu,\beta,\sigma,p)>0$ such that
\[
\big\lVert \widecheck{\mathcal{H}}_\beta \nu\big\rVert_{L^p(X;l^1)}
\le C\big\lVert \widecheck{\mathcal{H}}_\beta \nu\big\lVert_{L^p(X;l^{\infty})}\,.
\]
\end{lemma}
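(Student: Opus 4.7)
The plan is to chain together the Muckenhoupt--Wheeden estimate from Theorem~\ref{t.M-W} with the pointwise bound from Lemma~\ref{l.m_equiv}. The Muckenhoupt--Wheeden theorem gives
\[
\big\lVert \widecheck{\mathcal{H}}_\beta \nu\big\rVert_{L^p(X;l^1)} \le C(c_\sigma,c_\mu,\beta,\sigma,p)\,\lVert M_\beta\nu\rVert_{L^p(X)}\,,
\]
which precisely requires the hypothesis $\sigma>\beta$ on the reverse doubling exponent that is assumed in the present lemma. So the first step is simply to invoke Theorem~\ref{t.M-W}.

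The second step is to dominate $\lVert M_\beta\nu\rVert_{L^p(X)}$ by $\big\lVert \widecheck{\mathcal{H}}_\beta \nu\big\rVert_{L^p(X;l^\infty)}$. By Lemma~\ref{l.m_equiv}, for every $x\in X$,
\[
M_\beta\nu(x) \le c_\mu \sup_{n\ge n_0} 2^{-\beta n}\frac{\nu(B(x,2^{-n}))}{\mu(B(x,2^{-n}))}
= c_\mu \big\lVert \widecheck{\mathcal{H}}_\beta \nu(x)\big\rVert_{l^\infty}\,.
\]
Taking $L^p(X)$ norms of both sides yields
\[
\lVert M_\beta\nu\rVert_{L^p(X)} \le c_\mu \big\lVert \widecheck{\mathcal{H}}_\beta \nu\big\rVert_{L^p(X;l^\infty)}\,.
\]
Combining the two displays with $C=c_\mu\cdot C(c_\sigma,c_\mu,\beta,\sigma,p)$ completes the proof.

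Since both ingredients have already been proved and the argument is just a two-line concatenation, there is no real obstacle here; the work has been done in Theorem~\ref{t.M-W} and Lemma~\ref{l.m_equiv}. The only thing one has to double-check is that the hypotheses of Theorem~\ref{t.M-W} are already included in the hypotheses of Lemma~\ref{l.q-independence}, which they are.
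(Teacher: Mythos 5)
Your proposal is correct and is essentially identical to the paper's own proof: both apply Theorem~\ref{t.M-W} to bound $\lVert \widecheck{\mathcal{H}}_\beta \nu\rVert_{L^p(X;l^1)}$ by $\lVert M_\beta\nu\rVert_{L^p(X)}$, then use the pointwise upper bound from Lemma~\ref{l.m_equiv} to pass to $\lVert \widecheck{\mathcal{H}}_\beta\nu\rVert_{L^p(X;l^\infty)}$. No issues.
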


\begin{proof}
The Muckenhoupt--Wheeden Theorem \ref{t.M-W} gives
\[
\big\lVert \widecheck{\mathcal{H}}_\beta \nu\big\rVert_{L^p(X;l^1)}\le  C(c_\sigma,c_\mu,\beta,\sigma,p)\lVert M_\beta \nu\rVert_{L^p(X)}\,.
\]
On the other hand, Lemma~\ref{l.m_equiv} implies
\[
\lVert M_\beta \nu\rVert_{L^p(X)}\le c_\mu\big\lVert \widecheck{\mathcal{H}}_\beta \nu\big\rVert_{L^p(X;l^\infty)}\,,
\]
and the claim follows by
combining these two estimates.
\end{proof}

Under the assumptions of the following theorem, the capacities $\Cp^\beta_{p,q}(E)$ and $\Cp^\beta_{p,\tau}(E)$ are
comparable for all $1\le q,\tau\le \infty$;
cf.~\cite[p.~107]{MR1411441}. 
We emphasize that the constant of comparison is independent of $q$ and $\tau$. 

\begin{theorem}\label{t.cnr_ML}
Let  $\beta>0$, $1<p<\infty$, and $1\le q,\tau\le \infty$, and assume that $\mu$ 
satisfies the quantitative reverse doubling condition~\eqref{e.reverse_doubling} for some exponent $\sigma>\beta$.
Then there exists a constant $C=C(c_\sigma,c_\mu,\beta,\sigma,p)>0$ such that
$\Cp^\beta_{p,q}(E)\le C\Cp^\beta_{p,\tau}(E)$
for all compact sets $E\subset X$.
\end{theorem}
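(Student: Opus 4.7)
The plan is to reduce to the extreme case $q=1$ and $\tau=\infty$, which is the only non-trivial comparison, and then use the dual characterization of $\Cp^\beta_{p,q}$ together with the $l^1$--$l^\infty$ comparison of $\widecheck{\mathcal{H}}_\beta \nu$ from Lemma~\ref{l.q-independence}.

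First, by~\eqref{e.Cap_E_comp}, the capacity $\Cp^\beta_{p,q}(E)$ is a non-increasing function of $q$, so for any $1\le q,\tau\le \infty$ we have the chain
\[
\Cp^\beta_{p,q}(E) \le \Cp^\beta_{p,1}(E) \qquad\text{and}\qquad \Cp^\beta_{p,\infty}(E) \le \Cp^\beta_{p,\tau}(E).
\]
Hence, it suffices to prove that $\Cp^\beta_{p,1}(E) \le C \Cp^\beta_{p,\infty}(E)$ for every compact $E\subset X$, with $C=C(c_\sigma,c_\mu,\beta,\sigma,p)$ independent of $E$.

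Next, I would invoke the dual characterization in Theorem~\ref{t.n_cap_dual} for both endpoint values of $q$. Writing out $q'$ when $q=1$ and when $q=\infty$, I obtain
\[
\Cp^\beta_{p,1}(E)^{1/p} = \sup_{\nu\in\mathcal{M}^+(E)}\frac{\nu(E)}{\bigl\lVert \widecheck{\mathcal{H}}_\beta \nu\bigr\rVert_{L^{p'}(X;l^{\infty})}}\,,\qquad
\Cp^\beta_{p,\infty}(E)^{1/p} = \sup_{\nu\in\mathcal{M}^+(E)}\frac{\nu(E)}{\bigl\lVert \widecheck{\mathcal{H}}_\beta \nu\bigr\rVert_{L^{p'}(X;l^{1})}}\,.
\]
The remaining task is therefore to control the $l^{\infty}$-denominator from below by the $l^{1}$-denominator, up to a constant. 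This is precisely what Lemma~\ref{l.q-independence} provides, applied with the exponent $p$ replaced by $p'\in (1,\infty)$; the reverse doubling hypothesis $\sigma>\beta$ is the only structural assumption needed, and it is available. This gives
\[
\bigl\lVert \widecheck{\mathcal{H}}_\beta \nu\bigr\rVert_{L^{p'}(X;l^{1})} \le C(c_\sigma,c_\mu,\beta,\sigma,p)\,\bigl\lVert \widecheck{\mathcal{H}}_\beta \nu\bigr\rVert_{L^{p'}(X;l^{\infty})}\,.
\]

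Combining these two pieces, for every $\nu\in\mathcal{M}^+(E)$,
\[
\frac{\nu(E)}{\bigl\lVert \widecheck{\mathcal{H}}_\beta \nu\bigr\rVert_{L^{p'}(X;l^{\infty})}} \le C\,\frac{\nu(E)}{\bigl\lVert \widecheck{\mathcal{H}}_\beta \nu\bigr\rVert_{L^{p'}(X;l^{1})}}\,,
\]
and taking the supremum over $\nu\in\mathcal{M}^+(E)$ yields $\Cp^\beta_{p,1}(E)^{1/p} \le C\, \Cp^\beta_{p,\infty}(E)^{1/p}$. Raising to the $p$-th power and feeding the result into the chain of the first paragraph completes the proof. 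No real obstacle is expected: the minimax-based dual characterization and the Muckenhoupt--Wheeden type estimate (which is what powers Lemma~\ref{l.q-independence}) do all the heavy lifting, and the argument at this stage is purely formal.
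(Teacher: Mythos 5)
Your proposal is correct and follows essentially the same approach as the paper: the paper's proof also invokes the dual characterization (Theorem~\ref{t.n_cap_dual}) together with the $l^1$--$l^\infty$ comparison of $\widecheck{\mathcal{H}}_\beta \nu$ in $L^{p'}$ from Lemma~\ref{l.q-independence}, though it handles general $q,\tau$ directly via the chain $\lVert\cdot\rVert_{L^{p'}(X;l^{\tau'})}\le\lVert\cdot\rVert_{L^{p'}(X;l^{1})}\le C\lVert\cdot\rVert_{L^{p'}(X;l^{\infty})}\le C\lVert\cdot\rVert_{L^{p'}(X;l^{q'})}$ rather than first reducing to the extremal case via \eqref{e.Cap_E_comp}. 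This is a purely presentational difference; the key ingredients, the constants, and the hypotheses line up exactly.
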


\begin{proof}
The dual characterization of the capacity in
Theorem~\ref{t.n_cap_dual} implies that
\[
\Cp^\beta_{p,q}(E)^{1/p}=\sup
\Biggl\{\frac{\nu(E)}{\big\lVert \widecheck{\mathcal{H}}_\beta \nu\big\rVert_{L^{p'}(X;l^{q'})}}\,:\, \nu\in\mathcal{M}^+(E) \Biggr\}
\]
and 
\[
\Cp^\beta_{p,\tau}(E)^{1/p}=\sup
\Biggl\{\frac{\nu(E)}{\big\lVert \widecheck{\mathcal{H}}_\beta \nu\big\rVert_{L^{p'}(X;l^{\tau'})}}\,:\, \nu\in\mathcal{M}^+(E) \Biggr\}\,.
\]
By Lemma~\ref{l.q-independence}, we have
\[
\big\lVert \widecheck{\mathcal{H}}_\beta \nu\big\rVert_{L^{p'}(X;l^{\tau'})}\le \big\lVert \widecheck{\mathcal{H}}_\beta \nu\big\rVert_{L^{p'}(X;l^{1})}\le C\big\lVert \widecheck{\mathcal{H}}_\beta \nu\big\rVert_{L^{p'}(X;l^{\infty})}\le C \big\lVert \widecheck{\mathcal{H}}_\beta \nu\big\rVert_{L^{p'}(X;l^{q'})}
\]
for every $\nu\in\mathcal{M}^+(E)$, 
where $C=C(c_\sigma,c_\mu,\beta,\sigma,p)$. The desired conclusion follows.
\end{proof}

\section{Comparison to Riesz capacity}\label{s.comparison}

In this section we clarify the relations between Riesz capacities and Triebel--Lizorkin capacities.
Riesz capacities are defined in terms of Riesz potentials,
which appear frequently in potential analysis,
see for instance \cite{MR1411441,MR0350027}. 
In metric spaces there are several slightly different
ways to define Riesz potentials. 
The most natural for our purposes is the following variant, 
which appears for instance in \cite{MR1683160,MR1800917,MR3825765}.

\begin{definition}\label{d.RieszPot}
Let $\beta>0$ and let $\nu$ be a nonnegative Borel measure in $X$. The 
Riesz potential $I_\beta \nu$ is defined by
\[
I_\beta \nu(x)=\int_{X\setminus \{x\}}\frac{d(x,y)^\beta}{\mu(B(x,d(x,y)))}\,d\nu(y)\,,\qquad x\in X\,.
\]
If $f\ge 0$ is a Borel function in $X$, we write
$I_\beta f=I_\beta \nu$, where $d\nu=f\,d\mu$.
\end{definition}

Assuming that $f\ge 0$ is a Borel function in $X$ and
$\mu$ satisfies the quantitative reverse doubling condition \eqref{e.reverse_doubling} for some $\sigma>0$,
we have
\begin{equation}\label{e.xaway}
I_\beta f(x)=\int_{X\setminus \{x\}}\frac{d(x,y)^\beta}{\mu(B(x,d(x,y)))}f(y)\,d\mu(y)
=\int_{X}\frac{d(x,y)^\beta}{\mu(B(x,d(x,y)))}f(y)\,d\mu(y)
\end{equation}
for all $x\in X$ since $\mu(\{x\})=0$ for every $x\in X$.

Riesz capacities are well known in Euclidean spaces; 
we refer to \cite{MR1411441,MR946438} and the references therein. In metric spaces, variants of Riesz
capacities have been studied for instance in \cite{CILV,Nuutinen2015TheRC}.
If $\mu$ satisfies the quantitative reverse doubling condition \eqref{e.reverse_doubling} for some $\sigma>0$,
then the following definition coincides with that of \cite[Section 2.3]{CILV}, by \eqref{e.xaway}.

\begin{definition}\label{d.RieszCap}
Let  $\beta>0$ and $1< p<\infty$. The Riesz  
$(\beta,p)$-capacity of a subset $E\subset X$ is 
\[
R_{\beta,p}(E)=\inf \bigl\{ \lVert f\rVert_{L^p(X)}^p\,:\, f\ge 0 \text{ and }I_\beta f\ge 1\text{ on }E \bigr\}\,,
\]
where $f\ge 0$ in the infimum means that $f$ is a nonnegative Borel function in $X$.
\end{definition}

The pointwise inequalities for Riesz potentials in Lemma~\ref{l.riesz_equiv}
will be useful when comparing Riesz capacities and Triebel--Lizorkin capacities.
The proofs of these estimates are rather straightforward computations based on
the doubling and reverse doubling conditions.

\begin{lemma}\label{l.riesz_equiv}
Let $\beta>0$ and
let $\nu$ be a nonnegative Borel measure in $X$. Then
\begin{equation}\label{e.first}
I_\beta \nu(x)\le c_\mu \sum_{n=n_0}^{\infty} 2^{-\beta n}\frac{\nu(B(x,2^{-n})\setminus\{x\})}{\mu(B(x,2^{-n}))}
\end{equation}
for every $x\in X$.

Conversely, if $\mu$ 
satisfies the quantitative reverse doubling condition~\eqref{e.reverse_doubling} 
for some exponent $\sigma>\beta$, then there exists a constant $C=C(c_\sigma,\sigma,\beta)>0$ such that
\begin{equation}\label{e.second}
\sum_{n=n_0}^{\infty}2^{-\beta n}\frac{\nu(B(x,2^{-n})\setminus \{x\})}{\mu(B(x,2^{-n}))} \le CI_\beta \nu(x)
\end{equation}
for every $x\in X$.
\end{lemma}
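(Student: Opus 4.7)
The plan is to prove both inequalities via dyadic decomposition of $X\setminus\{x\}$ into annuli
\[
A_n(x) = \{y\in X : 2^{-n-1}\le d(x,y) < 2^{-n}\},\qquad n\ge n_0.
\]
Since the definition of $n_0$ forces $\diam(X)<2^{-n_0}$ in the bounded case, the annuli $A_n(x)$ for $n\ge n_0$ are pairwise disjoint and cover $X\setminus\{x\}$, which matches the summation range in the statement. So no issues arise at the ``top'' scale, and the argument is then a matter of comparing the continuous weight $d(x,y)^\beta/\mu(B(x,d(x,y)))$ with its dyadic counterpart $2^{-n\beta}/\mu(B(x,2^{-n}))$ on each annulus.

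For inequality \eqref{e.first} I would write $I_\beta\nu(x)=\sum_{n\ge n_0}\int_{A_n(x)}\frac{d(x,y)^\beta}{\mu(B(x,d(x,y)))}\,d\nu(y)$. For $y\in A_n(x)$ one has $d(x,y)^\beta\le 2^{-n\beta}$, and the doubling condition combined with $d(x,y)\ge 2^{-n-1}$ yields $\mu(B(x,d(x,y)))\ge\mu(B(x,2^{-n-1}))\ge c_\mu^{-1}\mu(B(x,2^{-n}))$. Summing these termwise bounds and using $\nu(A_n(x))\le\nu(B(x,2^{-n})\setminus\{x\})$ gives exactly \eqref{e.first}.

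For \eqref{e.second}, I would expand $\nu(B(x,2^{-n})\setminus\{x\})=\sum_{k\ge n}\nu(A_k(x))$ and swap the order of summation:
\[
\sum_{n=n_0}^{\infty}\frac{2^{-n\beta}\nu(B(x,2^{-n})\setminus\{x\})}{\mu(B(x,2^{-n}))}
=\sum_{k=n_0}^{\infty}\nu(A_k(x))\sum_{n=n_0}^{k}\frac{2^{-n\beta}}{\mu(B(x,2^{-n}))}.
\]
The inner sum is controlled by the quantitative reverse doubling assumption \eqref{e.reverse_doubling}, which gives $\mu(B(x,2^{-n}))\ge c_\sigma^{-1}2^{(k-n)\sigma}\mu(B(x,2^{-k}))$ for $n_0\le n\le k$, turning the inner sum into the geometric sum $c_\sigma\mu(B(x,2^{-k}))^{-1}2^{-k\sigma}\sum_{n=n_0}^{k}2^{n(\sigma-\beta)}$. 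Since $\sigma>\beta$ this is bounded by $C(c_\sigma,\sigma,\beta)\,2^{-k\beta}/\mu(B(x,2^{-k}))$. Finally, the lower bound
\[
I_\beta\nu(x)\ge \sum_{k\ge n_0}\int_{A_k(x)}\frac{d(x,y)^\beta}{\mu(B(x,d(x,y)))}\,d\nu(y)\ge 2^{-\beta}\sum_{k\ge n_0}\frac{2^{-k\beta}\nu(A_k(x))}{\mu(B(x,2^{-k}))},
\]
obtained from $d(x,y)\ge 2^{-k-1}$ and $B(x,d(x,y))\subset B(x,2^{-k})$ on $A_k(x)$, yields \eqref{e.second}.

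No real obstacle is expected: everything reduces to careful dyadic book-keeping. The one place demanding a little attention is the convergence of the geometric series in the inner sum, which is exactly where the hypothesis $\sigma>\beta$ enters; without this gap the argument fails and the estimate \eqref{e.second} cannot hold in this form.
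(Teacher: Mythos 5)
Your proof is correct and follows essentially the same route as the paper: dyadic annular decomposition of $X\setminus\{x\}$, doubling for \eqref{e.first}, and for \eqref{e.second} a Fubini-type swap of the double sum followed by the quantitative reverse doubling condition to dominate the inner geometric sum (where $\sigma>\beta$ is used exactly as you note). The only cosmetic difference is that the paper applies reverse doubling with $r=d(x,y)$ and substitutes back into the integrand, while you apply it at the dyadic scale $r=2^{-k}$ and then compare with a dyadic lower bound for $I_\beta\nu$; the two finishes are equivalent.
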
 

\begin{proof}
Fix $x\in X$ and write $A_n=B(x,2^{-n})\setminus B(x,2^{-n-1})$ for every $n\ge n_0$. We have
\begin{equation}\label{e.disc_riesz}
I_\beta \nu(x)=\int_{X\setminus \{x\}}\frac{d(x,y)^\beta}{\mu(B(x,d(x,y)))}\,d\nu(y)=\sum_{n=n_0}^{\infty} \int_{A_n}\frac{d(x,y)^\beta}{\mu(B(x,d(x,y)))}\,d\nu(y)\,.
\end{equation}
The doubling property \eqref{e.doubling} of $\mu$ implies that
\[
\frac{d(x,y)^\beta}{\mu(B(x,d(x,y)))}\le \frac{2^{-\beta n}}{\mu(B(x,2^{-n-1}))}\le c_\mu \frac{2^{-\beta n}}{\mu(B(x,2^{-n}))}
\]
for every $y\in A_n$ and $n\ge n_0$, and thus
\begin{align*}
I_\beta \nu(x)
&\le c_\mu\sum_{n=n_0}^{\infty} \frac{2^{-\beta n}\nu(A_n)}{\mu(B(x,2^{-n}))}\le  c_\mu\sum_{n=n_0}^{\infty} 2^{-\beta n}\frac{\nu(B(x,2^{-n})\setminus \{x\})}{\mu(B(x,2^{-n}))}.
\end{align*}
This proves inequality \eqref{e.first}.

For the proof of \eqref{e.second}, we assume that $\mu$
satisfies the quantitative reverse doubling condition~\eqref{e.reverse_doubling} for some exponent $\sigma>\beta$. 
We estimate
\begin{equation}\label{e.prep}
\begin{split}
&\sum_{n=n_0}^{\infty} 2^{-\beta n}\frac{\nu(B(x,2^{-n})\setminus \{x\})}{\mu(B(x,2^{-n}))}
=\sum_{n=n_0}^{\infty} \int_{B(x,2^{-n})\setminus \{x\}}\frac{2^{-\beta n}}{\mu(B(x,2^{-n}))}d\nu(y)\\
&\qquad = \sum_{n=n_0}^{\infty} \sum_{m=n}^\infty \int_{A_m}\frac{2^{-\beta n}}{\mu(B(x,2^{-n}))}d\nu(y)= \sum_{m=n_0}^{\infty}  \int_{A_m} \sum_{n=n_0}^{m}\frac{2^{-\beta n}}{\mu(B(x,2^{-n}))}d\nu(y).
\end{split}
\end{equation}
Let $m,n\in \Z$ be such that $n_0\le n\le m$ and let $y\in A_m$. Then the quantitative reverse doubling condition \eqref{e.reverse_doubling} implies that
\[
 \frac{\mu(B(x,d(x,y)))}{\mu(B(x,2^{-n}))} \le c_\sigma\biggl(\frac{d(x,y)}{2^{-n}}\biggr)^\sigma.
\]
Hence,
\[
\frac{2^{-\beta n}}{\mu(B(x,2^{-n}))}=2^{n(\sigma-\beta)}\frac{2^{-\sigma n}}{\mu(B(x,2^{-n}))}
\le c_\sigma 2^{n(\sigma-\beta)} \frac{d(x,y)^\sigma}{\mu(B(x,d(x,y)))},
\]
and therefore
\begin{equation}\label{e.prep2}
\begin{split}
\sum_{n=n_0}^{m}\frac{2^{-\beta n}}{\mu(B(x,2^{-n}))}
&\le c_\sigma \frac{d(x,y)^\sigma}{\mu(B(x,d(x,y)))}\sum_{n=-\infty}^m  2^{n(\sigma-\beta)} \\
&\le C(c_\sigma,\sigma,\beta)\frac{d(x,y)^\sigma}{\mu(B(x,d(x,y)))}2^{m(\sigma-\beta)}\\
&\le C(c_\sigma,\sigma,\beta)\frac{d(x,y)^\beta}{\mu(B(x,d(x,y)))}
\end{split}
\end{equation}
for every $m\ge n_0$ and $y\in A_m$.
Combining the estimates~\eqref{e.prep} and~\eqref{e.prep2}
and the identity in~\eqref{e.disc_riesz}, we conclude that
\begin{align*}
\sum_{n=n_0}^{\infty} 2^{-\beta n}\frac{\nu(B(x,2^{-n})\setminus \{x\})}{\mu(B(x,2^{-n}))}
&\le C(c_\sigma,\sigma,\beta)\sum_{m=n_0}^{\infty}  \int_{A_m} \frac{d(x,y)^\beta}{\mu(B(x,d(x,y)))}d\nu(y)\\
&=C(c_\sigma,\sigma,\beta)I_\beta \nu(x)\,,
\end{align*}
as was claimed.
\end{proof}

The following theorem explains the relation between the
Riesz $(\beta,p)$-capacities and the Triebel--Lizorkin $(\beta,p,q)$-capacities. 
Namely, in the case $q=\infty$ these capacities are comparable
for all sets $E\subset X$, under our standard assumptions.
The proof is quite simple, relying on the pointwise
estimates given by Lemma~\ref{l.riesz_equiv}.

\begin{theorem}\label{t.infinity}
Let $\beta>0$ and $1<p<\infty$, and assume that $\mu$ 
satisfies the quantitative reverse doubling condition~\eqref{e.reverse_doubling} 
for some exponent $\sigma>\beta$. Then
\[
c_\mu^{-2p} \Cp^\beta_{p,\infty}(E) \le R_{\beta,p}(E)\le C(c_\mu,c_\sigma,\sigma,\beta,p)\Cp^\beta_{p,\infty}(E)\,,
\]
for all sets $E\subset X$.
\end{theorem}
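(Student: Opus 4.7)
The plan is to deduce both inequalities directly from the pointwise comparisons in Lemma~\ref{l.riesz_equiv}, using a simple passage between a single nonnegative function $f$ (used in $R_{\beta,p}$) and a sequence $(f_n)_{n\ge n_0}$ (used in $\Cp^\beta_{p,q}$). The main translation device is the identity $\mathbf{1}_{B(y,2^{-n})}(x)=\mathbf{1}_{B(x,2^{-n})}(y)$, which converts each term of $\Hop_\beta$ into an integral over $B(x,2^{-n})$. Combined with the doubling property, which gives the comparison $\mu(B(y,2^{-n}))\asymp_{c_\mu} \mu(B(x,2^{-n}))$ whenever $y\in B(x,2^{-n})$, this allows us to pass between $\Hop_\beta$-terms and averages $\vint_{B(x,2^{-n})}\cdot\,d\mu$ losing only a factor $c_\mu^{\pm 1}$. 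Note also that the quantitative reverse doubling~\eqref{e.reverse_doubling} forces $\mu(\{x\})=0$, so integrating over $B(x,2^{-n})$ or $B(x,2^{-n})\setminus\{x\}$ gives the same value.

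For the lower bound $c_\mu^{-2p}\Cp^\beta_{p,\infty}(E)\le R_{\beta,p}(E)$, I would take an admissible $f\ge 0$ for $R_{\beta,p}(E)$ and form the \emph{constant sequence} $f_n:=f$ for all $n\ge n_0$, so that $\sup_n f_n=f$ and $\lVert (f_n)\rVert_{L^p(X;l^\infty)}=\lVert f\rVert_{L^p(X)}$. Applying the doubling comparison termwise yields
\[
\Hop_\beta(f_n)(x)
=\sum_{n=n_0}^\infty 2^{-\beta n}\int_{B(x,2^{-n})}\frac{f(y)}{\mu(B(y,2^{-n}))}\,d\mu(y)
\ge c_\mu^{-1}\sum_{n=n_0}^\infty 2^{-\beta n}\frac{\nu(B(x,2^{-n})\setminus\{x\})}{\mu(B(x,2^{-n}))},
\]
where $d\nu=f\,d\mu$. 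By inequality~\eqref{e.first} of Lemma~\ref{l.riesz_equiv}, this is at least $c_\mu^{-2}I_\beta f(x)\ge c_\mu^{-2}$ on $E$. Thus $(c_\mu^2 f_n)_{n\ge n_0}$ is admissible for $\Cp^\beta_{p,\infty}(E)$, giving $\Cp^\beta_{p,\infty}(E)\le c_\mu^{2p}\lVert f\rVert_{L^p(X)}^p$; taking infimum over $f$ yields the claim.

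For the upper bound $R_{\beta,p}(E)\le C\Cp^\beta_{p,\infty}(E)$, I would take any sequence $(f_n)_{n\ge n_0}\ge 0$ admissible for $\Cp^\beta_{p,\infty}(E)$ and set $g:=\sup_{n\ge n_0} f_n$, so that $\lVert g\rVert_{L^p(X)}=\lVert (f_n)\rVert_{L^p(X;l^\infty)}$. Using $f_n\le g$ termwise together with the doubling comparison yields
\[
\Hop_\beta(f_n)(x)\le c_\mu\sum_{n=n_0}^\infty 2^{-\beta n}\frac{\nu_g(B(x,2^{-n}))}{\mu(B(x,2^{-n}))},
\]
where $d\nu_g=g\,d\mu$. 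Here the hypothesis $\sigma>\beta$ lets us apply inequality~\eqref{e.second} of Lemma~\ref{l.riesz_equiv}, bounding the right-hand side by $c_\mu C(c_\sigma,\sigma,\beta)\,I_\beta g(x)$. Since $\Hop_\beta(f_n)\ge 1$ on $E$, a constant multiple of $g$ is admissible for $R_{\beta,p}(E)$, and taking infimum over $(f_n)$ gives the desired estimate with $C=C(c_\mu,c_\sigma,\sigma,\beta,p)$.

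There is no substantial obstacle here: once the constant-sequence observation is made in one direction, and the dominating-envelope observation is made in the other, everything reduces to Lemma~\ref{l.riesz_equiv} combined with the doubling comparison of measures of concentric balls. The only mild care required is to ensure the comparison is applied uniformly across all $n\ge n_0$ (handled by the global doubling constant $c_\mu$) and to note that $\mu(\{x\})=0$ so that removing the singleton causes no change.
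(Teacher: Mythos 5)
Your proposal is correct and follows essentially the same route as the paper's proof: in one direction use the constant sequence $f_n=f$ together with the doubling comparison and inequality~\eqref{e.first}, and in the other take the envelope $g=\sup_n f_n$ and apply the doubling comparison and inequality~\eqref{e.second}. The only cosmetic difference is that you write the chain of estimates starting from $\Hop_\beta$ whereas the paper starts from $1\le I_\beta f(x)$, but the content is identical.
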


\begin{proof}
To show the first inequality, we fix a nonnegative Borel function $f$ such that $I_\beta f(x)\ge 1$
for every $x\in E$. We set $h_n=f$ for every $n\ge n_0$ and define $h=(h_n)_{n=n_0}^\infty\ge 0$.
Assuming that $x\in E$, Lemma~\ref{l.riesz_equiv} with the measure $d\nu=f\,d\mu$ gives 
\begin{align*}
1&\le I_\beta f(x) =I_\beta\nu(x)\le c_\mu \sum_{n=n_0}^{\infty} 2^{-\beta n}\frac{\nu(B(x,2^{-n})\setminus\{x\})}{\mu(B(x,2^{-n}))}\\
&\le c_\mu \sum_{n=n_0}^{\infty} 2^{-\beta n}\int_X \frac{\mathbf{1}_{B(x,2^{-n})}(y)}{\mu(B(x,2^{-n}))} f(y)\,d\mu(y)\\
&\le c_\mu^2 \sum_{n=n_0}^{\infty} 2^{-\beta n}\int_X \frac{\mathbf{1}_{B(y,2^{-n})}(x)}{\mu(B(y,2^{-n}))} h_n(y)\,d\mu(y)=\Hop_\beta (c_\mu^2h)(x)\,.
\end{align*}
Thus
\[
\Cp^\beta_{p,\infty}(E)\le \lVert c_\mu^2h\rVert_{L^p(X;l^\infty)}^p=c_\mu^{2p}\lVert h\rVert_{L^p(X;l^\infty)}^p=c_\mu^{2p}\lVert f\rVert_{L^p(X)}^p\,,
\]
and the first inequality of the claim follows by taking infimum over all $f$ as above.

Let then $f=(f_n)_{n=n_0}^\infty$ be a sequence of nonnegative Borel functions in $X$
such that $\Hop_\beta f(x)\ge 1$ for all $x\in E$. We define
$h=\sup_{n\ge n_0} f_n\ge 0$ and fix $x\in E$. 
Using Lemma~\ref{l.riesz_equiv}, with the measure $d\nu=h\,d\mu$, 
and the fact that $\mu(\{x\})=0$ for every $x\in X$,
we obtain
\begin{align*}
1&\le \Hop_\beta f(x)=\sum_{n=n_0}^{\infty} 2^{-\beta n}\int_X \frac{\mathbf{1}_{B(y,2^{-n})}(x)}{\mu(B(y,2^{-n}))} f_n(y)\,d\mu(y)\\
&\le c_\mu \sum_{n=n_0}^{\infty} 2^{-\beta n}\int_{X\setminus \{x\}} \frac{\mathbf{1}_{B(x,2^{-n})}(y)}{\mu(B(x,2^{-n}))} h(y)\,d\mu(y)\\
&=c_\mu \sum_{n=n_0}^{\infty}2^{-\beta n}\frac{\nu(B(x,2^{-n})\setminus \{x\})}{\mu(B(x,2^{-n}))} \le CI_\beta \nu(x)=I_\beta (Ch)(x)\,,
\end{align*}
where $C=C(c_\mu,c_\sigma,\sigma,\beta)$. This implies that
\[
R_{\beta,p}(E)\le \lVert Ch\rVert_{L^p(X)}^p=C^p\lVert h\rVert_{L^p(X)}=C^p\lVert f\rVert_{L^p(X;l^\infty)}^p\,,
\]
and the second inequality of the claim follows by taking infimum over all $f$ as above.
\end{proof}

By the $q$-independence of the $(\beta,p,q)$-capacities for compact sets, given by
Theorem~\ref{t.cnr_ML}, we obtain from Theorem~\ref{t.infinity} 
the comparability of $\Cp^\beta_{p,q}$ and $R_{\beta,p}$ for all $1\le q\le \infty$.
Thus the following result is an immediate consequence of those two theorems.
Nevertheless, we give below also an alternative direct proof,
which is based on the dual characterization of $\Cp^\beta_{p,q}$ and
uses some arguments from \cite{Nuutinen2015TheRC}.

\begin{theorem}\label{t.cnr}
Let  $\beta>0$, $1<p<\infty$, and $1\le q\le \infty$, and assume that $\mu$ 
satisfies the quantitative reverse doubling condition~\eqref{e.reverse_doubling} for some exponent $\sigma>\beta$.
Then there exists a constant $C=C(c_\sigma,c_\mu,\beta,\sigma,p)>0$ such that
$\Cp^\beta_{p,q}(E)\le CR_{\beta,p}(E)$
for all compact sets $E\subset X$.
\end{theorem}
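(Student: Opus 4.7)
The plan is to combine the dual characterization of $\Cp^\beta_{p,q}(E)$ in Theorem~\ref{t.n_cap_dual} with a standard weak duality for $R_{\beta,p}$ (in the spirit of~\cite{Nuutinen2015TheRC}). By that theorem it suffices to produce, for every $\nu\in\mathcal{M}^+(E)$, an estimate of the form
\[
\nu(E)\le C\, R_{\beta,p}(E)^{1/p}\,\bigl\lVert \widecheck{\mathcal{H}}_\beta\nu\bigr\rVert_{L^{p'}(X;l^{q'})}\,,
\]
and then take the supremum over $\nu$. To obtain the weak duality, I would fix a nonnegative Borel function $f$ with $I_\beta f\ge 1$ on $E$, integrate the inequality against $\nu$, apply Fubini, and then swap the roles of $x$ and $y$ in the Riesz kernel. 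The roles are not quite symmetric because the kernel contains $\mu(B(x,d(x,y)))$ and not $\mu(B(y,d(x,y)))$, but doubling gives $\mu(B(y,d(x,y)))\le c_\mu \mu(B(x,d(x,y)))$, and H\"older's inequality then yields
\[
\nu(E)\le \int_E I_\beta f\,d\nu\le c_\mu \int_X f\,I_\beta\nu\,d\mu\le c_\mu\,\lVert f\rVert_{L^p(X)}\lVert I_\beta\nu\rVert_{L^{p'}(X)}\,.
\]
Taking the infimum over admissible $f$ produces $\nu(E)\le c_\mu R_{\beta,p}(E)^{1/p}\lVert I_\beta\nu\rVert_{L^{p'}(X)}$.

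The next step is to bound $\lVert I_\beta\nu\rVert_{L^{p'}}$ by $\lVert \widecheck{\mathcal{H}}_\beta\nu\rVert_{L^{p'}(X;l^{q'})}$. Here the pointwise estimate~\eqref{e.first} from Lemma~\ref{l.riesz_equiv} gives
\[
I_\beta\nu(x)\le c_\mu \sum_{n\ge n_0} 2^{-\beta n}\frac{\nu(B(x,2^{-n}))}{\mu(B(x,2^{-n}))} = c_\mu\bigl\lVert \widecheck{\mathcal{H}}_\beta\nu(x)\bigr\rVert_{l^1}\,,
\]
and hence $\lVert I_\beta\nu\rVert_{L^{p'}(X)}\le c_\mu\lVert \widecheck{\mathcal{H}}_\beta\nu\rVert_{L^{p'}(X;l^1)}$. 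To pass from the $l^1$-norm to the $l^{q'}$-norm I would invoke Lemma~\ref{l.q-independence} (whose hypotheses are exactly the assumed reverse doubling with $\sigma>\beta$), which gives
\[
\bigl\lVert \widecheck{\mathcal{H}}_\beta\nu\bigr\rVert_{L^{p'}(X;l^1)}\le C\bigl\lVert \widecheck{\mathcal{H}}_\beta\nu\bigr\rVert_{L^{p'}(X;l^\infty)}\le C\bigl\lVert \widecheck{\mathcal{H}}_\beta\nu\bigr\rVert_{L^{p'}(X;l^{q'})}\,,
\]
where the last inequality just uses $\lVert \cdot\rVert_{l^\infty}\le \lVert \cdot\rVert_{l^{q'}}$ for $q'\in[1,\infty]$.

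Combining the two displays, dividing by $\lVert \widecheck{\mathcal{H}}_\beta\nu\rVert_{L^{p'}(X;l^{q'})}$, and taking the supremum over $\nu\in\mathcal{M}^+(E)$ yields $\Cp^\beta_{p,q}(E)^{1/p}\le CR_{\beta,p}(E)^{1/p}$ by Theorem~\ref{t.n_cap_dual}, which is the desired conclusion after raising to the $p$th power. The only mildly delicate point is the Fubini/doubling swap in the first step, since it must be justified for a general Radon measure $\nu$; this is standard because the kernel is nonnegative and Borel measurable in $(x,y)$, so Tonelli's theorem applies without integrability worries. All other ingredients are direct appeals to results already established earlier in the paper, so I do not anticipate any serious obstacle.
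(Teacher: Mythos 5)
Your proposal matches the paper's own proof essentially step for step: it uses the dual characterization from Theorem~\ref{t.n_cap_dual}, the Tonelli swap with the doubling trick to symmetrize the Riesz kernel, H\"older's inequality, the pointwise bound \eqref{e.first} from Lemma~\ref{l.riesz_equiv}, and the $q$-independence Lemma~\ref{l.q-independence} in exactly the same order. The only cosmetic difference is taking the infimum over $f$ slightly earlier in the chain, which does not affect the argument.
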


\begin{proof}
Theorem~\ref{t.n_cap_dual} implies that
\begin{equation}\label{e.equiv}
\Cp^\beta_{p,q}(E)^{1/p}=\sup
\Biggl\{\frac{\nu(E)}{\big\lVert \widecheck{\mathcal{H}}_\beta \nu\big\rVert_{L^{p'}(X;l^{q'})}}\,:\, \nu\in\mathcal{M}^+(E) \Biggr\}\,.
\end{equation}
Let $\nu\in\mathcal{M}^+(E)$ and let $f\ge 0$ be such that $I_\beta f\ge 1$ on $E$. 
Then
\begin{align*}
\nu(E)&\le \int_X I_\beta f(x)\,d\nu(x)
=\int_X \int_{X} \mathbf{1}_{X\setminus\{x\}}(y) \frac{d(x,y)^\beta}{\mu(B(x,d(x,y)))}f(y)\,d\mu(y)\,d\nu(x)\\
&\le c_\mu\int_X \int_{X}\mathbf{1}_{X\setminus\{y\}}(x)\frac{d(y,x)^\beta}{\mu(B(y,d(y,x)))}\,d\nu(x)f(y)\,d\mu(y) \\
&=c_\mu \int_X  I_\beta \nu(y) f(y)\,  d\mu(y)\le c_\mu \lVert I_\beta \nu\rVert_{L^{p'}(X)}\lVert f \rVert_{L^p(X)}\,.
\end{align*}
Lemma~\ref{l.riesz_equiv} and Lemma~\ref{l.q-independence} imply that
\begin{align*}
\lVert I_\beta\nu\rVert_{L^{p'}(X)}&\le c_\mu\big\lVert \widecheck{\mathcal{H}}_\beta \nu\big\rVert_{L^{p'}(X;l^1)}
\le C(c_\sigma,c_\mu,\beta,\sigma,p)\big\lVert \widecheck{\mathcal{H}}_\beta \nu\big\rVert_{L^{p'}(X;l^{\infty})}
\\&\le C(c_\sigma,c_\mu,\beta,\sigma,p)\big\lVert \widecheck{\mathcal{H}}_\beta \nu\big\rVert_{L^{p'}(X;l^{q'})}\,.
\end{align*}
Hence, 
\[
\frac{\nu(E)}{\big\lVert \widecheck{\mathcal{H}}_\beta \nu\big\rVert_{L^{p'}(X;l^{q'})}}
\le C(c_\sigma,c_\mu,\beta,\sigma,p)\lVert f \rVert_{L^p(X)}\,,
\]
and by taking infimum over all $f$ as above, we get
\[
\frac{\nu(E)}{\big\lVert \widecheck{\mathcal{H}}_\beta \nu\big\rVert_{L^{p'}(X;l^{q'})}}
\le C(c_\sigma,c_\mu,\beta,\sigma,p) R_{\beta,p}(E)^{1/p}\,.
\]
The claim follows by taking supremum over all 
$\nu\in\mathcal{M}^+(E)$ and using \eqref{e.equiv}.
\end{proof} 

We also record explicitly the following corollary concerning the comparability
of the Riesz capacity and the 
relative Haj{\l}asz--Triebel--Lizorkin $(\beta,p,q)$-capacity.
This result is needed below in our applications concerning capacity density conditions.

\begin{corollary}\label{c.equiv_bp_SEC}
Let $1<p<\infty$, $1<q\le \infty$, $0<\beta <1$, and $\Lambda\ge  2$,
and assume that $\mu$ satisfies the quantitative reverse doubling condition~\eqref{e.reverse_doubling} for some exponent  $\sigma>\beta p$.
Let $B\subset X$ be a ball and let $E\subset \iol{B}$ be a compact set.
Then there exists a constant  $C=C(c_\sigma,p,q,\sigma,\beta,c_\mu,\Lambda)>0$ such that
\begin{equation*}%
\capm{\beta}{p}{q}(E, 2B,\Lambda B)\le   C R_{\beta,p}(E)\,.
\end{equation*}
\end{corollary}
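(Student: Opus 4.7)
The plan is to derive Corollary~\ref{c.equiv_bp_SEC} by chaining together two theorems already proved earlier in the paper, with no further work needed beyond verifying the compatibility of hypotheses.

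First, I would invoke Theorem~\ref{t.equiv_bp}, which under the standing assumptions (namely $1<p<\infty$, $1<q\le\infty$, $0<\beta<1$, $\Lambda\ge 2$, and the quantitative reverse doubling condition~\eqref{e.reverse_doubling} with exponent $\sigma>\beta p$) produces a constant $C=C(c_\sigma,p,q,\sigma,\beta,c_\mu,\Lambda)>0$ such that
\[
\capm{\beta}{p}{q}(E, 2B,\Lambda B)\le C\,\Cp^\beta_{p,q}(E)
\]
for every compact set $E\subset\iol{B}$. The hypotheses of the corollary are exactly those of Theorem~\ref{t.equiv_bp}, so this step applies directly.

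Next I would apply Theorem~\ref{t.cnr}, which provides a constant $C'=C'(c_\sigma,c_\mu,\beta,\sigma,p)>0$ such that $\Cp^\beta_{p,q}(E)\le C' R_{\beta,p}(E)$ for every compact set $E\subset X$. The only subtlety is that Theorem~\ref{t.cnr} requires~\eqref{e.reverse_doubling} with exponent $\sigma>\beta$, whereas the corollary assumes the stronger condition $\sigma>\beta p$. Since $p>1$, we have $\sigma>\beta p>\beta$, so Theorem~\ref{t.cnr} is available with the same $\sigma$.

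Combining the two inequalities yields
\[
\capm{\beta}{p}{q}(E, 2B,\Lambda B)\le C\,\Cp^\beta_{p,q}(E)\le CC'\,R_{\beta,p}(E),
\]
with the product $CC'$ depending only on the stated parameters $c_\sigma,p,q,\sigma,\beta,c_\mu,\Lambda$. There is no substantial obstacle here—the corollary is simply a bookkeeping consequence of the two main comparison results established earlier, recorded separately because it is the precise form that will be used in the applications to capacity density conditions in Section~\ref{s.density}.
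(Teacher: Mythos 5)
Your proposal is correct and matches the paper's own proof exactly: the paper derives Corollary~\ref{c.equiv_bp_SEC} by combining Theorems~\ref{t.equiv_bp} and~\ref{t.cnr} in precisely this way. Your observation that $\sigma>\beta p>\beta$ (since $p>1$) ensures the hypotheses of Theorem~\ref{t.cnr} are satisfied is a worthwhile check that the paper leaves implicit.
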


\begin{proof}
This follows by combining Theorems~\ref{t.equiv_bp}
and~\ref{t.cnr}.
\end{proof}

\section{Capacity density}\label{s.density}

We conclude the paper by applying the results from the previous sections
to obtain a characterization of the capacity density condition
for the relative Haj{\l}asz--Triebel--Lizorkin capacity.
This extends the results in \cite{MR946438}, \cite{CILV} and \cite{MR4478471}
to our present setting
and in particular implies that also this density condition
is open-ended (self-improving) in complete geodesic metric spaces; 
see Corollary~\ref{c.riesz-improvement}.

In the proof of the characterization in Theorem~\ref{t.Riesz_and_Hajlasz} we will
need most of the theory that has been established in the paper. In particular, we 
use Corollary~\ref{c.equiv_bp_SEC}, which follows from the comparability results
between the relative Haj{\l}asz--Triebel--Lizorkin $(\beta,p,q)$-capacity,
the Triebel--Lizorkin $(\beta,p,q)$-capacity, and the Riesz $(\beta,p)$-capacity.
In the proofs of the comparability results we applied the discrete $\Lop_\beta$-potentials 
and the $q$-independence of the Triebel--Lizorkin $(\beta,p,q)$-capacities. The proof of the
latter was, in turn, based on the dual characterization of the Triebel--Lizorkin capacities
and the Muckenhoupt--Wheeden Theorem. Finally, we utilize also the 
estimates for the relative $(\beta,p,q)$-capacities from Section~\ref{s.tl_space}.

Besides these results from the present work, one essential ingredient in the proof of 
Theorem~\ref{t.Riesz_and_Hajlasz} is~\cite[Theorem~6.4]{CILV}, which
however requires some additional assumptions on the space~$X$.
The proof of~\cite[Theorem~6.4]{CILV} utilizes a non-trivial characterization of
a Haj{\l}asz capacity density condition in terms of Hausdorff content density conditions
from~\cite[Theorem~9.5]{MR4478471}.

Before the statement of Theorem~\ref{t.Riesz_and_Hajlasz},
we introduce the relevant density conditions.

\begin{definition}\label{d.cap_density}
Let $1< p<\infty$, $1\le q\le\infty$, and $0<\beta<1$.
A closed set $E\subset X$ satisfies the Haj{\l}asz--Triebel--Lizorkin $(\beta,p,q)$-capacity density condition,   
if there are constants $c_0,c_1>0$ and $\Lambda>2$ such that
\begin{equation}\label{e.Hajlaszc capacity density condition}
\capm{\beta}{p}{q}(E\cap \overline{B(x,r)},B(x,2r),B(x,\Lambda r))\ge 
 c_0 \capm{\beta}{p}{q}(\overline{B(x,r)},B(x,2r),B(x,\Lambda r)) 
\end{equation}
for all $x\in E$ and all $0<r<c_1\diam(E)$. 
\end{definition}

By inequality \eqref{e.xaway}, the following Riesz capacity density condition
coincides with the one in \cite[Definition~6.3]{CILV} if $\mu$ satisfies the  
quantitative reverse doubling condition \eqref{e.reverse_doubling} for some $\sigma>0$.

\begin{definition}\label{d.riesz_cap_density}
Let $1\le p<\infty$ and $\beta >0$.
A closed set $E\subset X$ satisfies the Riesz $(\beta,p)$-capacity density condition,
if there is a constant $c_0>0$ such that
\begin{equation}\label{e.Riesz capacity density condition}
R_{\beta,p}(E\cap \overline{B(x,r)})\ge c_0 \: R_{\beta,p}(\overline{B(x,r)}) 
\end{equation}
for all $x\in E$ and all $0<r<(1/8)\diam(E)$.
\end{definition}

We also need a suitable version of the Hausdorff content density condition;
recall Definition~\ref{d.hcc}. %
The following definition coincides with \cite[Definition~5.1]{CILV}.

\begin{definition}
A closed set $E\subset X$ satisfies the Hausdorff content density condition of codimension $d\ge 0$, if  
there is a constant $c_0>0$ such that
\begin{equation}\label{e.hausdorff_content_density}
\Ha^{\mu,d}_r(E\cap \overline{B(x,r)})\ge c_0 \: \Ha^{\mu,d}_r(\overline{B(x,r)})
\end{equation}
for all $x\in E$ and all $0<r<\diam(E)$.
\end{definition}

We are now ready to characterize the Haj{\l}asz--Triebel--Lizorkin capacity density condition.
The implication from (ii) to (iii) in the following theorem relies (via \cite[Theorem 6.4]{CILV})
on~\cite[Theorem 9.5]{MR4478471}. 
This is the most difficult of the implications in the proof.
The bounds $\Lambda \ge 41$ and $c_1\le 1/80$ in (i) and (iv) are convenient due to the use of 
Theorem~\ref{t.HC_bp}, but they have not been optimized. 
Nevertheless, part (iv) shows that the actual value of these constants is not essential. 
Moreover, a straightforward modification of Example \ref{e.ball_example} shows that 
all closed sets $E\subset \R^n$ satisfy condition (i) of Theorem~\ref{t.Riesz_and_Hajlasz} with $\Lambda=2$
when $1<p<\infty$ and $0<\beta<1$ are such that $\beta p<1$.
Not all closed sets satisfy, say, condition (iii) and therefore
the requirement $\Lambda\ge 41$ cannot be replaced with $\Lambda\ge 2$ in part (i)  of Theorem \ref{t.Riesz_and_Hajlasz}.

\begin{theorem}\label{t.Riesz_and_Hajlasz}
Let $1<p<\infty$, $1<q\le \infty$, and $0<\beta< 1$.
Assume that $X$ is a complete geodesic metric space and that $\mu$ satisfies the quantitative reverse doubling condition \eqref{e.reverse_doubling} for some exponent $\sigma>\beta p$. Then the following
conditions are equivalent for all closed sets $E\subset X$:
\begin{enumerate}
\item[\textup{(i)}] $E$ satisfies the Haj{\l}asz--Triebel--Lizorkin $(\beta,p,q)$-capacity density condition \eqref{e.Hajlaszc capacity density condition}, 
with some constants $\Lambda \ge 41$ and $c_1\le 1/80$.
\item[\textup{(ii)}] $E$ satisfies the Riesz $(\beta,p)$-capacity density condition \eqref{e.Riesz capacity density condition}.
\item[\textup{(iii)}] $E$ satisfies the Hausdorff content density condition \eqref{e.hausdorff_content_density} for some $0 < d < \beta p$.
\item[\textup{(iv)}] $E$ satisfies the Haj{\l}asz--Triebel--Lizorkin $(\beta,p,q)$-capacity density condition \eqref{e.Hajlaszc capacity density condition}, 
for all constants $\Lambda \ge 41$ and $c_1\le 1/80$. 
\end{enumerate}
\end{theorem}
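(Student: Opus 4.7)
Plan of proof. The plan is to establish the cyclic chain of implications
\[
(\text{iv})\Longrightarrow (\text{i})\Longrightarrow (\text{ii})\Longleftrightarrow (\text{iii})\Longrightarrow (\text{iv}).
\]
The implication (iv)$\Rightarrow$(i) is immediate from Definition~\ref{d.cap_density}, and the equivalence (ii)$\Leftrightarrow$(iii) is the deep result \cite[Theorem~6.4]{CILV}, which rests on \cite[Theorem~9.5]{MR4478471}; this is exactly where the completeness and geodesic assumptions on $X$ enter. A basic ingredient throughout is the two-sided comparison
\[
\capm{\beta}{p}{q}\bigl(\overline{B(x,r)},B(x,2r),B(x,\Lambda r)\bigr)\approx r^{-\beta p}\mu(B(x,r)),
\]
valid for $x\in X$, $0<r<\tfrac{1}{80}\diam(X)$, $1\le q\le\infty$, and $\Lambda\ge 41$. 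This is obtained by combining the upper bound of Theorem~\ref{t.cap_balls}(a) for $q=1$ with the lower bound of Theorem~\ref{t.cap_balls}(b) for $q=\infty$ (where connectedness is furnished by the geodesic assumption) and the monotonicity~\eqref{e.cap_Eq_comp} of $\capm{\beta}{p}{q}$ in $q$. Chaining this with Theorems~\ref{t.equiv_intro}, \ref{t.cnr_ML}, and~\ref{t.infinity} also yields $R_{\beta,p}(\overline{B(x,r)})\approx r^{-\beta p}\mu(B(x,r))$.

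For (i)$\Rightarrow$(ii), fix $x\in E$ and $0<r<c_1\diam(E)$. Condition (i) together with the ball lower bound gives
\[
\capm{\beta}{p}{q}(E\cap\overline{B(x,r)},B(x,2r),B(x,\Lambda r))\gtrsim r^{-\beta p}\mu(B(x,r)).
\]
Since $X$ is proper (being complete and doubling), the set $E\cap\overline{B(x,r)}$ is compact, so Corollary~\ref{c.equiv_bp_SEC} yields $R_{\beta,p}(E\cap\overline{B(x,r)})\gtrsim r^{-\beta p}\mu(B(x,r))\approx R_{\beta,p}(\overline{B(x,r)})$. This is (ii) for $0<r<c_1\diam(E)$, and the full range $0<r<\tfrac{1}{8}\diam(E)$ is recovered by a routine doubling argument: for $r$ in the intermediate regime one sets $r'=c_1\diam(E)/2$ and estimates $R_{\beta,p}(E\cap\overline{B(x,r)})\ge R_{\beta,p}(E\cap\overline{B(x,r')})$, while the quantities $r^{-\beta p}\mu(B(x,r))$ and $(r')^{-\beta p}\mu(B(x,r'))$ are comparable on the bounded range of scales involved.

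For (iii)$\Rightarrow$(iv), fix $d\in(0,\beta p)$ as in (iii), set $\eta=d/\beta\in(0,p)$, and pick any $\Lambda\ge 41$, $c_1\le 1/80$, $x\in E$, and $0<r<c_1\diam(E)$. A standard doubling argument, in which any ball of radius larger than $r$ appearing in an almost-optimal $\mathcal{H}^{\mu,d}_{5\Lambda r}$-cover of a set $F\subset\overline{B(x,r)}$ is replaced by the single ball $B(x,r)$ (whose weighted cost $r^{-d}\mu(B(x,r))$ is bounded by $c_\mu^2(5\Lambda)^d$ times the cost of the replaced ball), yields
\[
\mathcal{H}^{\mu,d}_r(F)\le C\,\mathcal{H}^{\mu,d}_{5\Lambda r}(F)\quad\text{for every }F\subset \overline{B(x,r)}.
\]
Applied to $F=E\cap\overline{B(x,r)}$ and combined with (iii) and the standard estimate $\mathcal{H}^{\mu,d}_r(\overline{B(x,r)})\approx r^{-d}\mu(B(x,r))$, this gives $\mathcal{H}^{\mu,d}_{5\Lambda r}(E\cap\overline{B(x,r)})\gtrsim r^{-d}\mu(B(x,r))$. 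Theorem~\ref{t.HC_bp} with this $\eta$ then produces
\[
\capm{\beta}{p}{\infty}(E\cap\overline{B(x,r)},B(x,2r),B(x,\Lambda r))\gtrsim r^{\beta\eta-\beta p}\mathcal{H}^{\mu,d}_{5\Lambda r}(E\cap\overline{B(x,r)})\gtrsim r^{-\beta p}\mu(B(x,r)),
\]
and $\capm{\beta}{p}{q}\ge\capm{\beta}{p}{\infty}$ from~\eqref{e.cap_Eq_comp} together with the upper bound $\capm{\beta}{p}{q}(\overline{B(x,r)},B(x,2r),B(x,\Lambda r))\lesssim r^{-\beta p}\mu(B(x,r))$ completes the proof of (iv).

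The main obstacle is the cited equivalence (ii)$\Leftrightarrow$(iii), which supplies the self-improvement mechanism and for which completeness and the geodesic property of $X$ are essential. The remaining implications are direct combinations of the comparison theorems and capacity-of-balls estimates developed in the previous sections, with only minor technical bookkeeping in adjusting the admissible ranges of the radius $r$ between the four formulations of the density condition.
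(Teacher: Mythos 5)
Your proof is correct and follows essentially the same route as the paper: the cyclic chain of implications with the deep equivalence (ii)$\Leftrightarrow$(iii) cited from \cite[Theorem~6.4]{CILV} (via \cite[Theorem~9.5]{MR4478471}), the two-sided ball-capacity estimates of Theorem~\ref{t.cap_balls}, the Hausdorff-content lower bound of Theorem~\ref{t.HC_bp}, and the comparison of Corollary~\ref{c.equiv_bp_SEC} are all used in the same roles as in the paper's argument. Two small technicalities are worth tightening: in (iii)$\Rightarrow$(iv), replacing all large balls in a cover of $F\subset\overline{B(x,r)}$ by the single \emph{open} ball $B(x,r)$ may fail to cover points of $\overline{B(x,r)}\setminus B(x,r)$, so one should instead use a bounded (doubling-controlled) cover of the closed ball by balls of radius $\le r$, or argue as the paper does by bounding $r^{-d}\mu(B(x,r))$ directly against the weighted cost of any one large ball in the given cover; and in (i)$\Rightarrow$(ii) the fixed scale $r'=c_1\diam(E)/2$ is undefined when $\diam(E)=\infty$ (where no range extension is needed anyway), an issue the paper sidesteps by the substitution $r=8c_1R$.
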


\begin{proof}
Assume first that condition (i) holds, with constants $c_0>0$, $\Lambda \ge 41$, and $c_1\le 1/80$ in~\eqref{e.Hajlaszc capacity density condition}.
Fix $x\in E$ and $0<R<(1/8)\diam(E)$, and write $r=8c_1R < c_1\diam(E)$.
The geodesic space $X$ is in particular connected, and
therefore, by \cite[Lemma~5.5]{CILV}, there is a constant $C_1=C(c_\mu,c_\sigma,\sigma,\beta,p)>0$ such that
\begin{equation}\label{e.r1}
R_{\beta,p}(\overline{B(x,R)})\le C_1R^{-\beta p}\mu(B(x,R))
\le C(C_1,c_1, c_\mu)r^{-\beta p}\mu(B(x,r))\,.
\end{equation}
We use Theorem~\ref{t.cap_balls}\,(b)
and inequality~\eqref{e.cap_Eq_comp} to get
\begin{equation}\label{e.r2}
r^{-\beta p}\mu(B(x,r))
\le C(\beta,p, c_\mu,\Lambda)\capm{\beta}{p}{q} (\overline{B(x,r)},B(x,2r),B(x,\Lambda r))\,.
\end{equation}
By combining \eqref{e.r1} and \eqref{e.r2}, we obtain
\[
R_{\beta,p}(\overline{B(x,R)})
\le C(\beta,p,\sigma,c_\sigma, c_\mu, c_1,\Lambda)\capm{\beta}{p}{q}(\overline{B(x,r)},B(x,2r),B(x,\Lambda r))\,.
\]
Since $E$ satisfies the Haj{\l}asz--Triebel--Lizorkin $(\beta,p,q)$-capacity density condition \eqref{e.Hajlaszc capacity density condition}, 
with the constant $c_0>0$, we see that
\[
R_{\beta,p}(\overline{B(x,R)})
\le C(\beta,p,\sigma,c_\sigma, c_\mu,c_0,c_1,\Lambda)\capm{\beta}{p}{q}(E\cap \overline{B(x,r)},B(x,2r),B(x,\Lambda r))\,.
\]
The closed
and bounded set $E\cap \overline{B(x,r)}$ is compact by \cite[Proposition~3.1]{MR2867756},
and so Corollary~\ref{c.equiv_bp_SEC} implies that
\[
R_{\beta,p}(\overline{B(x,R)})\le C(\beta,p,q,\sigma,c_\sigma, c_\mu,c_0,c_1,\Lambda)R_{\beta,p}(E\cap \overline{B(x,r)})\,.
\]
Clearly $R_{\beta,p}(E\cap \overline{B(x,r)})\le R_{\beta,p}(E\cap \overline{B(x,R)})$, and therefore  (ii) holds.

The implication from (ii) to (iii) follows from \cite[Theorem~6.4]{CILV}
and the implication from (iv) to (i) is a triviality.
Hence, it suffices to show that (iii) implies (iv).
To this end, assume that (iii) holds, with a constant $c_0>0$ in \eqref{e.hausdorff_content_density}.
Let $\Lambda \ge 41$ and $c_1\le 1/80$, and fix $x\in E$ and $0<r<c_1 \diam(E)$.
Theorem~\ref{t.cap_balls}\,(a) %
and inequality \eqref{e.cap_Eq_comp} imply that
\begin{equation}\label{e.b1}
\capm{\beta}{p}{q}(\overline{B(x,r)}, B(x,2r),B(x,\Lambda r))
\le C(c_\mu,\beta,p,\Lambda) r^{-\beta p}\mu(B(x,r))\,.
\end{equation}
On the other hand, by the simple estimate in~\cite[Remark 5.2]{CILV}
and the assumed condition (iii),
we obtain
\begin{equation}\label{e.r_hausd}
r^{-d}\mu(B(x,r)) \le \Ha^{\mu,d}_{r}(\overline{B(x,r)})\le
C(c_0)\mathcal{H}^{\mu,d}_{r}(E\cap \overline{B(x,r)})\,. 
\end{equation}
Let $\{B(x_k,r_k)\}_k$ be a countable cover of $E\cap \overline{B(x,r)}$, with $0<r_k\le 5\Lambda r$
for all $k$. If $0<r_k\le r$ for all $k$, then by Definition~\ref{d.hcc} of the Hausdorff content we have
\begin{equation*}%
\mathcal{H}^{\mu,d}_{r}(E\cap \overline{B(x,r)})\le 
\sum_{k} \mu(B(x_k,r_k))\,r_k^{-d}\,,
\end{equation*}
and thus, by~\eqref{e.r_hausd}, 
\begin{equation}\label{e.r_hausd_all}
r^{-d}\mu(B(x,r)) \le 
C(c_0) \sum_{k} \mu(B(x_k,r_k))\,r_k^{-d}\,.
\end{equation}
On the other hand, if $r<r_k$ for some $k$, then we may assume that $r<r_1\le 5\Lambda r$
and that $B(x_1,r_1)\cap \overline{B(x,r)}\neq \emptyset$. 
In this case we obtain, using also the doubling condition~\eqref{e.doubling}, that
\begin{equation}\label{e.r_hausd_one}
\begin{split}
r^{-d}\mu(B(x,r)) & \le C(d,\Lambda)r_1^{-d}\mu(B(x_1,3r_1))
\\&\le C(d,c_\mu,\Lambda)\mu(B(x_1,r_1))\,r_1^{-d}
 \le C(d,c_\mu,\Lambda)\sum_{k} \mu(B(x_k,r_k))\,r_k^{-d}\,.
\end{split}
\end{equation}
By taking infimum over all such covers of $E\cap \overline{B(x,r)}$, we conclude from
\eqref{e.r_hausd_all} and \eqref{e.r_hausd_one} that
\begin{equation}\label{e.r_hausd_bound}
r^{-d}\mu(B(x,r)) 
\le C(c_0,d,c_\mu,\Lambda)\mathcal{H}^{\mu,d}_{5\Lambda r}(E\cap \overline{B(x,r)})\,.
\end{equation}

Since $X$ is connected, the measure $\mu$ satisfies the reverse doubling condition \eqref{e.rev_dbl_decay} for
a constant $0<c_R=C(c_\mu)<1$. 
By using \eqref{e.r_hausd_bound}, Theorem~\ref{t.HC_bp} with $\eta=d/\beta$, 
and inequality \eqref{e.cap_Eq_comp}, we get
\begin{equation}\label{e.b2}
r^{-d}\mu(B(x,r)) \le C(\beta,p,d,c_\mu,c_0,\Lambda) r^{\beta p-d}\capm{\beta}{p}{q}(E\cap \overline{B(x,r)},B(x,2r),B(x,\Lambda r))\,.
\end{equation}
Combining \eqref{e.b1} and \eqref{e.b2} gives
\[
\capm{\beta}{p}{q}(\overline{B(x,r)}, B(x,2r),B(x,\Lambda r))\le
C \capm{\beta}{p}{q}(E\cap \overline{B(x,r)},B(x,2r),B(x,\Lambda r))\,,
\]
where $C=C(\beta,p,d,c_\mu,c_0,\Lambda)$. 
This proves condition (iv), and the proof of the theorem is complete.
\end{proof}

As a corollary of Theorem \ref{t.Riesz_and_Hajlasz},
we obtain for the Haj{\l}asz--Triebel--Lizorkin capacity density condition 
the following self-improvement property,
which is new even in the Euclidean spaces. Similar self-improvement results for
other fractional capacities have been obtained in~\cite{MR946438,CILV,MR4478471}.

\begin{corollary}\label{c.riesz-improvement}
Let $1<p<\infty$, $1<q,\tau\le \infty$, and $0<\beta< 1$.
Assume that $X$ is a complete geodesic metric space and 
that $\mu$ satisfies the quantitative reverse doubling condition \eqref{e.reverse_doubling}
for some exponent $\sigma>\beta p$.
Assume that a closed set $E\subset X$ satisfies the Haj{\l}asz--Triebel--Lizorkin $(\beta,p,q)$-capacity density condition \eqref{e.Hajlaszc capacity density condition}, 
with some constants $\Lambda \ge 41$ and $c_1\le 1/80$.
Then there exists $0<\delta<\min\{\beta,p-1\}$ such that 
$E$ satisfies the Haj{\l}asz--Triebel--Lizorkin $(\gamma,s,\tau)$-capacity density condition \eqref{e.Hajlaszc capacity density condition},
for any $\Lambda \ge 41$ and $c_1\le 1/80$, whenever  
\[
\beta-\delta<\gamma<1\,,\quad   p-\delta<s<\infty\quad \text{   and   }\quad  \sigma > \gamma s\,.
\]
\end{corollary}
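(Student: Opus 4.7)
The plan is to use Theorem~\ref{t.Riesz_and_Hajlasz} twice, exploiting the fact that the Hausdorff content density condition \eqref{e.hausdorff_content_density} is a parameter-free property of $E$ (it depends only on the codimension $d$ and not on $\beta$, $p$, $q$, or $\tau$). First I would apply the implication (i)$\Rightarrow$(iii) in Theorem~\ref{t.Riesz_and_Hajlasz} to the given Haj{\l}asz--Triebel--Lizorkin $(\beta,p,q)$-capacity density condition to extract some codimension $d$, with $0<d<\beta p$, for which $E$ satisfies the Hausdorff content density condition \eqref{e.hausdorff_content_density}. Note that the hypotheses of Theorem~\ref{t.Riesz_and_Hajlasz} are in force: $X$ is complete and geodesic, and $\mu$ satisfies \eqref{e.reverse_doubling} with $\sigma>\beta p$.

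Next I would choose $\delta>0$ small enough so that
\[
0<\delta<\min\{\beta,p-1\}\qquad\text{and}\qquad (\beta-\delta)(p-\delta)>d.
\]
The first condition guarantees that $\gamma>\beta-\delta>0$ and $s>p-\delta>1$ for the perturbed parameters, which are the basic requirements needed to invoke the theory of Haj{\l}asz--Triebel--Lizorkin spaces. The second condition is possible since $(\beta-\delta)(p-\delta)\to\beta p$ as $\delta\to 0^+$ and $\beta p>d$; it forces $\gamma s>(\beta-\delta)(p-\delta)>d$, i.e.\ $0<d<\gamma s$, for every admissible $\gamma\in(\beta-\delta,1)$ and $s\in(p-\delta,\infty)$.

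Finally, fix any $\gamma\in(\beta-\delta,1)$, $s\in(p-\delta,\infty)$, $\tau\in(1,\infty]$, and $\Lambda\ge 41$, $c_1\le 1/80$, with $\sigma>\gamma s$. The hypotheses of Theorem~\ref{t.Riesz_and_Hajlasz} are then satisfied for the parameter triple $(\gamma,s,\tau)$: completeness and the geodesic property of $X$ are intrinsic, the quantitative reverse doubling condition \eqref{e.reverse_doubling} holds with the required exponent $\sigma>\gamma s$, and the ranges $0<\gamma<1$, $1<s<\infty$, $1<\tau\le\infty$ are built into the choice of $\delta$. Since $E$ already satisfies condition (iii) of Theorem~\ref{t.Riesz_and_Hajlasz} relative to the new triple (the same $d$ works because $0<d<\gamma s$), the implication (iii)$\Rightarrow$(iv) yields that $E$ satisfies the Haj{\l}asz--Triebel--Lizorkin $(\gamma,s,\tau)$-capacity density condition \eqref{e.Hajlaszc capacity density condition} for every $\Lambda\ge 41$ and $c_1\le 1/80$.

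I do not expect any genuine obstacle in this argument: the entire content of the corollary has been packed into Theorem~\ref{t.Riesz_and_Hajlasz} and into the self-improvement of the Hausdorff content density condition that is implicit in the statement of part (iii) (via \cite[Theorem~6.4]{CILV} and \cite[Theorem~9.5]{MR4478471}). The role of the present proof is just the elementary bookkeeping of perturbing $(\beta,p)$ to $(\gamma,s)$ in such a way that the codimension $d$ supplied by the forward direction of the characterization remains admissible for the backward direction at the perturbed parameters.
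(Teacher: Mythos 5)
Your proposal is correct and follows essentially the same route as the paper: pass from the $(\beta,p,q)$-capacity density condition to a Hausdorff content density condition of some codimension $d<\beta p$ via Theorem~\ref{t.Riesz_and_Hajlasz}, use the strict inequality to pick $\delta$ small enough that $(\beta-\delta)(p-\delta)>d$, and then apply the reverse implication of the same theorem at the perturbed parameters $(\gamma,s,\tau)$. The paper's only extra remark is to call the gap $d<\beta p$ ``open-ended''; your bookkeeping with $\delta$ is exactly what the paper does.
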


\begin{proof}
Theorem~\ref{t.Riesz_and_Hajlasz} implies
that $E$ satisfies the Hausdorff content density condition \eqref{e.hausdorff_content_density} 
for some $0 < d < \beta p$. Notice that this condition is open-ended, so it allows  
us to choose %
$0<\delta<\min\{\beta,p-1\}$  such that $d<(\beta-\delta)(p-\delta)$.
Let $\gamma$ and $s$ be such that 
\[
0<\beta-\delta<\gamma<1\,,\quad 1<p-\delta<s<\infty\quad \text{ and }\quad 
\sigma>\gamma s\,.
\]   
Then $0<d<\gamma s$, and since 
$E$ satisfies the Hausdorff content density condition \eqref{e.hausdorff_content_density} 
of codimension $d$, 
Theorem~\ref{t.Riesz_and_Hajlasz} implies
that $E$ satisfies the Haj{\l}asz--Triebel--Lizorkin $(\gamma,s,\tau)$-capacity 
density condition \eqref{e.Hajlaszc capacity density condition}, 
for all constants $\Lambda \ge 41$ and $c_1\le 1/80$.
\end{proof}

\bibliographystyle{abbrv}

\begin{thebibliography}{10}

\bibitem{MR2747071}
D.~Aalto and J.~Kinnunen.
\newblock The discrete maximal operator in metric spaces.
\newblock {\em J. Anal. Math.}, 111:369--390, 2010.

\bibitem{MR1097178}
D.~R. Adams.
\newblock The classification problem for the capacities associated with the
  {B}esov and {T}riebel-{L}izorkin spaces.
\newblock In {\em Approximation and function spaces ({W}arsaw, 1986)},
  volume~22 of {\em Banach Center Publ.}, pages 9--24. PWN, Warsaw, 1989.

\bibitem{MR1411441}
D.~R. Adams and L.~I. Hedberg.
\newblock {\em Function spaces and potential theory}, volume 314 of {\em
  Grundlehren der mathematischen Wissenschaften [Fundamental Principles of
  Mathematical Sciences]}.
\newblock Springer-Verlag, Berlin, 1996.

\bibitem{MR2867756}
A.~Bj{\"o}rn and J.~Bj{\"o}rn.
\newblock {\em Nonlinear potential theory on metric spaces}, volume~17 of {\em
  EMS Tracts in Mathematics}.
\newblock European Mathematical Society (EMS), Z\"urich, 2011.

\bibitem{MR1726779}
N.~Bourbaki.
\newblock {\em General topology. {C}hapters 1--4}.
\newblock Elements of Mathematics (Berlin). Springer-Verlag, Berlin, 1998.
\newblock Translated from the French, Reprint of the 1989 English translation.

\bibitem{CILV}
J.~Canto, L.~Ihnatsyeva, J.~Lehrb\"ack, and A.~V. V\"ah\"akangas.
\newblock {C}apacities and density conditions in metric spaces.
\newblock {\em arXiv:2208.14732}, 2022.

\bibitem{MR4478471}
J.~Canto and A.~V. V\"{a}h\"{a}kangas.
\newblock The {H}aj\l asz capacity density condition is self-improving.
\newblock {\em J. Geom. Anal.}, 32(11):Paper No. 276, 42, 2022.

\bibitem{MR3900847}
B.~Dyda, L.~Ihnatsyeva, J.~Lehrb\"{a}ck, H.~Tuominen, and A.~V.
  V\"{a}h\"{a}kangas.
\newblock Muckenhoupt {$A_p$}-properties of distance functions and applications
  to {H}ardy-{S}obolev--type inequalities.
\newblock {\em Potential Anal.}, 50(1):83--105, 2019.

\bibitem{MR4649157}
B.~Dyda, J.~Lehrb\"{a}ck, and A.~V. V\"{a}h\"{a}kangas.
\newblock Fractional {P}oincar\'{e} and localized {H}ardy inequalities on
  metric spaces.
\newblock {\em Adv. Calc. Var.}, 16(4):867--884, 2023.

\bibitem{MR3331699}
B.~Dyda and A.~V. V\"{a}h\"{a}kangas.
\newblock Characterizations for fractional {H}ardy inequality.
\newblock {\em Adv. Calc. Var.}, 8(2):173--182, 2015.

\bibitem{MR2542655}
L.~Grafakos, L.~Liu, and D.~Yang.
\newblock Vector-valued singular integrals and maximal functions on spaces of
  homogeneous type.
\newblock {\em Math. Scand.}, 104(2):296--310, 2009.

\bibitem{MR1683160}
P.~Haj{\l}asz and P.~Koskela.
\newblock Sobolev met {P}oincar\'e.
\newblock {\em Mem. Amer. Math. Soc.}, 145(688):x+101, 2000.

\bibitem{MR3471303}
T.~Heikkinen, L.~Ihnatsyeva, and H.~Tuominen.
\newblock Measure density and extension of {B}esov and {T}riebel-{L}izorkin
  functions.
\newblock {\em J. Fourier Anal. Appl.}, 22(2):334--382, 2016.

\bibitem{MR3102566}
T.~Heikkinen, J.~Kinnunen, J.~Nuutinen, and H.~Tuominen.
\newblock Mapping properties of the discrete fractional maximal operator in
  metric measure spaces.
\newblock {\em Kyoto J. Math.}, 53(3):693--712, 2013.

\bibitem{MR3605979}
T.~Heikkinen, P.~Koskela, and H.~Tuominen.
\newblock Approximation and quasicontinuity of {B}esov and {T}riebel-{L}izorkin
  functions.
\newblock {\em Trans. Amer. Math. Soc.}, 369(5):3547--3573, 2017.

\bibitem{MR1800917}
J.~Heinonen.
\newblock {\em Lectures on analysis on metric spaces}.
\newblock Universitext. Springer-Verlag, New York, 2001.

\bibitem{MR3363168}
J.~Heinonen, P.~Koskela, N.~Shanmugalingam, and J.~T. Tyson.
\newblock {\em Sobolev spaces on metric measure spaces, An approach based on
  upper gradients}, volume~27 of {\em New Mathematical Monographs}.
\newblock Cambridge University Press, Cambridge, 2015.

\bibitem{MR551496}
E.~Hewitt and K.~A. Ross.
\newblock {\em Abstract harmonic analysis. {V}ol. {I}}, volume 115 of {\em
  Grundlehren der Mathematischen Wissenschaften [Fundamental Principles of
  Mathematical Sciences]}.
\newblock Springer-Verlag, Berlin-New York, second edition, 1979.
\newblock Structure of topological groups, integration theory, group
  representations.

\bibitem{MR3825765}
B.~Iaffei and L.~Nitti.
\newblock A unified point of view on boundedness of {R}iesz type potentials.
\newblock {\em Rev. Un. Mat. Argentina}, 59(1):99--121, 2018.

\bibitem{MR1066471}
B.~Jawerth, C.~Perez, and G.~Welland.
\newblock The positive cone in {T}riebel-{L}izorkin spaces and the relation
  among potential and maximal operators.
\newblock In {\em Harmonic analysis and partial differential equations ({B}oca
  {R}aton, {FL}, 1988)}, volume 107 of {\em Contemp. Math.}, pages 71--91.
  Amer. Math. Soc., Providence, RI, 1990.

\bibitem{MR4104350}
N.~Karak.
\newblock Triebel-{L}izorkin capacity and {H}ausdorff measure in metric spaces.
\newblock {\em Math. Slovaca}, 70(3):617--624, 2020.

\bibitem{KM}
N.~Karak and D.~Mondal.
\newblock {C}apacity in {B}esov and {T}riebel--{L}izorkin spaces with
  generalized smoothness.
\newblock {\em Georgian Mathematical Journal (to appear)}, 2024.

\bibitem{MR1954868}
J.~Kinnunen and V.~Latvala.
\newblock Lebesgue points for {S}obolev functions on metric spaces.
\newblock {\em Rev. Mat. Iberoamericana}, 18(3):685--700, 2002.

\bibitem{MR2764899}
P.~Koskela, D.~Yang, and Y.~Zhou.
\newblock Pointwise characterizations of {B}esov and {T}riebel-{L}izorkin
  spaces and quasiconformal mappings.
\newblock {\em Adv. Math.}, 226(4):3579--3621, 2011.

\bibitem{MR0350027}
N.~S. Landkof.
\newblock {\em Foundations of modern potential theory}.
\newblock Die Grundlehren der mathematischen Wissenschaften, Band 180.
  Springer-Verlag, New York-Heidelberg, 1972.
\newblock Translated from the Russian by A. P. Doohovskoy.

\bibitem{MR946438}
J.~L. Lewis.
\newblock Uniformly fat sets.
\newblock {\em Trans. Amer. Math. Soc.}, 308(1):177--196, 1988.

\bibitem{math9212724}
Z.~Li, D.~Yang, and W.~Yuan.
\newblock {L}ebesgue {P}oints of {B}esov and {T}riebel--{L}izorkin {S}paces
  with {G}eneralized {S}moothness.
\newblock {\em Mathematics}, 9(21), 2021.

\bibitem{MR3605166}
H.~Luiro and A.~V. V\"{a}h\"{a}kangas.
\newblock Beyond local maximal operators.
\newblock {\em Potential Anal.}, 46(2):201--226, 2017.

\bibitem{Mazya2011}
V.~G. Maz{\cprime}ya.
\newblock {\em Sobolev spaces with applications to elliptic partial
  differential equations}, volume 342 of {\em Grundlehren der Mathematischen
  Wissenschaften [Fundamental Principles of Mathematical Sciences]}.
\newblock Springer, Heidelberg, augmented edition, 2011.

\bibitem{MR277741}
N.~G. Meyers.
\newblock A theory of capacities for potentials of functions in {L}ebesgue
  classes.
\newblock {\em Math. Scand.}, 26:255--292 (1971), 1970.

\bibitem{MR340523}
B.~Muckenhoupt and R.~Wheeden.
\newblock Weighted norm inequalities for fractional integrals.
\newblock {\em Trans. Amer. Math. Soc.}, 192:261--274, 1974.

\bibitem{Nuutinen2015TheRC}
J.~Nuutinen and P.~Silvestre.
\newblock {T}he {R}iesz {C}apacity in {M}etric {S}paces.
\newblock arXiv:1501.05746v2, 2015.

\bibitem{MR1962949}
C.~P\'{e}rez and R.~L. Wheeden.
\newblock Potential operators, maximal functions, and generalizations of
  {$A_\infty$}.
\newblock {\em Potential Anal.}, 19(1):1--33, 2003.

\bibitem{MR924157}
W.~Rudin.
\newblock {\em Real and complex analysis}.
\newblock McGraw-Hill Book Co., New York, third edition, 1987.

\bibitem{MR1774162}
B.~O. Turesson.
\newblock {\em Nonlinear potential theory and weighted {S}obolev spaces},
  volume 1736 of {\em Lecture Notes in Mathematics}.
\newblock Springer-Verlag, Berlin, 2000.

\end{thebibliography}

\def\cprime{$'$} \def\cprime{$'$} \def\cprime{$'$}

\setlength{\parindent}{0pt}

\end{document}